\documentclass[a4paper,11pt]{article}
\usepackage{amsfonts,amsmath,amsthm,amssymb,hyperref,tikz}
\usepackage{geometry}
\usepackage{thmtools}
\usepackage{array}
\usepackage[utf8]{inputenc}
\usepackage[all,2cell]{xy}
\usepackage{bbm}
\usepackage{marginnote}
\usetikzlibrary{calc,intersections,arrows,decorations.markings}

%%%%%%%%%authornote
%MM: Was trying to create authornote that would appear on left margin, to be able to use more margin space

%\newcommand{\marco}[1]{}
%\newcommand{\travis}[1]{}

\newcommand{\Id}{\mathbf{1}}
\newcommand{\ol}{\overline}
%%%%%%%%%%%

%\usepackage{authblk}

\title{}
%\author[1]{M. Gualtieri}
%\author[1]{M. Matviichuk}
%\author[1]{G. Scott}
%\affil[1]{University of Toronto}
\date{}
\author{M. Gualtieri\thanks{University of Toronto} \and M. Matviichuk\footnotemark[1] \and G. Scott\footnotemark[1]}

\pdfinfo{%
  /Title    ()
  /Author   ()
  /Creator  ()
  /Producer ()
  /Subject  ()
  /Keywords ()
}

\newtheorem{theorem}{Theorem}[section]
\newtheorem*{theorem*}{Theorem}
\newtheorem{lemma}[theorem]{Lemma}
\newtheorem{corollary}[theorem]{Corollary}
\theoremstyle{definition}
\newtheorem{remark}[theorem]{Remark}
\newtheorem{example}[theorem]{Example}
\newcommand{\del}{{\partial}}
\newcommand{\ZZ}{\mathbb{Z}}
\newcommand{\eps}{\varepsilon}
\newcommand{\delbar}{\overline{\partial}}
\newcommand{\wt}{\widetilde}
\newcommand{\TT}{\mathbb{T}}

\newcommand\restr[2]{{
  \left.\kern-\nulldelimiterspace 
  #1 
  \vphantom{\big|} 
  \right|_{#2} 
  }}

\makeatletter
\newcommand{\tpitchfork}{%
  \vbox{
    \baselineskip\z@skip
    \lineskip-.52ex
    \lineskiplimit\maxdimen
    \m@th
    \ialign{##\crcr\hidewidth\smash{$-$}\hidewidth\crcr$\pitchfork$\crcr}
  }%
}
\makeatother

\makeatletter
\renewcommand\tableofcontents{%
    \@starttoc{toc}%
}
\makeatother
	
\theoremstyle{definition}
\newtheorem{definition}[theorem]{Definition}

\title{Deformation of Dirac structures via $L_\infty$ algebras}
\begin{document}
\renewcommand{\abstractname}{\vspace{-\baselineskip}}
\maketitle

\begin{abstract}
The deformation theory of a Dirac structure is controlled by a differential graded Lie algebra which depends on the choice of an auxiliary transversal Dirac structure; if the transversal is not involutive, one obtains an $L_\infty$ algebra instead.  We develop a simplified method for describing this $L_\infty$ algebra and use it to prove that the $L_\infty$ algebras corresponding to different transversals are canonically $L_\infty$--isomorphic.  In some cases, this isomorphism provides a formality map, as we show in several examples including (quasi)-Poisson geometry, Dirac structures on Lie groups, and Lie bialgebras.  Finally, we apply our result to a classical problem in the deformation theory of complex manifolds: we provide explicit formulas for the Kodaira-Spencer deformation complex of a fixed small deformation of a complex manifold, in terms of the deformation complex of the original manifold.
\end{abstract}

% \tableofcontents
\section{Introduction}
Deformation theory is ordinarily treated using the language of differential graded Lie algebras (DGLA, for short), 
with small deformations given by the solutions to the Maurer-Cartan equation. 
However, for certain deformation problems it is necessary to use $L_\infty$ algebras instead, in which the Lie bracket on the differential complex is required to satisfy the Lie axioms only in cohomology, and its failure to be Lie on cochains is measured by a sequence of brackets of higher arity.  One of the main advantages of the $L_\infty$ formalism is that DGLAs may be equivalent as $L_\infty$ algebras without being isomorphic as DGLAs. Nevertheless, equivalent $L_\infty$ algebras give rise to equivalent moduli problems.  This leads to striking ``formality'' results, where one observes that a nontrivial DGLA is formal, in the sense of being $L_\infty$-equivalent to a DGLA with vanishing bracket, leading to the unobstructedness of the original moduli problem.

A notable example is Tian-Todorov's proof of unobstructedness of deformations for Calabi-Yau manifolds \cite{Ti87,Tod89}.
While the original proof proceeds by directly solving the Maurer-Cartan equation,
it was noticed later that the theorem follows from the $L_\infty$-formality of the  Kodaira-Spencer DGLA \cite{GoMi90}.
Another example is Hitchin's proof of unobstructedness of deformations for holomorphic Poisson manifolds~\cite{Hi12}, which was later seen to follow from the $L_\infty$-formality of 
the Koszul DGLA on the de Rham forms of a Poisson manifold \cite{FM12}.

In this paper, we focus on the deformation theory of a Dirac structure $M$, which is an involutive Lagrangian subbundle of a Courant algebroid; Dirac structures encompass a great many types of geometric structure including symplectic, Poisson, complex and generalized complex geometry.  Deformations of Dirac structures were first studied by Liu, Weinstein, and Xu, who observed that upon choosing a Dirac structure $L$ transverse to $M$, one obtains a Lie bracket $[\cdot,\cdot]_L$ on the de Rham complex of $M$ which makes it into a DGLA 
$(\Omega_M[1], d_M, [\cdot,\cdot]_L)$.  A small deformation of $M$ is then given by a solution to the Maurer-Cartan equation~\cite[Theorem 6.1]{LWX}:
\begin{equation}\label{eq:MC_equation_from_LWX}
d_M\omega + \frac{1}{2}[\omega,\omega]_L=0.
\end{equation}
Our main result addresses the nontrivial dependence of the above DGLA structure on the choice of Dirac transversal $L$:
\begin{theorem*}
Let $M$ be a Dirac structure in an exact Courant algebroid, and let $L$ and $L'$ be  Dirac structures transverse to $M$. 
Then there exists a canonical $L_\infty$-isomorphism between the differential graded Lie algebras $(\Omega_M[1], d_M, [\cdot,\cdot]_L)$ and $(\Omega_{M}[1], d_{M}, [\cdot,\cdot]_{L'})$.
%If $\varepsilon$ is the difference $L'$ $L$, and $\omega$ is a Maurer-Cartan element in $\Omega_M[1]$, then the $L_\infty$ map takes $\omega$ to 
% $B =  \sum_{n=0}^\infty \omega (\varepsilon \omega)^n$; for $\omega$ small enough the series converges to $B = \omega (Id-\varepsilon \omega)^{-1}$.
\end{theorem*}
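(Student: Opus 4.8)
The plan is to reduce the statement to a single explicit identity between codifferentials and then exponentiate it. First I would fix a convenient model for the $L_\infty$ algebra attached to a transversal, writing $\mathcal{L}_L$ for the $L_\infty$ structure carried by $\Omega_M[1]$. The point to extract is that its only possibly nonzero brackets are the unary bracket $d_M$, which is the Lie algebroid differential of $M$ and depends on neither $L$ nor the Courant bracket; the binary bracket $[\cdot,\cdot]_L$ of Liu--Weinstein--Xu; and a ternary bracket equal to contraction with a trivector $\phi_L\in\Gamma(\wedge^3 M)$ measuring the failure of $L$ to be involutive, so that $\mathcal{L}_L$ is a DGLA precisely when $L$ is Dirac. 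I would then record that the set of Lagrangian complements of $M$ in $E$ is an affine space modelled on $\Gamma(\wedge^2 M)$: isotropy forces the difference of two complements $L$ and $L'$ to be a skew element of $\operatorname{Hom}(L,M)\cong M\otimes M$, so there is a unique bivector $\beta\in\Gamma(\wedge^2 M)$ with $L'=e^{\beta}(L)$, where $e^{\beta}\in O(E)$ is the orthogonal shear fixing $M$ pointwise and sending $\xi\in L\cong M^*$ to $\xi+\iota_\xi\beta$. It therefore suffices to produce, from $\beta$ alone, a canonical $L_\infty$-isomorphism $\mathcal{L}_L\to\mathcal{L}_{L'}$.

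Contraction with a bivector lowers form-degree by $2$, which is exactly the degree of a degree-$0$ coderivation of the cofree cocommutative coalgebra $S^c(\Omega_M[2])$ underlying the $L_\infty$ formalism; let $\widehat\beta$ be the coderivation whose corestriction is concentrated in arity two and there equals $(\omega_1,\omega_2)\mapsto\iota_\beta(\omega_1\wedge\omega_2)$. Then $e^{\widehat\beta}$ is a coalgebra automorphism, i.e.\ an $L_\infty$-automorphism of the graded space $\Omega_M[1]$ with linear term the identity and with higher Taylor coefficients the iterated contractions $\iota_\beta^{\,k-1}(\omega_1\wedge\cdots\wedge\omega_k)$ up to combinatorial constants. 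The crux of the proof is the identity
\[
Q_{L'}\;=\;e^{\operatorname{ad}\widehat\beta}\,Q_{L}
\]
between the codifferentials of $\mathcal{L}_{L'}$ and $\mathcal{L}_L$, which I expect to hold for every $\beta\in\Gamma(\wedge^2 M)$ with $L'=e^\beta L$, and not only when $L$ and $L'$ are Dirac. Granting it, $e^{\widehat\beta}$ is by definition an $L_\infty$-isomorphism $\mathcal{L}_L\to\mathcal{L}_{L'}$; it is canonical because $\beta$ is determined by $L$ and $L'$, and since $\beta\mapsto e^\beta$ is a homomorphism of abelian groups the resulting system of isomorphisms is compatible with composition over three transversals.

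To establish the identity I would expand $e^{\operatorname{ad}\widehat\beta}Q_L=Q_L+[\widehat\beta,Q_L]+\tfrac12[\widehat\beta,[\widehat\beta,Q_L]]+\cdots$ and compare arities. Since $\widehat\beta$ has arity two, the arity-one part is untouched, so the differential of $\mathcal{L}_{L'}$ is again $d_M$, as it must be. The arity-two part $[\cdot,\cdot]_L+[\widehat\beta,d_M]$ must be identified with $[\cdot,\cdot]_{L'}$; this is where the behaviour of the Dorfman bracket under the orthogonal shear $e^\beta$ enters, and it reduces to a Cartan-type commutation relation between $\iota_\beta$ and $d_M$ rewritten through the Liu--Weinstein--Xu bracket. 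The arity-three part, produced by $[\widehat\beta,[\cdot,\cdot]_L]$ and $\tfrac12[\widehat\beta,[\widehat\beta,d_M]]$, must be identified with contraction against $\phi_{L'}$; when $L$ and $L'$ are both Dirac it has to vanish, which is exactly the twisted-Poisson-type equation that involutivity of $e^\beta L$ places on $\beta$. The genuine obstacle is the arity $\ge 4$ part: it must cancel identically, and verifying this is a delicate but purely combinatorial computation with nested $\beta$-contractions and $d_M$, organized by the Jacobi and Leibniz identities of the model for $\mathcal{L}_L$ from the first step.

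Two variants of the same argument are worth keeping in reserve in case the direct combinatorics prove unwieldy. One can connect $L$ to $L'$ by the affine segment $L_t:=e^{t\beta}(L)$, obtain a smooth path of $L_\infty$ structures $Q_t$ on the fixed complex $(\Omega_M[1],d_M)$, verify the flow equation $\tfrac{d}{dt}Q_t=[\widehat\beta,Q_t]$, and integrate it, exhibiting the isomorphism as the time-one flow of $\widehat\beta$ at the cost of passing through the intermediate, generically non-DGLA, structures $\mathcal{L}_{L_t}$. Alternatively, realizing the exact Courant algebroid as a degree-$2$ symplectic $Q$-manifold, the two transversals correspond to Lagrangian tubular-neighbourhood retractions onto $M[1]$, the bivector $\beta$ becomes a degree-$2$ Hamiltonian interpolating between them, and $e^{\widehat\beta}$ is the induced comparison of the two derived-bracket $L_\infty$ structures; this route makes canonicity transparent but trades the hands-on computation for the graded-geometry dictionary.
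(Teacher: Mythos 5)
Your overall architecture matches the paper's: write $L'=e^{\beta}L$ for a unique $\beta\in\Gamma(\wedge^2M)$, build from $\beta$ an arity-two coderivation of $S(\Omega_M[2])$, and exponentiate it. However, the coderivation you propose to exponentiate is not the right one, and the failure is already visible at arity two. You take $\widehat\beta$ to be induced by $(\omega_1,\omega_2)\mapsto\iota_\beta(\omega_1\wedge\omega_2)$, i.e.\ $\iota_\beta\circ\mu$. The correct generator is the graded commutator $R_\beta=[\iota_\beta,\mu]$, whose arity-two component is the \emph{second-order deviation} $\iota_\beta(\omega_1\wedge\omega_2)-(\iota_\beta\omega_1)\wedge\omega_2-\omega_1\wedge(\iota_\beta\omega_2)$ (up to sign). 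The difference is not cosmetic: with your choice, the arity-two component of $[\widehat\beta,d_M]$ is $(\omega_1,\omega_2)\mapsto[d_M,\iota_\beta](\omega_1\wedge\omega_2)$, whereas the actual difference of brackets $[\cdot,\cdot]_{L'}-[\cdot,\cdot]_{L}$ is the Koszul-type bracket \emph{generated} by the second-order operator $[d_M,\iota_\beta]$, i.e.\ with the first-order terms $([d_M,\iota_\beta]\omega_1)\wedge\omega_2$ and $\omega_1\wedge([d_M,\iota_\beta]\omega_2)$ subtracted off; these terms do not vanish in general. A concrete test is $M=TX$, $L=T^*X$, $L'=\mathrm{graph}(\pi)$ for a Poisson bivector $\pi$: the target bracket is the Koszul bracket $\pm(\mathcal{L}_\pi(x\wedge y)-\mathcal{L}_\pi x\wedge y)-x\wedge\mathcal{L}_\pi y$, not $\mathcal{L}_\pi(x\wedge y)$. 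The same misidentification propagates to your higher Taylor coefficients (the components of $e^{R_\beta}$ are sums over chains $\omega_{\sigma(1)}\beta\,\omega_{\sigma(2)}\beta\cdots\beta\,\omega_{\sigma(k)}$, not iterated full contractions $\iota_\beta^{\,k-1}(\omega_1\wedge\cdots\wedge\omega_k)$) and to the action on Maurer--Cartan elements, where your map would produce the spurious term $2(\iota_\beta\omega)\,\omega$ and fail to reproduce the geometric series $\omega(\mathbbm{1}-\beta\omega)^{-1}$ that the comparison of graphs forces.

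Separately, the arity $\geq 4$ cancellation that you defer as ``a delicate but purely combinatorial computation'' is the entire content of the theorem, and no mechanism is offered for it. The paper's way around this is structural: every bracket is realized as a derived bracket of a graded component of $d_H$ acting on the spinor module $\Omega$ viewed as a module over $\Omega_M\cong\mathcal{X}_L$ (the ``$BV_\infty$ torsor''), the order of each component as a differential operator is tied to its degree, and the full conjugation identity $L_\infty(e^{-\beta}d_He^{\beta})=e^{-R_\beta}L_\infty(d_H)e^{R_\beta}$ is reduced to the single infinitesimal identity $L_\infty([d_H,\beta])=[L_\infty(d_H),R_\beta]$. Your ``flow equation'' variant $\tfrac{d}{dt}Q_t=[\widehat\beta,Q_t]$ is precisely this infinitesimal statement, and is the route you should take --- but with the corrected generator $[\iota_\beta,\mu]$, and with an argument (such as the order-counting above) for why no terms of arity $\geq 4$ can appear.
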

The construction of the $L_\infty$-isomorphism is given explicitly in Theorem~\ref{Thm:derived_bracket_construction_preserves_gauge_action},
yielding the above result in Corollary~\ref{cor:independence_of_L_infty_from_the_choice_of_complement}.
% and Section~\eqref{section:proof} for details)
We also apply the $L_\infty$-isomorphism to Maurer-Cartan elements and verify the convergence of the resulting series in Theorem~\ref{mccomp}.

In general, the choice of a transverse Dirac structure may not be possible; in this case we simply choose an almost Dirac structure, i.e. a Lagrangian subbundle $L$ which may not be involutive.  With this choice, it was observed in \cite{Roy1}, \cite{KeWa} and \cite{FrZa} that the deformations of $M$ are controlled not by a DGLA but by a cubic $L_\infty$ algebra $(\Omega_M[1], d_M, [\cdot,\cdot]_L, [\cdot,\cdot,\cdot]_L)$; the Maurer-Cartan equation \eqref{eq:MC_equation_from_LWX} then acquires an extra cubic term (cf. \cite[(5.2)]{Roy1}).  Our result is actually stated in this more general situation: there is a canonical $L_\infty$-isomorphism relating the cubic $L_\infty$ structures associated to any pair of transversal almost Dirac structures.  We also drop the integrability assumption on $M$ itself, and our results still hold for what are known as ``curved'' $L_\infty$ algebras, which have an additional bracket of arity $0$ (see Theorem \ref{quantized_derived_brackets_integrability}, which includes the curved case).

The $L_\infty$ algebra associated to the pair $(M,L)$ was constructed in \cite[Lemma 2.6]{FrZa} using Voronov's derived bracket construction \cite{Vor} and Roytenberg's interpretation of a Courant algebroid 
as a graded symplectic manifold equipped with a cubic Hamiltonian function~\cite{Roy}. 
For exact Courant algebroids, we give a significantly simpler description of the $L_\infty$ algebra on the de Rham complex of $M$.   
Our approach involves studying the Dirac structures in terms of their corresponding pure spinors for the Clifford algebra of the ambient Courant algebroid.  We use a version of 
the $BV_\infty$ formalism \cite{BaVo}, which we view as a quantization of the derived bracket construction mentioned above.  This new method for computing the $L_\infty$ structures in question allows us to easily construct the required $L_\infty$-isomorphisms in our main Theorem.

The paper is organized as follows. In Section \ref{section:preliminaries} we give the necessary definitions and the main motivating idea. 
In Section \ref{section:proof} we present our method of constructing the $L_\infty$ algebras mentioned above and prove the main result. 
In Section \ref{section:action} we describe explicitly how the solutions of the Maurer-Cartan equation \eqref{eq:MC_equation_from_LWX} 
transform under the established $L_\infty$-isomorphism. 
In Section \ref{section:examples} we give several applications of our results.

One intriguing application is to the classical theory of deformations of complex manifolds. In Section~\ref{KSTH}, we explain how to use our main result to express the Kodaira-Spencer DGLA of a deformed complex manifold in terms of the DGLA of the original complex manifold. In effect, we describe the Kodaira-Spencer deformation complex of a deformation of a complex manifold. \\

\noindent \textit{Acknowledgements:} We thank Domenico Fiorenza and Yael Fr\'egier for discussions on the topics related to the subject of this paper.  We also thank Florian Sch\"atz and Marco Zambon for discussions about their forthcoming work~\cite{FlZa} which applies similar ideas to those here to the problem of deformations of presymplectic structures.  This research was supported by the Institut Henri Poincar\'e, a NSERC Discovery Grant as well as a grant from the Fondation Math\'ematique Jacques Hadamard.

\section{Notation and Motivation}\label{section:preliminaries}
\subsection{Dirac structures and dg Lie algebras}\label{dgladir}
Let $X$ be a smooth manifold equipped with a closed real 3-form $H$.
\begin{definition}
The \textbf{generalized tangent bundle} of $(X,H)$ is the bundle
\[
\mathbb{T}X := TX \oplus T^*X \rightarrow X,
\]
endowed with the bilinear form $\langle V + \xi, W + \eta \rangle = \frac{1}{2}(\xi(W) + \eta(V))$ and the \textbf{Courant bracket}
\[
[\![V + \xi, W + \eta]\!]_H = [V, W] + \mathcal{L}_V\eta - \iota_Wd\xi + \iota_V\iota_WH.
\]
on its sheaf of sections. 
%More generally, for any closed 3-form $H \in \Omega^3(M)$, the \textbf{twisted Courant bracket} on $\Gamma(\mathbb{T}X)$ is 
%\[
%[\![V + \xi, W + \eta]\!]_H = [\![V + \xi, W + \eta]\!] 
%\]
\end{definition}
\begin{definition}
A subspace of $\mathbb{T}_xX = T_xX \oplus T_x^*X$ is \textbf{Lagrangian} if it is isotropic with respect to the split-signature bilinear form on $\mathbb{T}X$ and has dimension $\textrm{dim}(X)$. An \textbf{almost Dirac structure} is a Lagrangian subbundle $L \subseteq \mathbb{T}X$. It is a \textbf{Dirac structure} if, additionally, $L$ is involutive with respect to the Courant bracket.
\end{definition}
Let $M$ be a Dirac structure. The restriction of the Courant bracket to $\Gamma(M)$ (which we denote by $[\cdot, \cdot]_M$), together with the projection $a: L \rightarrow TM$, gives $M$ the structure of a Lie algebroid \cite{Co}. 
The bracket $[\cdot, \cdot]_M$ extends to the Schouten bracket on the space $\mathcal{X}_M = \Gamma (\wedge M)$ of $M$-vector fields by letting $[f,s]=-\iota_{a(s)}df$, $f\in \mathcal{O}_X, s\in \Gamma(M)$ and requiring it to be a graded derivation in each entry. Also, the space $\Omega_M=\Gamma(\wedge M^*)$ of $M$-forms has the differential $d_M$ given by the formula
\begin{align*}
d_M\omega(V_0, \dots, V_p) &:= \sum_{i}(-1)^ia(V_i)\left( \omega(V_0, \dots, \hat{V}_i, \dots, V_p)\right)\\
& \ \ \ \  + \sum_{i < j} (-1)^{i+j} \omega([V_i, V_j]_M, V_0, \dots, \hat{V}_i, \dots, \hat{V}_j, \dots, V_p).
\end{align*}
We will soon see that choosing a Dirac structure $L$ transverse to $M$ produces the additional structure of a bracket on $\Omega_M$, forming a differential graded Lie algebra. We recall the definition. 

%If $L$ is a Dirac structure transverse to $M$, the inner product on $\mathbb{T}M$ induces an isomorphism $M^* \cong L$, so the Lie algebroid bracket $[\cdot , \cdot]_L$ induces a bracket (also written $[\cdot , \cdot ]_L$) on the sheaf of sections of $M^*$. The algebraic structure given to $\Omega^{\bullet}_M$ by this bracket and its differential can be studied using the language of \emph{graded complexes}, \emph{differential graded Lie algebras} and \emph{$L_{\infty}$ algebras}. 

%\begin{definition} A \textbf{graded complex} is a pair $(V, d)$ consisting of a graded vector space $V = \oplus_{i \in \mathbb{Z}} V_i$ and a degree one linear map $d: V \rightarrow V$ satisfying $d^2 = 0$ called the \textbf{differential}. The \textbf{tensor product} of $(V, d_V)$ and $(W, d_W)$ is the graded complex with vector space $(V \otimes W)_i = \oplus_{p + q = i} V_p \otimes V_q$, and differential given on homogeneous elements by $v \otimes w \mapsto d_V(v)\otimes w + (-1)^{|v|}v \otimes d_W(w)$, where $| \cdot |$ denotes the degree of an element. The graded complex $(V[p], d_V)$ is defined by shifting the degrees of the elements, $(V[p])_{i} = V_{i+p}$, but keeping the differential the same.
%\end{definition}

\begin{definition} A \textbf{differential graded Lie algebra} (DGLA) is a triple $(V, d, [\cdot, \cdot])$ consisting of a $\ZZ$-graded complex $(V, d)$, and a degree zero map $[\cdot, \cdot]: V \otimes V \rightarrow V$ called the \textbf{bracket} such that $d$ is a derivation of the bracket, the bracket is graded skew-symmetric, and the bracket satisfies the graded Jacobi identity. That is, for all homogeneous $v, w, z \in V$, the following conditions hold:
\begin{enumerate}
\item[i)] $d[v, w] = [dv, w] + (-1)^{|v|}[v, dw]$, 
\item[ii)] $[v, w] = -(-1)^{|v||w|}[w,v]$,
\item[iii)] $[v, [w, z]] + (-1)^{|v||w| + |v||z|}[w, [z, v]] + (-1)^{|w||z| + |v||z|}[z, [w, v]]=0$. 
\end{enumerate}
\end{definition}

\begin{example}[The DGLA associated to a transverse pair of Dirac structures]
Given a pair $(M,L)$ of transverse Dirac structures, the inner product on $\mathbb{T}X$ induces an isomorphism $M^* \cong L$. Under this identification, the Lie algebroid bracket $[\cdot , \cdot]_L$ induces a bracket (also written $[\cdot , \cdot ]_L$) on the sheaf of sections of $M^*$. It was shown in~\cite{LWX} that this bracket extends to a DGLA bracket on the de Rham complex of $M$ (shifted in degree so that $\Omega^1_M$ lies in degree zero), yielding the DGLA 
\begin{equation}
(\Omega_M[1], d_M, [\cdot, \cdot]_L).  
\end{equation}
\end{example}

\begin{definition}
A \textbf{Maurer-Cartan element} of a DGLA $(V, d, [\cdot, \cdot])$ is an element $a \in V$ of degree 1 which satisfies the \textbf{Maurer-Cartan equation}
\begin{equation}\label{eqn:maurercartan}
da + \frac{1}{2}[a, a] = 0.
\end{equation}
\end{definition}

\begin{example}[Maurer-Cartan elements as deformations of Dirac structures]
Given transverse Dirac structures $L$ and $M$, every $\varepsilon \in \Omega^2_L$ defines another Lagrangian subbundle $L_{\varepsilon} := \{x + \varepsilon(x) \mid x \in L\}$, also transverse to $M$. Conversely, every Lagrangian subbundle transverse to $M$ may be realized as $L_{\varepsilon}$ for some $\varepsilon \in \Omega^2_L$. This $L_{\varepsilon}$ is involutive -- and thereby defines a Dirac structure -- precisely if it is a Maurer-Cartan element (\cite[Theorem 6.1]{LWX}), i.e. if $d_L\varepsilon + \frac{1}{2}[\varepsilon,\varepsilon]_M=0$. In this way, small deformations of $L$ are in bijection with Maurer-Cartan elements of the DGLA $(\Omega_L[1], d_L, [ \cdot, \cdot]_M)$, where by ``small'' we mean those that remain transverse to $M$.\\
\end{example}

\subsection{Main motivating idea}

Our focus in this paper is the situation where we have \emph{two} different Dirac structures $L, L_{\varepsilon}$ transverse to $M$, and we wish to compare the corresponding DGLA structures $(\Omega_M[1], d_M, [\cdot , \cdot]_L)$ and $(\Omega_M[1], d_M, [\cdot ,\cdot ]_{L_{\varepsilon}})$.  These DGLAs, while not isomorphic, both describe deformations of $M$:  any almost Dirac structure $M'$ which is transverse to both $L$ and $L_{\varepsilon}$ may expressed as a graph, either of a map $\wt\omega: M \rightarrow L$, or alternatively of a map $\wt\omega_\eps: M \rightarrow L_{\eps}$, as shown in Figure~\ref{FigML}.  Identifying $L$ and $L_{\eps}$ with $M^*$ using the canonical inner product on $\mathbb{T}X$, each of $\wt\omega, \wt\omega_\eps$ may be viewed respectively as elements $\omega, \omega_\eps$ of $\Omega^2_M$. Therefore, we obtain a map $\Omega^2_M\to\Omega^2_M$, defined away from the non-transverse locus, taking $\omega$ to the element $\omega_\eps$ such that 
\[
\textrm{graph}(\wt\omega: M \rightarrow M^*\cong L) = \textrm{graph}(\wt\omega_{\eps}: M \rightarrow M^* \cong L_{\eps}).
\]
The equality of the above graphs may be phrased as the identity 
$\wt\omega_{\eps}(\mathbbm{1}_M-\eps \wt\omega) = (\mathbbm{1}_L+\eps) \wt\omega$; applying the inner product and using $\langle\wt\omega u, v\rangle =\omega(u,v)$ and $\langle\wt\omega_\eps u, v\rangle =\omega_\eps(u,v)$ for $u,v\in M$, we solve for $\omega_\eps$:
\begin{equation}\label{keymcmap}
\omega_{\eps} = \omega (\mathbbm{1} - \eps \omega)^{-1} = \omega + \omega\eps\omega + \omega\eps\omega\eps\omega + \cdots.
\end{equation}
This formula therefore sends Maurer-Cartan elements of $(\Omega_M[1], d_M, [\cdot , \cdot]_L)$ to Maurer-Cartan elements of $(\Omega_M[1], d_M, [\cdot ,\cdot ]_{L_{\varepsilon}})$. We also see that transversality of $M'$ to $L,L_\eps$ holds so long as $(\mathbbm{1}-\eps\omega)$ is invertible as an endomorphism of $M$.

\begin{figure}[!ht]
\centering\begin{tikzpicture}[scale=2,decoration={
    markings,
    mark=at position 0.5 with {\arrow{>}}}]
%    % Draw axes
%    \draw [<->,thick] (0,2) node (yaxis) [above] {$y$}
%        |- (3,0) node (xaxis) [right] {$x$};
    % Draw two intersecting lines
    \draw (0.5,0.5) coordinate (lp1) -- (2,2) coordinate[label=above:{$L_{\varepsilon}$}] (lp2);
    %\draw (0,1) coordinate (m1) -- (2.5,1) coordinate[label=above:M] (m2) ;
		\draw (0,1) coordinate (m1) -- (2.5,1) coordinate[label=right:M] (m2) ;
    \coordinate (O) at (intersection of lp1--lp2 and m1--m2);
	\coordinate (c1) at (0.7,0.4);%(0.5,-0.2); 
	\coordinate[label=above:L] (c2) at ($(c1)!3!(O)$); 
	\draw (c1)--(c2);
	\coordinate (c3) at ($(c1)!2.2!(O)$); 
	\coordinate (f3) at ($(c3)+(1,0)$); 
	\coordinate (lp3) at (intersection of lp1--lp2 and c3--f3);
	\draw[->] (c3) to node [above] {$\varepsilon$} (lp3);
	\coordinate (mp1) at (0, 0.7); 
	%\coordinate[label=above:{$M_1$}] (mp2) at ($(mp1)!2.3!(O)$); 
	\coordinate[label=right:{$M'$}] (mp2) at ($(mp1)!2.5!(O)$); 
	\draw[red] (mp1)--(mp2);
	\coordinate (mp3) at ($(mp1)!2.3!(O)$);
	\coordinate (g3) at ($(mp3)-0.3*(c2)+0.3*(c1)$);
	\coordinate (m4) at (intersection of m1--m2 and mp3--g3);
	\coordinate (h3) at ($(mp3)-0.3*(lp2)+0.3*(lp1)$); 
	\coordinate (m3) at (intersection of m1--m2 and mp3--h3); 
	\draw[postaction={decorate}] (m4) to node [right] {$\omega$} (mp3);
	\draw[postaction={decorate}] (m3)to node [left] {$\omega_\varepsilon$} (mp3);
    % Draw lines indicating intersection with y and x axis. Here we use
    % the perpendicular coordinate system
%    \draw[dashed] (yaxis |- c) node[left] {$y'$}
%        -| (xaxis -| c) node[below] {$x'$};
    % Draw a dot to indicate intersection point
%    \fill[red] (c) circle (1pt);
\end{tikzpicture}
\caption{Two descriptions of a deformation $M'$ of $M$ using the complements $L, L_{\varepsilon}$ }\label{FigML}
\end{figure}
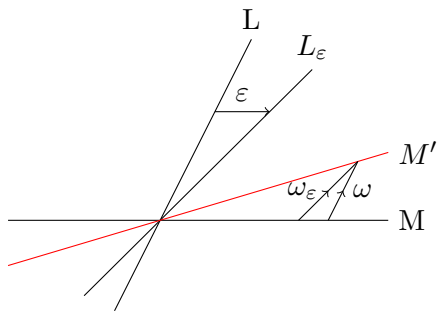

We conclude from the above that while the two DGLA structures are not necessarily isomorphic, there is a correspondence between their Maurer-Cartan elements.  In Section~\ref{section:proof}, we show that these DGLAs are actually isomorphic as $L_\infty$ algebras, and that the successive terms of Equation~\ref{keymcmap} are generated by the respective components of the $L_\infty$ morphism. We now briefly review the key aspects of $L_{\infty}$ algebras which we need for the construction.

\subsection{$L_\infty$ structures and morphisms}\label{linfsym}

Let $V = \oplus_{i \in \mathbb{Z}} V_i$ be a graded vector space. Given a permutation $\sigma \in S_n$ and elements $v_1, \dots, v_n \in V$, 
we define the \textbf{Koszul sign} $\eps(\sigma)$ to be the product of the factors $(-1)^{|v_i||v_{i+1}|}$ for each transposition of $v_i$ with $v_{i+1}$ 
involved in the permutation. Although $\eps(\sigma)$ depends on $|v_1|, \dots, |v_n|$, this dependence is suppressed in the notation. 

%$\chi(\sigma) \in \pm 1$ equals the sign of the permutation times a factor of $(-1)^{|v_i||v_{i+1}|$ for each transposition of $v_i$ with $v_{i+1}$ involved in the permutation ($\chi(\sigma)$ depends on $|v_1|, \dots, |v_n|$, although this is surpressed in the notation). The symbol $\eps(\sigma) \in \pm 1$ equals $\chi(\sigma)$ divided by the sign of the permutation.\\
\begin{definition} 
The \textbf{$n^{th}$ graded exterior algebra} of $V$ is the graded algebra $\wedge V $ consisting of elements of $\oplus_{n\ge0}V^{\otimes n}$ 
fixed by the action of $S_n$
\[
v_1 \otimes \dots \otimes v_n \mapsto sign(\sigma)\eps(\sigma)v_{\sigma_1} \otimes \dots \otimes v_{\sigma_n}.
\]
while the \textbf{$n^{th}$ graded symmetric algebra} of $V$ is the graded algebra $S(V)$ consisting of elements of $\oplus_{n\ge0}V^{\otimes n}$ fixed by the action of $S_n$
\[
v_1 \otimes \dots \otimes v_n \mapsto \eps(\sigma)v_{\sigma_1} \otimes \dots \otimes v_{\sigma_n}.
\]
\end{definition}
Equivalently, $\wedge V$ and $S(V)$ are the quotients of $\otimes V$ by the ideal generated by elements of the form 
$v \otimes w + (-1)^{|v||w|}w \otimes v$ and the ideal generated by elements of the form $v \otimes w - (-1)^{|v||w|}w \otimes v$, respectively. 
The graded symmetric algebra is also endowed with a canonical coalgebra structure: 
the counit is given by the canonical projection $\eps_S: S(V) \rightarrow S^{0}V$, 
and the comultiplication is given on $v \in V \cong S^1(V)$ by 
\[
\Delta_S(v) = 1 \otimes v + v \otimes 1
\]
and extending it as a morphism of algebras $\Delta_S: S(V) \rightarrow S(V)\otimes S(V)$ . 

% I believe, the truncated symmetric algebra is needed only when defining FLAT L infinity algebras (MM)
%Let $\overline{S}^{\bullet}(V)$ denote the coalgebra obtained from $S^{\bullet}V$ by removing the constant terms
%\[
%\overline{S}^{\bullet}V := \oplus_{i \geq 1} S^iV.
%\]

\begin{definition}\label{def:linf2}
An $L_{\infty}$-algebra is a pair $(V, Q)$ where $V$ is a graded vector space and $Q$ is a coderivation of degree $1$ on the coalgebra
${S}(V[1])$ which satisfies $Q^2=0$.
\end{definition}

One may equivalently define an $L_{\infty}$-algebra as a sequence of degree $1$ linear maps $m_n: S^n(V[1]) \rightarrow V[1]$ for $n \geq 0$ 
that satisfy the following 
\textbf{generalized Jacobi equations}, for $n \geq 0$:
\begin{equation}\label{eq:generalized_Jacobi}
\sum_{k=0}^n \sum_{\sigma \in \textrm{Sh}(k, n-k)}  \eps(\sigma) m_{n-k+1}(m_k(v_{\sigma_1}, \dots, v_{\sigma_k}), v_{\sigma_{k+1}}, \dots, v_{\sigma_{n}}) = 0
\end{equation}
where $\textrm{Sh}(i, n-i)$ consists of all permutations with $\sigma_1 < \dots < \sigma_i$ and $\sigma_{i+1} < \dots < \sigma_{n}$. 
The coderivation $Q$ can be uniquely recovered from the collection of maps $m_n$, $n\ge 0$, which may be viewed as the successive components of an expansion of $Q$ in Taylor series.
The conditions for $Q$ to be a coalgebra differential translate precisely into the conditions for these $m_n$ to satisfy 
the generalized Jacobi equations.  
If the $m_0$ vanishes, the $L_{\infty}$-structure is called \textbf{flat}, although this is often tacitly assumed when dealing with $L_{\infty}$ algebras.

Yet another equivalent way to encode an $L_\infty$ algebra, useful in practice, is to specify skew-symmetric maps $\ell_n$, rather than symmetric ones $m_n$.
Specifically, one can get degree $(2-n)$ maps $\ell_n:\wedge^n V \to V$, $n\ge0$ by letting
\begin{equation}\label{eq:symm_to_skew}
 \ell_n(v_1,\dots,v_n) = (-1)^n (-1)^{\sum_{i=1}^n(n-i)|v_i|}m_n(v_1,\dots,v_n).
\end{equation}
The maps $\ell_n$ satisfy a version of generalized Jacobi equations, which can be obtained from \eqref{eq:generalized_Jacobi} using 
the substitution \eqref{eq:symm_to_skew} (one gets more signs to keep track of, so in this respect the maps $m_n$ are more convenient).

% Many authors (equivalently) define an $L_{\infty}$-algebra as a sequence of linear maps $\ell_n: \wedge^nV \rightarrow V$ for $n \geq 0$ that satisfy the following 
% \textbf{generalized Jacobi equations}, for $n \geq 0$:
% \[
% \sum_{i + j = n+1} \sum_{\sigma \in \textrm{Sh}(i, n-i)}  sign(\sigma) \eps(\sigma) (-1)^{i(j-1)} \ell_j(\ell_i(v_{\sigma_1}, \dots, v_{\sigma_i}), v_{\sigma_{i+1}}, \dots, v_{\sigma_{n}}) = 0
% \]
% where $Sh(i, n-i)$ consists of all permutations with $\sigma_1 < \dots < \sigma_i$ and $\sigma_{i+1} < \dots < \sigma_{n}$. 
% We can obtain the maps $\ell_n$ from $Q$ in the following way. 
% Let $Q_{n}: {S}^n(V[1]) \rightarrow {S}^1(V[1])$ denote the composition of $\restr{Q}{{S}^n(V[1])}$ with the projection onto ${S}^1(V[1]) \cong V[1]$. 
% By the d\'{e}calage isomorphisms $S^n(V[1]) \rightarrow \Lambda^n(V)[n]$, the maps $Q_n$ induce degree one maps
% \[
% (\Lambda^n V)[n] \rightarrow V[1].
% \]
% or, equivalently, degree $2-n$ maps
% \[
% \ell_n: \Lambda^n V \rightarrow V.
% \]
% The conditions for $Q$ to be a coalgebra differential translate precisely into the conditions for these $\ell_n$ to satisfy 
% the generalized Jacobi equations. Conversely, any collection maps $\ell_n$ that satisfy the generalized Jacobi eqations can be assembled, using the d\'{e}calage isomorphisms, into a single map ${S}(V[1]) \to V[1]$ which can then be further extended to a coalgebra differential of degree one on ${S}(V[1])$. If the map $\ell_0$ vanishes, the $L_{\infty}$-structure is called \textbf{flat}.

\begin{definition}
A \textbf{(iso)morphism} between two $L_{\infty}$-algebras $(V, Q_V)$, $(W, Q_W)$ is a (iso)morphism of differential graded coalgebras
\[
F:({S}(V[1]), Q_V) \rightarrow ({S}(W[1]), Q_W)
\]
\end{definition}

Equivalently, an $L_\infty$ morphism is a collection of degree $0$ maps
$f_n: S^n(V[1]) \rightarrow W[1]$ satisfying the relations, for $n\ge1$, $v_i\in V[1]$:

\begin{multline}\label{eq:L_inf_relations}
\begin{aligned}
 \sum_{j\geq 1} \sum_{\substack{k_1+\dots+k_j=n\\ \sigma\in \textrm{Sh}(k_1,\dots,k_j)}} 
\tfrac{1}{j!}\eps(\sigma) 
m_j^W\big(f_{k_1}(v_{\sigma_1},\dots,v_{\sigma_{k_1}}), \dots, f_{k_j}(v_{\sigma_{k_1+\dots+k_{j-1}+1}}, \dots, v_{\sigma_n})\big)\\
=\sum_{k=0}^n \sum_{\sigma \in \textrm{Sh}(k, n-k)}  \eps(\sigma) f_{n-k+1}(m_k^V(v_{\sigma_1}, \dots, v_{\sigma_k}), v_{\sigma_{k+1}}, \dots, v_{\sigma_{n}}).
\end{aligned}
\end{multline}
For instance, assuming the $L_\infty$ algebras are flat, the relation for $n=1$ states that $m_1^W(f_1(v_1))=f_1(m_1^V(v_1))$, i.e., $f_1$ is a morphism of cochain complexes; for $n=2$ the statement is:
\begin{multline}\label{eq:L_inf_relation_2}
\begin{aligned}
m_2^W(f_1(v_1),f_1(v_2)) - f_1(m_2^V(v_1,v_2)) &= \\
- m_1^W(f_2(v_1,v_2)) + f_2(m_1^V(v_1),&v_2) + (-1)^{(|v_1|+1)(|v_2|+1)}f_2(m_1^V(v_2),v_1),
\end{aligned}
\end{multline}
implying that $f_1$ induces a Lie algebra homomorphism after passing to cohomology.

An $L_\infty$ morphism $\{f_n\}_{n\ge1}$ between two flat $L_\infty$ algebras $(V, Q_V)$, $(W, Q_W)$ is called a \textbf{quasi-isomorphism} if the map $f_1$ induces an isomorphism on cohomology $H_{m_1^V}(V)\to H_{m_1^W}(W)$. 
A flat $L_\infty$ algebra $(V, Q)$ is called \textbf{formal}, or $L_\infty$-formal, if it is quasi-isomorphic to the Lie algebra $(H_{\ell_1}(V), \ell_2)$.

%Given two Dirac structures $L$ and $L_{\eps}$ transverse to $M$, we will construct an $L_{\infty}$ isomorphism between the DGLAs $(\Omega_M[1], d_M, [\cdot , \cdot]_L)$ and $(\Omega_M[1], d_M, [\cdot ,\cdot ]_{L_{\varepsilon}})$ by first constructing a natural coderivation $R$ of $S(\Omega_M[2])$, then exponentiating it to an automorphism of the coalgebra (which gives an $L_\infty$ isomorphism). 
Any DGLA $(V, d, [\cdot, \cdot])$ defines an $L_{\infty}$-structure with $\ell_1$ given by $d$, $\ell_2$ given by the bracket, and all other terms equal to zero. Any DGLA morphism induces a morphism of their corresponding $L_\infty$ algebras. Recall that our goal is to construct an $L_{\infty}$-isomorphism from $(\Omega_M[1], d_M, [\cdot , \cdot]_L)$ to $(\Omega_M[1], d_M, [\cdot ,\cdot ]_{L_{\varepsilon}})$. One technique for constructing $L_{\infty}$-isomorphisms is to exponentiate a coderivation. Given a graded vector space, a map $C: S^2(V) \rightarrow V$ extends to a coderivation 
\begin{align*}
S(V) &\rightarrow S(V)\\
v_1 \odot \dots \odot v_n &\mapsto \sum_{i < j} \eps(\sigma) C(v_i, v_j) \odot v_1 \odot \cdots \odot \widehat{v_i} \odot \cdots \odot \widehat{v_j} \odot \cdots \odot v_n
\end{align*}
where $\sigma$ is the permutation bringing $v_i, v_j$ to the front of $v_1, \dots, v_n$. Restricted to $S^5(V)$, for example, the term in the sum above corresponding to $i = 2, j = 4$ can be represented diagramatically as
\begin{figure}[ht]
\centering
\begin{tikzpicture}

\node (a1) at (0,1) {};
\node (a2) at (1,1) {};
\node (a3) at (2,1) {};
\node (a4) at (3,1) {};
\node (a5) at (4,1) {};
\draw[fill= black] (a1) circle(0.4mm) node[above] {$v_1$};
\draw[fill= black] (a2) circle(0.4mm) node[above] {$v_2$};
\draw[fill= black] (a3) circle(0.4mm) node[above] {$v_3$};
\draw[fill= black] (a4) circle(0.4mm) node[above] {$v_4$};
\draw[fill= black] (a5) circle(0.4mm) node[above] {$v_5$};

\node[above] (b1) at (0.5,0) {};
\node[above] (b2) at (1.5,0) {};
\node[above] (b3) at (2.5,0) {};
\node[above] (b4) at (3.5,0) {};
\draw[fill= black] (b1) circle(0.4mm) node[below]  {\scriptsize $C(v_2, v_4)$ \ \ };
\draw[fill= black] (b2) circle(0.4mm) node[below] {$v_1$};
\draw[fill= black] (b3) circle(0.4mm) node[below] {$v_3$};
\draw[fill= black] (b4) circle(0.4mm) node[below] {$v_5$};

\draw (a1) -- (b2);
\draw (a2) -- (b1);
\draw (a3) -- (b3);
\draw (a4) -- (b1);
\draw (a5) -- (b4);
\end{tikzpicture}
\end{figure}

The sum of all terms contributing to $\restr{C}{S^5(V)}$ is taken over all ${5 \choose 2} = 10$ such forests, one for each choice of which two terms to compose and bring to the front, and each weighted by the Koszul sign. The coderivation $C$ may be composed with itself; its $n$-fold composition $C^n$ can be represented by concatenating the forest diagrams. For example, we can represent the $S^{4}(V) \rightarrow S^1(V)$ component of $C^3$ as a sum over the following 18 rooted trees, each weighted by Koszul signs.

\begin{figure}[ht]
\centering
\begin{tikzpicture}[scale = 0.8]
\def\Horspacing{1.65}
\def\Graphscaling{0.36}
\def\Verspacing{-1.74}

\begin{scope}[shift={({0*\Horspacing}, 0)}, scale=\Graphscaling]

\node (a1) at (0,1) {};
\node (a2) at (1,1) {};
\node (a3) at (2,1) {};
\node (a4) at (3,1) {};
\node (b1) at (0.5,0) {};
\node (b2) at (1.5,0) {};
\node (b3) at (2.5,0) {};
\node (c1) at (1, -1) {};
\node (c2) at (2, -1) {};
\node (d1) at (1.5, -2) {};

\draw[fill= black] (a1) circle(0.6mm);
\draw[fill= black] (a2) circle(0.6mm);
\draw[fill= black] (a3) circle(0.6mm);
\draw[fill= black] (a4) circle(0.6mm);
\draw[fill= black] (b1) circle(0.6mm);
\draw[fill= black] (b2) circle(0.6mm);
\draw[fill= black] (b3) circle(0.6mm);
\draw[fill= black] (c1) circle(0.6mm);
\draw[fill= black] (c2) circle(0.6mm);
\draw[fill= black] (d1) circle(0.6mm);

\draw (a1.center) -- (b1.center);
\draw (a2.center) -- (b1.center);
\draw (a3.center) -- (b2.center);
\draw (a4.center) -- (b3.center);

\draw (b1.center) -- (c1.center);
\draw (b2.center) -- (c1.center);
\draw (b3.center) -- (c2.center);

\draw (c1.center) -- (d1.center);
\draw (c2.center) -- (d1.center);
\end{scope}

\begin{scope}[shift={({1*\Horspacing}, 0)}, scale=\Graphscaling]

\node (a1) at (0,1) {};
\node (a2) at (1,1) {};
\node (a3) at (2,1) {};
\node (a4) at (3,1) {};
\node (b1) at (0.5,0) {};
\node (b2) at (1.5,0) {};
\node (b3) at (2.5,0) {};
\node (c1) at (1, -1) {};
\node (c2) at (2, -1) {};
\node (d1) at (1.5, -2) {};

\draw[fill= black] (a1) circle(0.6mm);
\draw[fill= black] (a2) circle(0.6mm);
\draw[fill= black] (a3) circle(0.6mm);
\draw[fill= black] (a4) circle(0.6mm);
\draw[fill= black] (b1) circle(0.6mm);
\draw[fill= black] (b2) circle(0.6mm);
\draw[fill= black] (b3) circle(0.6mm);
\draw[fill= black] (c1) circle(0.6mm);
\draw[fill= black] (c2) circle(0.6mm);
\draw[fill= black] (d1) circle(0.6mm);

\draw (a1.center) -- (b1.center);
\draw (a2.center) -- (b2.center);
\draw (a3.center) -- (b1.center);
\draw (a4.center) -- (b3.center);

\draw (b1.center) -- (c1.center);
\draw (b2.center) -- (c1.center);
\draw (b3.center) -- (c2.center);

\draw (c1.center) -- (d1.center);
\draw (c2.center) -- (d1.center);
\end{scope}

\begin{scope}[shift={({2*\Horspacing}, 0)}, scale=\Graphscaling]

\node (a1) at (0,1) {};
\node (a2) at (1,1) {};
\node (a3) at (2,1) {};
\node (a4) at (3,1) {};
\node (b1) at (0.5,0) {};
\node (b2) at (1.5,0) {};
\node (b3) at (2.5,0) {};
\node (c1) at (1, -1) {};
\node (c2) at (2, -1) {};
\node (d1) at (1.5, -2) {};

\draw[fill= black] (a1) circle(0.6mm);
\draw[fill= black] (a2) circle(0.6mm);
\draw[fill= black] (a3) circle(0.6mm);
\draw[fill= black] (a4) circle(0.6mm);
\draw[fill= black] (b1) circle(0.6mm);
\draw[fill= black] (b2) circle(0.6mm);
\draw[fill= black] (b3) circle(0.6mm);
\draw[fill= black] (c1) circle(0.6mm);
\draw[fill= black] (c2) circle(0.6mm);
\draw[fill= black] (d1) circle(0.6mm);

\draw (a1.center) -- (b1.center);
\draw (a2.center) -- (b2.center);
\draw (a3.center) -- (b3.center);
\draw (a4.center) -- (b1.center);

\draw (b1.center) -- (c1.center);
\draw (b2.center) -- (c1.center);
\draw (b3.center) -- (c2.center);

\draw (c1.center) -- (d1.center);
\draw (c2.center) -- (d1.center);
\end{scope}

\begin{scope}[shift={({3*\Horspacing}, 0)}, scale=\Graphscaling]

\node (a1) at (0,1) {};
\node (a2) at (1,1) {};
\node (a3) at (2,1) {};
\node (a4) at (3,1) {};
\node (b1) at (0.5,0) {};
\node (b2) at (1.5,0) {};
\node (b3) at (2.5,0) {};
\node (c1) at (1, -1) {};
\node (c2) at (2, -1) {};
\node (d1) at (1.5, -2) {};

\draw[fill= black] (a1) circle(0.6mm);
\draw[fill= black] (a2) circle(0.6mm);
\draw[fill= black] (a3) circle(0.6mm);
\draw[fill= black] (a4) circle(0.6mm);
\draw[fill= black] (b1) circle(0.6mm);
\draw[fill= black] (b2) circle(0.6mm);
\draw[fill= black] (b3) circle(0.6mm);
\draw[fill= black] (c1) circle(0.6mm);
\draw[fill= black] (c2) circle(0.6mm);
\draw[fill= black] (d1) circle(0.6mm);

\draw (a1.center) -- (b2.center);
\draw (a2.center) -- (b1.center);
\draw (a3.center) -- (b1.center);
\draw (a4.center) -- (b3.center);

\draw (b1.center) -- (c1.center);
\draw (b2.center) -- (c1.center);
\draw (b3.center) -- (c2.center);

\draw (c1.center) -- (d1.center);
\draw (c2.center) -- (d1.center);
\end{scope}

\begin{scope}[shift={({4*\Horspacing}, 0)}, scale=\Graphscaling]

\node (a1) at (0,1) {};
\node (a2) at (1,1) {};
\node (a3) at (2,1) {};
\node (a4) at (3,1) {};
\node (b1) at (0.5,0) {};
\node (b2) at (1.5,0) {};
\node (b3) at (2.5,0) {};
\node (c1) at (1, -1) {};
\node (c2) at (2, -1) {};
\node (d1) at (1.5, -2) {};

\draw[fill= black] (a1) circle(0.6mm);
\draw[fill= black] (a2) circle(0.6mm);
\draw[fill= black] (a3) circle(0.6mm);
\draw[fill= black] (a4) circle(0.6mm);
\draw[fill= black] (b1) circle(0.6mm);
\draw[fill= black] (b2) circle(0.6mm);
\draw[fill= black] (b3) circle(0.6mm);
\draw[fill= black] (c1) circle(0.6mm);
\draw[fill= black] (c2) circle(0.6mm);
\draw[fill= black] (d1) circle(0.6mm);

\draw (a1.center) -- (b2.center);
\draw (a2.center) -- (b1.center);
\draw (a3.center) -- (b3.center);
\draw (a4.center) -- (b1.center);

\draw (b1.center) -- (c1.center);
\draw (b2.center) -- (c1.center);
\draw (b3.center) -- (c2.center);

\draw (c1.center) -- (d1.center);
\draw (c2.center) -- (d1.center);
\end{scope}

\begin{scope}[shift={({5*\Horspacing}, 0)}, scale=\Graphscaling]

\node (a1) at (0,1) {};
\node (a2) at (1,1) {};
\node (a3) at (2,1) {};
\node (a4) at (3,1) {};
\node (b1) at (0.5,0) {};
\node (b2) at (1.5,0) {};
\node (b3) at (2.5,0) {};
\node (c1) at (1, -1) {};
\node (c2) at (2, -1) {};
\node (d1) at (1.5, -2) {};

\draw[fill= black] (a1) circle(0.6mm);
\draw[fill= black] (a2) circle(0.6mm);
\draw[fill= black] (a3) circle(0.6mm);
\draw[fill= black] (a4) circle(0.6mm);
\draw[fill= black] (b1) circle(0.6mm);
\draw[fill= black] (b2) circle(0.6mm);
\draw[fill= black] (b3) circle(0.6mm);
\draw[fill= black] (c1) circle(0.6mm);
\draw[fill= black] (c2) circle(0.6mm);
\draw[fill= black] (d1) circle(0.6mm);

\draw (a1.center) -- (b2.center);
\draw (a2.center) -- (b3.center);
\draw (a3.center) -- (b1.center);
\draw (a4.center) -- (b1.center);

\draw (b1.center) -- (c1.center);
\draw (b2.center) -- (c1.center);
\draw (b3.center) -- (c2.center);

\draw (c1.center) -- (d1.center);
\draw (c2.center) -- (d1.center);
\end{scope}

%%%%%SECOND THIRD OF PICTURES

\begin{scope}[shift={({6*\Horspacing}, 0)}, scale=\Graphscaling]

\node (a1) at (0,1) {};
\node (a2) at (1,1) {};
\node (a3) at (2,1) {};
\node (a4) at (3,1) {};
\node (b1) at (0.5,0) {};
\node (b2) at (1.5,0) {};
\node (b3) at (2.5,0) {};
\node (c1) at (1, -1) {};
\node (c2) at (2, -1) {};
\node (d1) at (1.5, -2) {};

\draw[fill= black] (a1) circle(0.6mm);
\draw[fill= black] (a2) circle(0.6mm);
\draw[fill= black] (a3) circle(0.6mm);
\draw[fill= black] (a4) circle(0.6mm);
\draw[fill= black] (b1) circle(0.6mm);
\draw[fill= black] (b2) circle(0.6mm);
\draw[fill= black] (b3) circle(0.6mm);
\draw[fill= black] (c1) circle(0.6mm);
\draw[fill= black] (c2) circle(0.6mm);
\draw[fill= black] (d1) circle(0.6mm);

\draw (a1.center) -- (b1.center);
\draw (a2.center) -- (b1.center);
\draw (a3.center) -- (b2.center);
\draw (a4.center) -- (b3.center);

\draw (b1.center) -- (c1.center);
\draw (b2.center) -- (c2.center);
\draw (b3.center) -- (c1.center);

\draw (c1.center) -- (d1.center);
\draw (c2.center) -- (d1.center);
\end{scope}

\begin{scope}[shift={({7*\Horspacing}, 0)}, scale=\Graphscaling]

\node (a1) at (0,1) {};
\node (a2) at (1,1) {};
\node (a3) at (2,1) {};
\node (a4) at (3,1) {};
\node (b1) at (0.5,0) {};
\node (b2) at (1.5,0) {};
\node (b3) at (2.5,0) {};
\node (c1) at (1, -1) {};
\node (c2) at (2, -1) {};
\node (d1) at (1.5, -2) {};

\draw[fill= black] (a1) circle(0.6mm);
\draw[fill= black] (a2) circle(0.6mm);
\draw[fill= black] (a3) circle(0.6mm);
\draw[fill= black] (a4) circle(0.6mm);
\draw[fill= black] (b1) circle(0.6mm);
\draw[fill= black] (b2) circle(0.6mm);
\draw[fill= black] (b3) circle(0.6mm);
\draw[fill= black] (c1) circle(0.6mm);
\draw[fill= black] (c2) circle(0.6mm);
\draw[fill= black] (d1) circle(0.6mm);

\draw (a1.center) -- (b1.center);
\draw (a2.center) -- (b2.center);
\draw (a3.center) -- (b1.center);
\draw (a4.center) -- (b3.center);

\draw (b1.center) -- (c1.center);
\draw (b2.center) -- (c2.center);
\draw (b3.center) -- (c1.center);

\draw (c1.center) -- (d1.center);
\draw (c2.center) -- (d1.center);
\end{scope}

\begin{scope}[shift={({8*\Horspacing}, 0)}, scale=\Graphscaling]

\node (a1) at (0,1) {};
\node (a2) at (1,1) {};
\node (a3) at (2,1) {};
\node (a4) at (3,1) {};
\node (b1) at (0.5,0) {};
\node (b2) at (1.5,0) {};
\node (b3) at (2.5,0) {};
\node (c1) at (1, -1) {};
\node (c2) at (2, -1) {};
\node (d1) at (1.5, -2) {};

\draw[fill= black] (a1) circle(0.6mm);
\draw[fill= black] (a2) circle(0.6mm);
\draw[fill= black] (a3) circle(0.6mm);
\draw[fill= black] (a4) circle(0.6mm);
\draw[fill= black] (b1) circle(0.6mm);
\draw[fill= black] (b2) circle(0.6mm);
\draw[fill= black] (b3) circle(0.6mm);
\draw[fill= black] (c1) circle(0.6mm);
\draw[fill= black] (c2) circle(0.6mm);
\draw[fill= black] (d1) circle(0.6mm);

\draw (a1.center) -- (b1.center);
\draw (a2.center) -- (b2.center);
\draw (a3.center) -- (b3.center);
\draw (a4.center) -- (b1.center);

\draw (b1.center) -- (c1.center);
\draw (b2.center) -- (c2.center);
\draw (b3.center) -- (c1.center);

\draw (c1.center) -- (d1.center);
\draw (c2.center) -- (d1.center);
\end{scope}

\begin{scope}[shift={({0*\Horspacing}, \Verspacing)}, scale=\Graphscaling]

\node (a1) at (0,1) {};
\node (a2) at (1,1) {};
\node (a3) at (2,1) {};
\node (a4) at (3,1) {};
\node (b1) at (0.5,0) {};
\node (b2) at (1.5,0) {};
\node (b3) at (2.5,0) {};
\node (c1) at (1, -1) {};
\node (c2) at (2, -1) {};
\node (d1) at (1.5, -2) {};

\draw[fill= black] (a1) circle(0.6mm);
\draw[fill= black] (a2) circle(0.6mm);
\draw[fill= black] (a3) circle(0.6mm);
\draw[fill= black] (a4) circle(0.6mm);
\draw[fill= black] (b1) circle(0.6mm);
\draw[fill= black] (b2) circle(0.6mm);
\draw[fill= black] (b3) circle(0.6mm);
\draw[fill= black] (c1) circle(0.6mm);
\draw[fill= black] (c2) circle(0.6mm);
\draw[fill= black] (d1) circle(0.6mm);

\draw (a1.center) -- (b2.center);
\draw (a2.center) -- (b1.center);
\draw (a3.center) -- (b1.center);
\draw (a4.center) -- (b3.center);

\draw (b1.center) -- (c1.center);
\draw (b2.center) -- (c2.center);
\draw (b3.center) -- (c1.center);

\draw (c1.center) -- (d1.center);
\draw (c2.center) -- (d1.center);
\end{scope}

\begin{scope}[shift={({1*\Horspacing}, \Verspacing)}, scale=\Graphscaling]

\node (a1) at (0,1) {};
\node (a2) at (1,1) {};
\node (a3) at (2,1) {};
\node (a4) at (3,1) {};
\node (b1) at (0.5,0) {};
\node (b2) at (1.5,0) {};
\node (b3) at (2.5,0) {};
\node (c1) at (1, -1) {};
\node (c2) at (2, -1) {};
\node (d1) at (1.5, -2) {};

\draw[fill= black] (a1) circle(0.6mm);
\draw[fill= black] (a2) circle(0.6mm);
\draw[fill= black] (a3) circle(0.6mm);
\draw[fill= black] (a4) circle(0.6mm);
\draw[fill= black] (b1) circle(0.6mm);
\draw[fill= black] (b2) circle(0.6mm);
\draw[fill= black] (b3) circle(0.6mm);
\draw[fill= black] (c1) circle(0.6mm);
\draw[fill= black] (c2) circle(0.6mm);
\draw[fill= black] (d1) circle(0.6mm);

\draw (a1.center) -- (b2.center);
\draw (a2.center) -- (b1.center);
\draw (a3.center) -- (b3.center);
\draw (a4.center) -- (b1.center);

\draw (b1.center) -- (c1.center);
\draw (b2.center) -- (c2.center);
\draw (b3.center) -- (c1.center);

\draw (c1.center) -- (d1.center);
\draw (c2.center) -- (d1.center);
\end{scope}

\begin{scope}[shift={({2*\Horspacing}, \Verspacing)}, scale=\Graphscaling]

\node (a1) at (0,1) {};
\node (a2) at (1,1) {};
\node (a3) at (2,1) {};
\node (a4) at (3,1) {};
\node (b1) at (0.5,0) {};
\node (b2) at (1.5,0) {};
\node (b3) at (2.5,0) {};
\node (c1) at (1, -1) {};
\node (c2) at (2, -1) {};
\node (d1) at (1.5, -2) {};

\draw[fill= black] (a1) circle(0.6mm);
\draw[fill= black] (a2) circle(0.6mm);
\draw[fill= black] (a3) circle(0.6mm);
\draw[fill= black] (a4) circle(0.6mm);
\draw[fill= black] (b1) circle(0.6mm);
\draw[fill= black] (b2) circle(0.6mm);
\draw[fill= black] (b3) circle(0.6mm);
\draw[fill= black] (c1) circle(0.6mm);
\draw[fill= black] (c2) circle(0.6mm);
\draw[fill= black] (d1) circle(0.6mm);

\draw (a1.center) -- (b2.center);
\draw (a2.center) -- (b3.center);
\draw (a3.center) -- (b1.center);
\draw (a4.center) -- (b1.center);

\draw (b1.center) -- (c1.center);
\draw (b2.center) -- (c2.center);
\draw (b3.center) -- (c1.center);

\draw (c1.center) -- (d1.center);
\draw (c2.center) -- (d1.center);
\end{scope}

%%%THIRD THIRD 

\begin{scope}[shift={({3*\Horspacing}, \Verspacing)}, scale=\Graphscaling]

\node (a1) at (0,1) {};
\node (a2) at (1,1) {};
\node (a3) at (2,1) {};
\node (a4) at (3,1) {};
\node (b1) at (0.5,0) {};
\node (b2) at (1.5,0) {};
\node (b3) at (2.5,0) {};
\node (c1) at (1, -1) {};
\node (c2) at (2, -1) {};
\node (d1) at (1.5, -2) {};

\draw[fill= black] (a1) circle(0.6mm);
\draw[fill= black] (a2) circle(0.6mm);
\draw[fill= black] (a3) circle(0.6mm);
\draw[fill= black] (a4) circle(0.6mm);
\draw[fill= black] (b1) circle(0.6mm);
\draw[fill= black] (b2) circle(0.6mm);
\draw[fill= black] (b3) circle(0.6mm);
\draw[fill= black] (c1) circle(0.6mm);
\draw[fill= black] (c2) circle(0.6mm);
\draw[fill= black] (d1) circle(0.6mm);

\draw (a1.center) -- (b1.center);
\draw (a2.center) -- (b1.center);
\draw (a3.center) -- (b2.center);
\draw (a4.center) -- (b3.center);

\draw (b1.center) -- (c2.center);
\draw (b2.center) -- (c1.center);
\draw (b3.center) -- (c1.center);

\draw (c1.center) -- (d1.center);
\draw (c2.center) -- (d1.center);
\end{scope}

\begin{scope}[shift={({4*\Horspacing}, \Verspacing)}, scale=\Graphscaling]

\node (a1) at (0,1) {};
\node (a2) at (1,1) {};
\node (a3) at (2,1) {};
\node (a4) at (3,1) {};
\node (b1) at (0.5,0) {};
\node (b2) at (1.5,0) {};
\node (b3) at (2.5,0) {};
\node (c1) at (1, -1) {};
\node (c2) at (2, -1) {};
\node (d1) at (1.5, -2) {};

\draw[fill= black] (a1) circle(0.6mm);
\draw[fill= black] (a2) circle(0.6mm);
\draw[fill= black] (a3) circle(0.6mm);
\draw[fill= black] (a4) circle(0.6mm);
\draw[fill= black] (b1) circle(0.6mm);
\draw[fill= black] (b2) circle(0.6mm);
\draw[fill= black] (b3) circle(0.6mm);
\draw[fill= black] (c1) circle(0.6mm);
\draw[fill= black] (c2) circle(0.6mm);
\draw[fill= black] (d1) circle(0.6mm);

\draw (a1.center) -- (b1.center);
\draw (a2.center) -- (b2.center);
\draw (a3.center) -- (b1.center);
\draw (a4.center) -- (b3.center);

\draw (b1.center) -- (c2.center);
\draw (b2.center) -- (c1.center);
\draw (b3.center) -- (c1.center);

\draw (c1.center) -- (d1.center);
\draw (c2.center) -- (d1.center);
\end{scope}

\begin{scope}[shift={({5*\Horspacing}, \Verspacing)}, scale=\Graphscaling]

\node (a1) at (0,1) {};
\node (a2) at (1,1) {};
\node (a3) at (2,1) {};
\node (a4) at (3,1) {};
\node (b1) at (0.5,0) {};
\node (b2) at (1.5,0) {};
\node (b3) at (2.5,0) {};
\node (c1) at (1, -1) {};
\node (c2) at (2, -1) {};
\node (d1) at (1.5, -2) {};

\draw[fill= black] (a1) circle(0.6mm);
\draw[fill= black] (a2) circle(0.6mm);
\draw[fill= black] (a3) circle(0.6mm);
\draw[fill= black] (a4) circle(0.6mm);
\draw[fill= black] (b1) circle(0.6mm);
\draw[fill= black] (b2) circle(0.6mm);
\draw[fill= black] (b3) circle(0.6mm);
\draw[fill= black] (c1) circle(0.6mm);
\draw[fill= black] (c2) circle(0.6mm);
\draw[fill= black] (d1) circle(0.6mm);

\draw (a1.center) -- (b1.center);
\draw (a2.center) -- (b2.center);
\draw (a3.center) -- (b3.center);
\draw (a4.center) -- (b1.center);

\draw (b1.center) -- (c2.center);
\draw (b2.center) -- (c1.center);
\draw (b3.center) -- (c1.center);

\draw (c1.center) -- (d1.center);
\draw (c2.center) -- (d1.center);
\end{scope}

\begin{scope}[shift={({6*\Horspacing}, \Verspacing)}, scale=\Graphscaling]

\node (a1) at (0,1) {};
\node (a2) at (1,1) {};
\node (a3) at (2,1) {};
\node (a4) at (3,1) {};
\node (b1) at (0.5,0) {};
\node (b2) at (1.5,0) {};
\node (b3) at (2.5,0) {};
\node (c1) at (1, -1) {};
\node (c2) at (2, -1) {};
\node (d1) at (1.5, -2) {};

\draw[fill= black] (a1) circle(0.6mm);
\draw[fill= black] (a2) circle(0.6mm);
\draw[fill= black] (a3) circle(0.6mm);
\draw[fill= black] (a4) circle(0.6mm);
\draw[fill= black] (b1) circle(0.6mm);
\draw[fill= black] (b2) circle(0.6mm);
\draw[fill= black] (b3) circle(0.6mm);
\draw[fill= black] (c1) circle(0.6mm);
\draw[fill= black] (c2) circle(0.6mm);
\draw[fill= black] (d1) circle(0.6mm);

\draw (a1.center) -- (b2.center);
\draw (a2.center) -- (b1.center);
\draw (a3.center) -- (b1.center);
\draw (a4.center) -- (b3.center);

\draw (b1.center) -- (c2.center);
\draw (b2.center) -- (c1.center);
\draw (b3.center) -- (c1.center);

\draw (c1.center) -- (d1.center);
\draw (c2.center) -- (d1.center);
\end{scope}

\begin{scope}[shift={({7*\Horspacing}, \Verspacing)}, scale=\Graphscaling]

\node (a1) at (0,1) {};
\node (a2) at (1,1) {};
\node (a3) at (2,1) {};
\node (a4) at (3,1) {};
\node (b1) at (0.5,0) {};
\node (b2) at (1.5,0) {};
\node (b3) at (2.5,0) {};
\node (c1) at (1, -1) {};
\node (c2) at (2, -1) {};
\node (d1) at (1.5, -2) {};

\draw[fill= black] (a1) circle(0.6mm);
\draw[fill= black] (a2) circle(0.6mm);
\draw[fill= black] (a3) circle(0.6mm);
\draw[fill= black] (a4) circle(0.6mm);
\draw[fill= black] (b1) circle(0.6mm);
\draw[fill= black] (b2) circle(0.6mm);
\draw[fill= black] (b3) circle(0.6mm);
\draw[fill= black] (c1) circle(0.6mm);
\draw[fill= black] (c2) circle(0.6mm);
\draw[fill= black] (d1) circle(0.6mm);

\draw (a1.center) -- (b2.center);
\draw (a2.center) -- (b1.center);
\draw (a3.center) -- (b3.center);
\draw (a4.center) -- (b1.center);

\draw (b1.center) -- (c2.center);
\draw (b2.center) -- (c1.center);
\draw (b3.center) -- (c1.center);

\draw (c1.center) -- (d1.center);
\draw (c2.center) -- (d1.center);
\end{scope}

\begin{scope}[shift={({8*\Horspacing}, \Verspacing)}, scale=\Graphscaling]

\node (a1) at (0,1) {};
\node (a2) at (1,1) {};
\node (a3) at (2,1) {};
\node (a4) at (3,1) {};
\node (b1) at (0.5,0) {};
\node (b2) at (1.5,0) {};
\node (b3) at (2.5,0) {};
\node (c1) at (1, -1) {};
\node (c2) at (2, -1) {};
\node (d1) at (1.5, -2) {};

\draw[fill= black] (a1) circle(0.6mm);
\draw[fill= black] (a2) circle(0.6mm);
\draw[fill= black] (a3) circle(0.6mm);
\draw[fill= black] (a4) circle(0.6mm);
\draw[fill= black] (b1) circle(0.6mm);
\draw[fill= black] (b2) circle(0.6mm);
\draw[fill= black] (b3) circle(0.6mm);
\draw[fill= black] (c1) circle(0.6mm);
\draw[fill= black] (c2) circle(0.6mm);
\draw[fill= black] (d1) circle(0.6mm);

\draw (a1.center) -- (b2.center);
\draw (a2.center) -- (b3.center);
\draw (a3.center) -- (b1.center);
\draw (a4.center) -- (b1.center);

\draw (b1.center) -- (c2.center);
\draw (b2.center) -- (c1.center);
\draw (b3.center) -- (c1.center);

\draw (c1.center) -- (d1.center);
\draw (c2.center) -- (d1.center);
\end{scope}
\end{tikzpicture}
\end{figure}

Finally, $e^C: S(V) \rightarrow S(V)$ is defined as
\begin{equation}\label{eqn:exp_coder}
e^C = \textrm{Id} + C + \frac{C^2}{2!} + \frac{C^3}{3!} + \cdots
\end{equation}
Notice that $C^n$ vanishes for all elements of $S^{\leq n}(V)$. As such, the convergence of $e^C$ is guaranteed by the fact that for any fixed element of $S(V)$, the right side of Equation \ref{eqn:exp_coder} has finitely many nonzero terms.

We now apply this $L_\infty$ formalism to analyze the DGLA structures from Section~\ref{dgladir}. To do so, we take $V=\Omega_M[1]$, the de Rham complex of the Dirac structure $M$ with grading shifted so that $\Omega^1_M$ occurs in degree zero. Given two Dirac structures $L$ and $L_{\eps}$ transverse to $M$, let $\iota_\varepsilon$ be the coderivation of $S(V[1]) = S(\Omega_M[2])$ induced by the contraction map 
$\Omega_M \to \Omega_M $, $\alpha \mapsto \iota_\varepsilon\alpha$, and let $\mu$ be the coderivation of $S(\Omega_M[2])$ induced by the wedge product, $\alpha \odot \beta \mapsto  \alpha \wedge \beta$. Define 
\begin{equation}\label{eq:R_eps_definition}
 R_\varepsilon=[\iota_\varepsilon,\mu],
\end{equation}
where the bracket stands for the graded commutator of coderivations.
We will show in Section \ref{section:proof} that the map $e^{R_\varepsilon}$ provides the required $L_{\infty}$-isomorphism between the DGLAs $(\Omega_M[1], d_M, [\cdot , \cdot]_L)$ and $(\Omega_M[1], d_M, [\cdot ,\cdot ]_{L_{\varepsilon}})$. In Section \ref{section:action} we verify that this $L_{\infty}$-isomorphism acts on Maurer-Cartan elements precisely as given in Equation~\ref{keymcmap}.

\section{Construction of $L_\infty$ structures and morphisms}\label{section:proof}

\subsection{Alternative grading on differential forms}
%Let $L, M$ be a pair of transverse almost Dirac structures.  
%As maximal isotropic subbundles of the Courant algebroid $\TT X$, they are characterized by their corresponding pure spinor line subbundles $K_L$, $K_M$ 
%respectively.  These are rank-1 subbundles of the bundle of differential forms $\Omega$ on the underlying manifold $X$, which is a spinor module for the 
%bundle of Clifford algebras generated by $(\TT X, \langle\cdot,\cdot\rangle)$.  
%The Clifford algebra action on $\Omega$ is generated by the usual action $(X+\xi)\cdot \rho = \iota_X\rho + \xi\wedge\rho$ of sections of $\TT X$ on 
%differential forms; for any maximal isotropic $M$, $K_M$ is defined as the subspace of $\Omega$ annihilated by the Clifford action by $M$. 

Sections of $\TT X$ act on differential forms $\Omega$ by the formula $(X+\xi)\cdot \rho = \iota_X\rho + \xi\wedge\rho$, which extends to an action of the Clifford algebra generated by $(\mathbb{T}X, \langle \cdot, \cdot \rangle)$ on $\Omega$. Given an almost Dirac structure $M$, the differential forms annihilated by the Clifford action of $M$ form a line subbundle $K_M$ of $\Omega$ called the \emph{pure spinor line} of $M$. This pure spinor line completely encodes $M$; see~\cite{Gua11} for a detailed description of the equivalence between Dirac structures and pure spinors.  
Also detailed there is the fact that a pair $M, L$ of transverse almost Dirac structures induces an alternative $\mathbb{Z}$-grading on the module 
$\Omega$; that is, on an $n$-manifold, $\Omega = \oplus_{k=0}^n U_k$, where $U_k = (\wedge^kL) \cdot K_M$.  
This yields the familiar Fock space description of the irreducible spinor module, with $L$ and $M$ acting via raising and lowering operators, respectively:
\begin{equation}\label{eq:spinor_grading}
\xymatrix{
K_M=U_0~~~~ \ar@/^1.0pc/@[black][r]^{L}
  & U_1   \ar@/^1.0pc/@[black][l]^{M} \ar@/^1.0pc/@[black][r]^{L}
  & U_2  \ar@/^1.0pc/@[black][l]^{M} \ar@/^1.0pc/@[black][r]^{L}
  & ~~~\dots \ar@/^1.0pc/@[black][l]^{M}
  & \dots \ar@/^1.0pc/@[black][r]^{L} ~~~
  & ~~~~U_n = K_L \ar@/^1.0pc/@[black][l]^{M}.
  }
\end{equation}
Note that the duality pairing between $L$ and $M$ gives canonical identifications $U_k = \wedge^kM^*\otimes K_M = \wedge^{n-k}L^* \otimes K_L$. 

The differential forms $\Omega$ are equipped with a canonical operator, the twisted exterior derivative $d_H = d + H\wedge\cdot$, which generates the Courant bracket 
according to the derived bracket formula~(see \cite{KS04} for details)
\begin{equation}\label{derbr}
[\![x,y]\!]_H \cdot \rho = [[d_H,x\cdot ],y\cdot ] \rho,\qquad \rho\in\Omega.
\end{equation}
Using this $\mathbb{Z}$-grading, we may decompose the operator $d_H$ into the sum of its graded pieces. Without assuming involutivity for $L$ or $M$, the twisted de Rham differential can only have components of degree $-3$, $-1$, $1$, and $3$,  
which we denote by $N_L$, $\del$, $\delbar$, and $N_M$, respectively:
\begin{equation}\label{dcomps}
\begin{array}{lllllllllllll}
d_H &=& d_{-3}&+&d_{-1}&+&d_1&+&d_3 \\
&=&N_L &+& \del &+& \delbar &+& N_M.
\end{array}
\end{equation}
Here $N_M$ (resp. $N_L$) is the Nijenhuis tensor of $M$ (resp. $L$), i.e. $N_M \in \Omega^3_M$ is defined by 
$N_M(x,y,z) = \langle [\![x,y ]\!]_H, z \rangle$, $x,y,z \in \Gamma(M)$. 
This tensor measures the failure of the almost Dirac structure to be involutive: $d_3=0$ (resp. $d_{-3}=0$) if and only if $M$ (resp. $L$) 
is a Dirac structure (see \cite[Theorem 2.9]{Gua11}).

\subsection{Construction of $L_\infty$ structure using $BV_\infty$ torsor}

We now use the alternative grading on differential forms described above to construct the symmetric brackets on $\Omega_M[2]$ required for defining an $L_\infty$ structure on $\Omega_M[1]$. Rather than viewing the space of differential forms as a graded algebra as usual, we view it as a graded module for the action of the graded algebra $\Omega_M$, by identifying $\Omega_M$ with the $L$-multivectors $\mathcal{X}_L$ and using the Clifford action described above.  

Recall that if $P$ is a graded module over a graded algebra $A$, a linear map $D:P \to P$ is called a $k^{th}$ order differential operator if $ad_{x_k}\dots ad_{x_0}(D)=0$ for any $x_0, \dots, x_k \in A$, where $ad_x(-)$ is the graded commutator $[-,x]$. Now consider the twisted exterior derivative $d_H$ acting on the $\Omega_M$--module $\Omega$ of differential forms: with respect to this algebra action,  $d_H$ is not guaranteed to be a first order differential operator. However, the identity~\eqref{derbr} implies that  $ad_{x_3}\dots ad_{x_0}(d_H)=0$ for any quadruple of sections $x_0, \dots, x_3$ of $\TT X$; that is, $d_H$ is a $3^{rd}$ order differential operator with respect to the module structure over $\Omega_M$. More precisely, we can say the following.

\begin{lemma} \label{trading_degree_for_order}
The $(3-2k)^{th}$ graded piece of $d_H$ is a $k$-th order differential operator on $\Omega$, viewed as a module over the algebra $\Omega_M$.
\end{lemma}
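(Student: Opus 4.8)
The plan is to leverage the fact, recorded above, that $d_H$ acts on the $\Omega_M$--module $\Omega$ as a differential operator of order at most $3$, and to distribute this order sharply among its four graded pieces $d_{-3},d_{-1},d_1,d_3$. The two ingredients I would use are: (i) the decomposition of the derived-bracket identity \eqref{derbr} according to the $U_\bullet$--grading, and (ii) the elementary observation that, since $L$ and $M$ are isotropic, Clifford multiplication by a section of $L$ is $\Omega_M$--linear whereas Clifford multiplication by a section of $M$ is a differential operator of order $\le 1$ over $\Omega_M$ --- indeed, for $v$ a section of $L$ or of $M$, $\ell\in\Gamma(L)$ and $f\in\mathcal{O}_X$ one has $[v\cdot,\ell\cdot]=2\langle v,\ell\rangle$ and $[v\cdot,f\cdot]=0$, and $\langle L,L\rangle=0$. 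By multiplicativity of the order filtration, Clifford multiplication by a section of $\wedge^jL$ is then $\Omega_M$--linear, and by a section of $\wedge^jM$ is of order $\le j$. Since $\Omega_M$ is generated as an algebra by $\Omega^0_M=\mathcal{O}_X$ (acting by scalars) and $\Omega^1_M\cong\Gamma(L)$ (acting by Clifford multiplication), and the order of an operator is detected by iterated commutators against an algebra generating set, it is enough to bound the iterated commutators of each $d_{3-2k}$ against functions and against sections of $L$.

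I would treat $k=0$ and $k=3$ first, directly from the spinorial description of $d_{\pm3}$: the piece $d_3=N_M$ is Clifford multiplication by the tensor $N_M\in\Omega^3_M$, i.e.\ multiplication by an element of the graded-commutative algebra $\Omega_M$, hence $\Omega_M$--linear (order $0$); symmetrically, $d_{-3}=N_L$ is Clifford multiplication by the section of $\wedge^3M$ corresponding to $N_L\in\Omega^3_L$ under the pairing $L^*\cong M$, hence of order $\le 3$ by ingredient (ii).

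For $k=1$ and $k=2$ I would extract the needed commutators from \eqref{derbr}. Commuting $d_H$ with one function gives $[d_H,f\cdot]=df\wedge\cdot$; decomposing $df=(df)_L+(df)_M$ along $\TT X=L\oplus M$ and matching $U_\bullet$--degrees gives $[d_{\pm3},f\cdot]=0$, $[d_1,f\cdot]=(df)_L\cdot$ and $[d_{-1},f\cdot]=(df)_M\cdot$. Commuting $d_H$ twice with sections $\ell_1,\ell_2\in\Gamma(L)$ gives $[[d_H,\ell_1\cdot],\ell_2\cdot]=[\![\ell_1,\ell_2]\!]_H\cdot$ by \eqref{derbr}, whose right side is Clifford multiplication by an element of $L\oplus M$ and so has $U_\bullet$--degree only $+1$ or $-1$; since $[[d_{3-2k},\ell_1\cdot],\ell_2\cdot]$ has $U_\bullet$--degree $5-2k$, degree-matching gives $[[d_1,\ell_1\cdot],\ell_2\cdot]=0$ and $[[d_{-1},\ell_1\cdot],\ell_2\cdot]=([\![\ell_1,\ell_2]\!]_H)_L\cdot$. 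A short case analysis then finishes: for $d_1$, the double commutator against two $L$--sections vanishes by the latter display, against a function (in either slot) it vanishes because $[d_1,f\cdot]=(df)_L\cdot$ is $\Omega_M$--linear, and the mixed slots reduce to these via the graded Jacobi identity and $[\ell\cdot,f\cdot]=0$, so $d_1$ has order $\le 1$; for $d_{-1}$, the inner double commutator $([\![\ell_1,\ell_2]\!]_H)_L\cdot$ is $\Omega_M$--linear so one further commutator kills it, $[d_{-1},f\cdot]=(df)_M\cdot$ has order $\le 1$ so two further commutators kill it, and the graded Jacobi identity (legitimate since functions are central and $[\ell\cdot,f\cdot]=0$) reduces the remaining mixed cases to these, so $d_{-1}$ has order $\le 2$.

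I expect the degree bookkeeping and the multiplicativity of the order filtration to be routine, and the only genuinely delicate point to be the case $k=0$: identity \eqref{derbr} by itself yields only that $d_3$ has order $\le 1$ over $\Omega_M$, and upgrading this to $\Omega_M$--linearity really does require the identification of $d_3$ with Clifford multiplication by $N_M\in\Omega^3_M\subset\Omega_M$. Everything else drops out mechanically from the graded decomposition of \eqref{derbr} together with the two order observations for Clifford multiplication by sections of $L$ and of $M$.
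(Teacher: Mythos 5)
Your proof is correct, but it takes a genuinely different route from the paper's. The paper's argument rests on a single structural claim about the Fock-space grading \eqref{eq:spinor_grading}: any operator $D$ on $\Omega$ that is of order $\leq k$ with respect to the Clifford action of the \emph{lowering} algebra $\mathcal{X}_M$ has vanishing graded pieces $D_i$ for $i>k$ (one detects a nonzero component $D_i\rho\in U_i$ of $D\rho$, $\rho\in K_M$, by applying $i$ lowering operators $x_i\cdots x_1\in\mathcal{X}^1_M$, all of which annihilate $\rho$, so that $x_i\cdots x_1(D\rho)=(ad_{x_i}\cdots ad_{x_1}D)\rho\neq0$ contradicts the order bound). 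Applying this claim to $ad_{x_k}\cdots ad_{x_0}(d_H)$ with $x_j\in\Omega^1_M$ yields all cases of Lemma~\ref{trading_degree_for_order} uniformly. You instead never leave the module algebra $\Omega_M$: you compute the iterated commutators of each graded piece against the generators $\mathcal{O}_X$ and $\Gamma(L)$ by degree-matching in \eqref{derbr}, which is more elementary and has the side benefit of making the commutators explicit (for instance identifying $[[\del,\ell_1],\ell_2]$ with the $L$-component of $[\![\ell_1,\ell_2]\!]_H$, i.e.\ recovering the derived-bracket description of the Lie algebroid bracket). The price, which you correctly isolate, is the case $k=0$: commutators against $\Omega_M$ alone only show $d_3$ has order $\leq 1$, and you must import the identification $d_3=N_M\cdot$ with $N_M\in\Omega^3_M$ from \eqref{dcomps} (equivalently from \cite{Gua11}). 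That is legitimate within the paper's logical order, since \eqref{dcomps} is stated before the lemma; but note that the paper's claim actually \emph{proves} this tensoriality rather than assuming it --- the extra leverage comes precisely from commuting against the complementary lowering operators in $\mathcal{X}_M$ and using the nondegeneracy of the pairing, which your argument never invokes.
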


\begin{proof}
The key point is the following claim: if a linear operator $D$ on $\Omega$ is a $k^{th}$ order operator with respect to the Clifford action of the algebra of $M$-multivectors 
$\mathcal{X}_M$ on $\Omega$, then the graded pieces $D_i$, $i>k$ 
all vanish. Assuming this claim is true, we finish the proof by applying the claim successively to $D=[d_H,x_0]$ for $k=2$, $D=[[d_H,x_0],x_1]$ for $k=1$, and $D=[[[d_H,x_0],x_1],x_2]$ for $k=0$, where the $x_i$ are elements of $\Omega^1_M$.

To prove the claim, we fix $i>k$ and prove that $D_i$ vanishes on each $U_j$ by induction on $j$. On $U_0$, by contradiction suppose $D_i(\rho)\not= 0$ for some $\rho \in U_0$. Then one can find $x_1,\dots, x_i\in \mathcal{X}^1_M$ such that $x_i\dots x_1 (D\rho) \not=0$. 
Since every $x\in \mathcal{X}^1_M$ annihilates $\rho$, we get $(ad_{x_i}\dots ad_{x_1} D)\rho =x_i\dots x_1 (D\rho)  \not=0$, $i>k$, which contradicts the assumption that $D$ is a $k^{th}$ order differential operator on $\Omega$ viewed as $\mathcal{X}_M$-module (where $\mathcal{X}_M$ is equipped with the opposite grading). 
Proving this step of the induction is done using the same argument as the base case.
\end{proof}
%\begin{remark}
%The Lemma \ref{trading_degree_for_order} generalizes easily to the case where $d$ is replaced by $d+H\wedge$, $H\in \Omega^3$, becuase Cartan commutation relations imply that $ad_{x_3}\dots ad_{x_0}(H\wedge)=0$ for any 
%$x_0, \dots, x_3 \in \Gamma(T\oplus T^*)$.
%\end{remark}

We find it convenient to axiomatize the structure of the $\Omega_M$--module $\Omega$ described above in the following way:

\begin{definition}
An \textbf{almost $BV_\infty$ torsor} $(A, P, \Delta)$ consists of:
\begin{enumerate}
\item $\mathbb{Z}$-graded commutative algebra $A$,
\item a free, rank $1$ graded $A$-module $P$, 
\item an odd map $\Delta:P\to P$ such that for each $k$ the graded piece $\Delta_{3-2k}$ is a $k^{th}$ order differential operator on the $A$-module $P$.
\end{enumerate}
We call $(A, P, \Delta)$ a \textbf{$BV_\infty$ torsor} if $\Delta^2=0$.
\end{definition} 

Given an almost $BV_\infty$ torsor $(A, P, \Delta)$, one can construct a family of symmetric brackets, for $k\ge0$, on $A[2]$ given by the formulas
%\begin{equation}\label{eq:quantized_derived_brackets}
%m_k^\Delta(x_1, x_2, \dots, x_k) \cdot \rho = (ad_{x_k} \dots ad_{x_1} (\Delta_{3-2k}) ) \rho,
%\end{equation}
\begin{equation}\label{eq:quantized_derived_brackets}
m_k^\Delta(x_1, x_2, \dots, x_k) \cdot \rho = [\dots [[\Delta_{3-2k}, x_1], x_2]\dots x_k]  \rho,
\end{equation}
for any $x_1, \dots, x_k \in A$, $\rho \in P$. 
Note that by Lemma~\ref{trading_degree_for_order}, the right hand side of~\eqref{eq:quantized_derived_brackets} is necessarily an order $0$ operator on $P$, and thus is just a multiplication by an element 
in $A$. 
So, the brackets do not depend on $\rho$. Let us denote the corresponding coderivation on $S(A[2])$ by $L_\infty(A , P, \Delta)$, or by $L_\infty(\Delta)$ for short.  Finally, the condition $\Delta^2=0$ implies the required condition $L_\infty(\Delta)^2=0$, as we now show.

\begin{theorem} \label{quantized_derived_brackets_integrability}
If $(A,P,\Delta)$ is a $BV_\infty$ torsor, i.e. if $\Delta^2 =0$, then the brackets $m_k^\Delta$ given above define an $L_\infty$ algebra structure on $A[1]$.
\end{theorem}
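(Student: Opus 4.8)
The plan is to check that the coderivation $L_\infty(\Delta)$ on $S(A[2])$ squares to zero, which by the discussion following Definition~\ref{def:linf2} amounts to verifying that the brackets $m_k^\Delta$ satisfy the generalized Jacobi equations~\eqref{eq:generalized_Jacobi} for every $n\ge 0$. The first thing to pin down is that these are genuine identities \emph{in $A$}: since $P$ is free of rank $1$ over $A$, an order-$0$ differential operator on $P$ is exactly multiplication by a uniquely determined element of $A$, so by Lemma~\ref{trading_degree_for_order} the right-hand side of~\eqref{eq:quantized_derived_brackets} does define $m_k^\Delta(x_1,\dots,x_k)\in A$, and the $n$-th generalized Jacobi equation holds in $A$ as soon as it holds after multiplying by a fixed $\rho\in P$, i.e. as an identity between operators on $P$.

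Next I would translate each composite appearing in~\eqref{eq:generalized_Jacobi} into $\mathrm{End}(P)$. Writing $\hat a$ for multiplication by $a\in A$, formula~\eqref{eq:quantized_derived_brackets} gives $\widehat{m_k^\Delta(x_1,\dots,x_k)} = [\cdots[[\Delta_{3-2k},\hat x_1],\hat x_2]\cdots,\hat x_k]$, so the $n$-th generalized Jacobi expression becomes a signed sum of iterated graded commutators of \emph{two} graded pieces of $\Delta$ — the inner $\Delta_{3-2k}$ and the outer $\Delta_{3-2(n-k+1)}$, whose internal degrees add to $4-2n$ independently of $k$ — together with the $n$ multiplication operators $\hat x_1,\dots,\hat x_n$. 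Two features make this sum collapse. First, $A$ is graded-commutative, so $[\hat x_i,\hat x_j]=0$ and the operators $\mathrm{ad}_{\hat x_i}$ mutually commute; hence, after repeated use of the graded Jacobi identity in $\mathrm{End}(P)$ to move all the $\mathrm{ad}_{\hat x_i}$ to the outside, the whole expression can be rewritten as $\mathrm{ad}_{\hat x_n}\cdots\mathrm{ad}_{\hat x_1}$ applied to a universal combination of the products $\Delta_i\Delta_j$. Second, the order estimates of Lemma~\ref{trading_degree_for_order}, together with the standard fact that a commutator of operators of orders $p$ and $q$ has order at most $p+q-1$, pin down which graded pieces can contribute at each stage and force the surviving combination to be exactly the internal-degree-$(4-2n)$ component of $\Delta^2=\tfrac12[\Delta,\Delta]$ (up to a nonzero scalar and sign), which is then automatically an order-$0$ operator, as it must be.

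Since $\Delta^2=0$, every homogeneous component of $[\Delta,\Delta]$ vanishes, so all the generalized Jacobi equations hold and $L_\infty(\Delta)^2=0$, yielding the claimed $L_\infty$ structure on $A[1]$; the argument uses no integrability assumption, so it covers the curved case $m_0^\Delta\neq 0$ as well. The hard part is the combinatorial bookkeeping in the middle step: matching the Koszul signs and shuffle sums of~\eqref{eq:generalized_Jacobi} with the signs generated by iterated applications of the graded Jacobi identity, and checking that no spurious positive-order terms survive. A convenient way to package this is to test the Jacobi equations on a ``diagonal'' element: for homogeneous $a\in A$ of the appropriate degree, $\sum_k \tfrac1{k!}\,\widehat{m_k^\Delta(a,\dots,a)}$ is, by the computation above, the relevant graded component of the conjugate $e^{-\hat a}\,\Delta\,e^{\hat a}$, whose square is visibly $e^{-\hat a}\,\Delta^2\,e^{\hat a}=0$; since the $m_k^\Delta$ are symmetric and multilinear on $A[2]$, polarizing (in characteristic zero) recovers the full system~\eqref{eq:generalized_Jacobi}. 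Alternatively, one can present the middle step as an instance of Voronov's higher derived bracket theorem for the filtered graded Lie algebra of differential operators on $P$ with its abelian subalgebra $\hat A$ of multiplication operators.
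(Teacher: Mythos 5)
Your proof is correct and follows essentially the same route as the paper: the core step is exactly the paper's identity $J_n^\Delta(x_1,\dots,x_n)=\mathrm{ad}_{x_n}\cdots \mathrm{ad}_{x_1}\bigl((\Delta^2)_{4-2n}\bigr)$ (which the paper, like you, attributes to the standard higher-derived-brackets computation of Voronov et al.), combined with the observation that this graded piece of $\Delta^2$ has order $n$ so the iterated adjoint lands in $A$. Your polarization/exponential packaging of the combinatorics is a nice way to organize the sign bookkeeping that the paper leaves to the references, but it is not a different argument.
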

\begin{proof}
For $k\ge1$, let $J_k^\Delta(x_1,\dots,x_k)$ denote the $k^{th}$ Jacobiator, i.e. the left hand side of the generalized Jacobi equations \eqref{eq:generalized_Jacobi}, for the $L_\infty$ structure given by the brackets $m_k^\Delta$.
It is a standard computation (see e.g. \cite{BDA,Vor}) 
to check that 
\begin{equation}\label{jacobi}
J_k^\Delta(x_1, \dots, x_k) = ad_{x_k} \dots ad_{x_1} \left((\Delta^2)_{4-2k}\right).
\end{equation}
Note that the $(4-2k)$-th graded piece of $\Delta^2=\frac{1}{2}[\Delta,\Delta]$ is a $k^{th}$ order differential operator, so the quantity on the right hand side of~\eqref{jacobi} defines an element in $A$.
Therefore, the identity $\Delta^2 =0$ guarantees vanishing of all the Jacobiators $J_k^\Delta$, $k\ge1$.
\end{proof}

\begin{remark}
The construction procedure above is a version of Voronov's derived bracket construction~\cite{Vor}, described also in the papers 
\cite{BDA,Kra00,KS04}. The particular formalism we are using is quite close to the notion of $BV_\infty$-algebra (see \cite[Definition 1.2]{BaVo}), with the following slight modifications: 
\begin{itemize}
 \item[--] We allow the $L_\infty$ algebra to have curvature, i.e. a zero-arity bracket, so the operator $\Delta$ is allowed to have a component of degree $+3$.
 \item[--] More importantly, the operator $\Delta$ does not act on the algebra $A$ itself, but rather on a free rank $1$ $A$-module $P$. 
\end{itemize}
\end{remark} 

\begin{remark}
The $L_\infty$ structure obtained by this procedure is compatible with the wedge product on $\Omega_M$, giving rise to a
$G_\infty$-algebra (see \cite{GV95,Tam98}). 
\end{remark} 

\begin{example}[$L_\infty$ algebra structure on de Rham complex of a Dirac structure]
Our main application of the derived bracket construction is to the $BV_\infty$ torsor $(\Omega_M\cong \mathcal{X}_L , \Omega, d_H)$ associated to the almost Dirac structure $M$ and the complementary almost Dirac structure $L$. The resulting $L_\infty$ algebra structure on $\Omega_M[1]$ may be written in terms of a sequence of symmetric brackets on $\Omega_M[2]$ as described in Section~\ref{linfsym}, and these may be written in terms of the graded components of the twisted de Rham differential $d_H$ as follows:
\begin{equation}\label{eq:derived_brackets}
\begin{array}{lll}
m_0 \cdot \rho &= N_M \cdot \rho, \\
m_1(\alpha) \cdot \rho &= [ \delbar, \alpha ] \cdot \rho, \\
m_2(\alpha, \beta) \cdot \rho &= [[ \del, \alpha ], \beta] \cdot \rho, \\
m_3(\alpha, \beta, \gamma) \cdot \rho &= [[[ N_L, \alpha ], \beta], \gamma ]\cdot \rho. 
\end{array}
\end{equation}
Note that we get a flat $L_\infty$ algebra on $\Omega_M[1]$ precisely when $N_M$ vanishes, i.e. when  $M$ is involutive.  Also, when both $L$ and $M$ are integrable, $m_3$ vanishes and we obtain the DGLA structure 
$(\Omega_M[1], d_M, [\cdot,\cdot]_L)$ described in Section~\ref{dgladir}.
\end{example}

\subsection{Construction of $L_\infty$ morphism using $BV_\infty$ torsor}

Finally, consider a pair $L, L'$ of almost Dirac structures transverse to $M$. We wish 
to compare the corresponding pair of $L_\infty$ structures obtained on $\Omega_M[1]$ from the construction in the previous section.
%Two splittings $M\oplus L$ and $M\oplus L'$ of the generalized tangent bundle $T\oplus T^*$ produce two gradings on the space of de Rham forms $\Omega=\bigoplus_{i=0}^n U_n=\bigoplus_{i=0}^n U_n^\prime$
We continue to use the notation $\Omega$ for the differential forms on $X$ equipped with the alternative grading coming from the splitting $\TT X = M\oplus L$, and we use  $\Omega'$ to denote the grading derived from the splitting $\TT X = M\oplus L'$. The two $L_\infty$ algebras in question arise from the $BV_\infty$ torsors $(\Omega_M\cong \mathcal{X}_L , \Omega, d_H)$ and $(\Omega_M\cong \mathcal{X}_{L'} , \Omega', d_H)$. 

Fix $\varepsilon\in\mathcal{X}^2_M$ such that $L'$ is the graph of $\varepsilon$ viewed as a skew map $L\to M$. The tensor $\eps$ acts on $\Omega$ via the Clifford action; exponentiating this, we obtain an isomorphism  $e^{-\varepsilon }:\Omega' \to \Omega$ between the $BV_\infty$ torsors $(\Omega_M, \Omega', d_H)$ and $(\Omega_M, \Omega, e^{-\varepsilon}d_He^{\varepsilon })$, in the sense that $e^{-\varepsilon}$ is an equivariant map of $\Omega_M$-modules and the following diagram commutes:
$$
\xymatrix{
\Omega' \ar[r]^{ e^{-\varepsilon} }\ar[d]_-{d_H}& \Omega\ar[d]^-{ e^{-\varepsilon}d_{H}e^{\varepsilon }}\\
\Omega' \ar[r]^{e^{-\varepsilon}} & \Omega
}
$$
It it transparent from the definitions that isomorphic $BV_\infty$ torsors result in identical $L_\infty$ structures via Theorem~\ref{quantized_derived_brackets_integrability}. Therefore, we may trade the $BV_\infty$ torsor $(\Omega_M, \Omega', d_H)$ for the isomorphic $BV_\infty$ torsor $(\Omega_M, \Omega, e^{-\varepsilon}d_He^{\varepsilon })$, and now our question is reduced to the comparison of $L_\infty$ structures $L_\infty(\Omega_M, \Omega, e^{-\varepsilon}d_{H}e^{\varepsilon })$ and $L_\infty(\Omega_M, \Omega, d_H)$.

\begin{theorem}\label{Thm:derived_bracket_construction_preserves_gauge_action}
Let $\varepsilon \in \mathcal{X}^2_M$.
There is a canonical $L_\infty$-isomorphism between the $L_\infty$ algebras $L_\infty(\Omega_M, \Omega, e^{-\varepsilon}d_He^{\varepsilon })$ and $L_\infty(\Omega_M, \Omega, d_H)$, given by $e^{R_{\varepsilon}}$, the exponential of the coderivation $R_\varepsilon$ given by the commutator $[\iota_\varepsilon,\mu]$ (see Equation~\ref{eqn:exp_coder}). In other words, we have 
\begin{equation}\label{epsilon_vs_R}
L_\infty(e^{-\varepsilon} d_H e^{\varepsilon}) = e^{-R_\varepsilon} L_\infty(d_H) e^{R_{\varepsilon}}.
\end{equation}
\end{theorem}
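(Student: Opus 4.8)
The plan is to establish \eqref{epsilon_vs_R} by exploiting the fact that both sides are coderivations on $S(\Omega_M[2])$, so it suffices to check that the corresponding operators on the module $\Omega$ agree. More precisely, I would first reformulate the claim at the level of the $BV_\infty$ torsor: the coderivation $R_\varepsilon=[\iota_\varepsilon,\mu]$ should be the $L_\infty$-incarnation of the operator $\varepsilon\cdot(-)$ on $\Omega$ (Clifford contraction by $\varepsilon$), in the precise sense that for any odd operator $D$ on $\Omega$ whose graded pieces have the order prescribed by Lemma~\ref{trading_degree_for_order}, one has
\begin{equation*}
L_\infty(A,P,e^{-\varepsilon}De^{\varepsilon}) = e^{-R_\varepsilon}\, L_\infty(A,P,D)\, e^{R_\varepsilon}.
\end{equation*}
Granting this, the theorem follows by taking $D=d_H$. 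To prove this reformulation, the natural route is to differentiate in a parameter: replace $\varepsilon$ by $t\varepsilon$, set $D_t=e^{-t\varepsilon}D e^{t\varepsilon}$, and verify that $\tfrac{d}{dt}L_\infty(D_t) = [L_\infty(D_t), R_\varepsilon]$ as coderivations. Since $\tfrac{d}{dt}D_t = [D_t,\varepsilon\cdot]$ (an infinitesimal gauge transformation on $\Omega$), this reduces to a single infinitesimal statement: the map sending an operator $D$ to its associated coderivation $L_\infty(D)$ intertwines the adjoint action of $\varepsilon\cdot$ on operators with the adjoint action of $R_\varepsilon$ on coderivations.

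The key computation is therefore the infinitesimal identity. Here I would use the explicit formula \eqref{eq:quantized_derived_brackets}: the brackets $m^D_k$ are iterated commutators $[\cdots[[D_{3-2k},x_1],x_2]\cdots,x_k]$, and one must show that commuting $D$ with $\varepsilon\cdot$ produces exactly the correction terms generated by $R_\varepsilon=[\iota_\varepsilon,\mu]$ acting through the $L_\infty$ structure. The point is that $\varepsilon\cdot$, as an operator on the module $\Omega$, is itself a second-order differential operator with respect to the $\Omega_M$-module structure (it lowers the alternative degree by $2$, matching the pattern of Lemma~\ref{trading_degree_for_order} with $k=2$), and the coderivation it induces via the same derived-bracket recipe is precisely $R_\varepsilon$: the commutator $[\iota_\varepsilon,\mu]$ records the interaction of contraction-by-$\varepsilon$ with the algebra multiplication $\mu$, which is exactly the content of $\varepsilon\cdot$ failing to be a derivation. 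Thus the whole statement is a manifestation of the fact that the derived-bracket construction $D\mapsto L_\infty(D)$ is a morphism of (graded) Lie algebras from suitable operators on $P$ to coderivations on $S(A[2])$, at least on the relevant subspace; this is the same standard computation behind \eqref{jacobi} in the proof of Theorem~\ref{quantized_derived_brackets_integrability}, applied to $[\Delta,\varepsilon\cdot]$ instead of $[\Delta,\Delta]$.

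I expect the main obstacle to be bookkeeping: verifying that the iterated-commutator formula for $m^D_k$ behaves correctly under both $\mathrm{ad}_{\varepsilon\cdot}$ (on the operator side) and $\mathrm{ad}_{R_\varepsilon}$ (on the coderivation side), keeping careful track of Koszul signs and of how $R_\varepsilon$ distributes over a symmetric word $x_1\odot\cdots\odot x_k$ via the Leibniz rule for coderivations. In particular one must confirm that the ``extra'' term where $R_\varepsilon$ hits one of the $x_i$ (contracting and wedging within the arguments) matches a term arising from $\mathrm{ad}_{\varepsilon\cdot}$ redistributing among the commutators, while the term where $R_\varepsilon$ acts at the root reproduces the shift $D_{3-2k}\rightsquigarrow [D,\varepsilon\cdot]$ in the leading operator. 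Once the infinitesimal identity $\tfrac{d}{dt}L_\infty(D_t)=[L_\infty(D_t),R_\varepsilon]$ is in hand, integrating it over $t\in[0,1]$ — using that $R_\varepsilon$ is $t$-independent and that $e^{tR_\varepsilon}$ converges on each $S^{\le n}$ since $\iota_\varepsilon$ and $\mu$ are each locally nilpotent in the relevant sense — yields \eqref{epsilon_vs_R} and hence the canonical $L_\infty$-isomorphism.
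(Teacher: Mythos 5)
Your argument is essentially the paper's: both reduce the exponentiated identity \eqref{epsilon_vs_R} to the single linearized statement $L_\infty([d_H,\varepsilon])=[L_\infty(d_H),R_\varepsilon]$ (you via the flow $D_t=e^{-t\varepsilon}De^{t\varepsilon}$, the paper by expanding both sides as series of iterated adjoint actions), both observe that $\varepsilon$ acts as a second-order degree $-2$ operator so that the derived-bracket construction still applies to the commutators, and both leave the remaining Koszul-sign bookkeeping for the linearized identity as a direct computation. This matches the paper's proof in substance and level of detail.
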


\begin{proof}
We can expand the both sides of \eqref{epsilon_vs_R} as a series of iterated adjoint actions. So, it is enough to show, that $L_\infty([d_H,\varepsilon])=[L_\infty(d_H),R_\varepsilon]$ (note that since $\varepsilon$ is a $2^{nd}$ order degree $-2$ operator on $\Omega$, the tuple $(\Omega_M, \Omega, [d_H,\varepsilon])$ forms an almost $BV_\infty$ torsor). Checking the latter identity is a straightforward computation and is left to the reader.\end{proof}
%In case we want to include the details
%In order to check $\widetilde{[d,\varepsilon]}=[\widetilde{d},R_\varepsilon]$, we do the computation for the corresponding brackets. 
%For simplicity, we restrict to the case of coinciding even arguments $x$:\marco{formatting of equations below OK for notes, not for paper - clean this up.  Only key steps should be shown and text should not be injected into the equations}

%\begin{gather*}
%\{\underbrace{x,\dots,x}_{k}\}_{[d,\varepsilon]} = ad_x^k\big([d,\varepsilon]_{3-2k}\big)=
%ad_x^k\big([d_{5-2k},\varepsilon]\big) = \\
%\sum_{i=0}^k \pm \binom{k}{i} \Big[ ad_x^i(d_{5-2k}), ad_x^{k-i}(\varepsilon)  \Big] = \\
%|\text{as $\varepsilon$ is a $2^{nd}$ order differentail operator, only two summands survive }| = \\
%\pm\binom{k}{1}\Big[ ad_x^{k-1}(d_{5-2k}), ad_x(\varepsilon)  \Big] \pm
%\binom{k}{2}\Big[ ad_x^{k-2}(d_{5-2k}), ad_x^2(\varepsilon)  \Big],
%\end{gather*}
%where $\pm$ stands for the Koszul sign. On the other hand, we have
%\begin{gather*}
%[\widetilde{d}, R_\varepsilon] (x^{\odot k}) = \widetilde{d}(R_\varepsilon(x^{\odot k})) - R_\varepsilon(\widetilde{d}(x^{\odot k})) =\\
%\pm \binom{k}{2} \widetilde{d}(ad_x^2(\varepsilon) \odot x^{\odot k-2})
%\mp \binom{k}{1} R_\varepsilon(ad_x^{k-1}(d_{3-2(k-1)}) \odot x) = \\
%\pm \binom{k}{2} \Big[ad_x^{k-2}(d_{3-2(k-1)}), ad_x^2(\varepsilon) \Big]
%\mp \binom{k}{1} \Big[ad_x(\varepsilon),ad_x^{k-1}(d_{3-2(k-1)})\Big] = \\
%\pm \binom{k}{2} \Big[ad_x^{k-2}(d_{3-2(k-1)}), ad_x^2(\varepsilon) \Big]
%\pm \binom{k}{1} \Big[ad_x^{k-1}(d_{3-2(k-1)}),ad_x(\varepsilon)\Big]
%\end{gather*}

\begin{corollary}\label{cor:independence_of_L_infty_from_the_choice_of_complement}
 Let $M$ and $L$ be a pair of transverse maximal isotropic subbundles of an exact Courant algebroid $E$, 
and let $L'=e^\varepsilon L$ be another maximal isotropic subbundle transverse to $M$. 
Then the $L_\infty$ structures on $\Omega_M[1]$ given by the pairs $(M,L')$ and $(M,L)$ are $L_\infty$-isomorphic via the $L_\infty$ map $e^{R_\varepsilon}$.
\end{corollary}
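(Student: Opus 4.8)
The plan is to deduce Corollary~\ref{cor:independence_of_L_infty_from_the_choice_of_complement} from Theorem~\ref{Thm:derived_bracket_construction_preserves_gauge_action} by chaining together the two isomorphisms already assembled in the text: the $BV_\infty$-torsor isomorphism $e^{-\varepsilon}$ that identifies the torsor $(\Omega_M,\Omega',d_H)$ with $(\Omega_M,\Omega,e^{-\varepsilon}d_He^{\varepsilon})$, and the $L_\infty$-isomorphism $e^{R_\varepsilon}$ of Theorem~\ref{Thm:derived_bracket_construction_preserves_gauge_action} relating $L_\infty(\Omega_M,\Omega,e^{-\varepsilon}d_He^{\varepsilon})$ to $L_\infty(\Omega_M,\Omega,d_H)$. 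First I would record that since $L'=e^\varepsilon L$ is transverse to $M$, there is indeed a unique $\varepsilon\in\mathcal X^2_M$ realizing $L'$ as the graph of $\varepsilon:L\to M$; this is the datum fed into Theorem~\ref{Thm:derived_bracket_construction_preserves_gauge_action}. Then I would invoke the remark in the text that isomorphic $BV_\infty$ torsors produce \emph{identical} $L_\infty$ structures — the brackets \eqref{eq:quantized_derived_brackets} are defined purely in terms of graded commutators of the operator with the algebra action, and an equivariant module isomorphism intertwining the operators carries one bracket system to the other verbatim — so that $L_\infty(\Omega_M,\Omega',d_H)=L_\infty(\Omega_M,\Omega,e^{-\varepsilon}d_He^{\varepsilon})$ as $L_\infty$ structures on $\Omega_M[1]$.

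Next I would combine this equality with \eqref{epsilon_vs_R}. The $L_\infty$ structure attached to $(M,L')$ is, by the Example on the de Rham complex and the torsor identification above, exactly $L_\infty(\Omega_M,\Omega,e^{-\varepsilon}d_He^{\varepsilon})$, while the structure attached to $(M,L)$ is $L_\infty(\Omega_M,\Omega,d_H)$. Theorem~\ref{Thm:derived_bracket_construction_preserves_gauge_action} states $L_\infty(e^{-\varepsilon}d_He^{\varepsilon})=e^{-R_\varepsilon}L_\infty(d_H)e^{R_\varepsilon}$, i.e. the coderivation $Q'=e^{-R_\varepsilon}Q e^{R_\varepsilon}$, where $Q,Q'$ are the codifferentials on $S(\Omega_M[2])$ encoding the two $L_\infty$ structures. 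Conjugation by the invertible coalgebra endomorphism $e^{R_\varepsilon}$ is precisely what it means for $e^{R_\varepsilon}$ to be an $L_\infty$-isomorphism from $(\,S(\Omega_M[2]),Q'\,)$ to $(\,S(\Omega_M[2]),Q\,)$: it is a morphism of graded coalgebras (being the exponential of a coderivation) carrying one differential to the other. I would also check the bookkeeping that $R_\varepsilon$ has degree $0$ — $\iota_\varepsilon$ lowers form-degree by $2$ and, after the degree shift $\Omega_M[2]$, has the right degree, $\mu$ is degree $0$, so the commutator is degree $0$ — hence $e^{R_\varepsilon}$ is an $L_\infty$-isomorphism rather than merely a morphism. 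This gives exactly the claimed statement: the $L_\infty$ structures on $\Omega_M[1]$ from $(M,L')$ and $(M,L)$ are $L_\infty$-isomorphic via $e^{R_\varepsilon}$.

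Strictly speaking the corollary is almost a restatement of the theorem once the $BV_\infty$-torsor reduction is in place, so the genuine mathematical content has already been discharged; the only thing to be careful about is that the two reductions compose correctly and that one does not introduce a sign or a direction error in which of $L$, $L'$ plays which role — i.e. that $e^{R_\varepsilon}$, and not $e^{-R_\varepsilon}$, is the map from the $(M,L')$-structure to the $(M,L)$-structure, consistent with the convention that $L'=e^\varepsilon L$ and with the Maurer-Cartan formula \eqref{keymcmap}. I expect the main (mild) obstacle to be precisely this matching of conventions across the three descriptions — the graph parametrization $L'=e^\varepsilon L$, the module gauge transformation $e^{-\varepsilon}$ on $\Omega$, and the coalgebra conjugation by $e^{R_\varepsilon}$ — rather than any substantive computation, since the hard identity $L_\infty([d_H,\varepsilon])=[L_\infty(d_H),R_\varepsilon]$ is already granted by Theorem~\ref{Thm:derived_bracket_construction_preserves_gauge_action}.
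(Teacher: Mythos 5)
Your proposal is correct and follows essentially the same route as the paper: the paper gives no separate proof of the corollary precisely because it is the combination of the preceding paragraph (the $BV_\infty$-torsor isomorphism $e^{-\varepsilon}:\Omega'\to\Omega$ and the observation that isomorphic torsors yield identical $L_\infty$ structures) with the conjugation identity of Theorem~\ref{Thm:derived_bracket_construction_preserves_gauge_action}, which is exactly the chain you assemble. Your additional checks on the degree of $R_\varepsilon$ and on the direction of the isomorphism are sensible bookkeeping but do not constitute a departure from the paper's argument.
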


\begin{remark}
The theorem above is closely related to the results of \cite{CaSch}, where the authors establish a sufficient condition for
equivalence of $L_\infty$ structures produced by Voronov's method. They are able to apply their general condition to obtain a version of the above theorem for regular Dirac structures, i.e. Dirac structures $M$  for which $M\cap T^*X$ has constant rank. 
Our method is different, using the notion of $BV_\infty$ torsor, and we conclude the more general result for arbitrary (and possibly non-involutive) Dirac structures.  
%We also note that our argument does not directly involve a continuous interpolation between the complementary Dirac structures $L, L'$.
\end{remark}

\section{Action of the {$L_\infty$} map on Maurer-Cartan elements}\label{section:action}
Suppose $M$ is a Dirac structure and $L, L'$ are Dirac structures transverse to $M$, giving rise to two DGLA structures $(\Omega_M[1], d_M, [\cdot ,\cdot ]_{L})$ and 
$(\Omega_M[1], d_M, [\cdot , \cdot]_{L'})$.  As before, let $\varepsilon\in\mathcal{X}^2_M$ such that $L'$ is the graph of $\varepsilon$ viewed as a skew map $L\to M$.
Theorem~\ref{Thm:derived_bracket_construction_preserves_gauge_action} and Corollary~\ref{cor:independence_of_L_infty_from_the_choice_of_complement} provide a $L_\infty$-isomorphism from the former to the latter DGLA, given by the exponential of the coderivation $R_\varepsilon=[\iota_\varepsilon,\mu]$, where $\iota_{\varepsilon}$ and $\mu$ are the coderivations of 
$S(\Omega_M[2])$ given by contraction with $\varepsilon$ and the wedge product on $\Omega_M$, respectively.  In this section we compute the action of this $L_\infty$--isomorphism on Maurer-Cartan elements.

\begin{lemma}\label{R_explicit}
For $2$-forms $\omega_1,\omega_2 \in \Omega_M^2$, we have
$$
R_\varepsilon (\omega_1 \odot \omega_2) = \omega_1 \varepsilon \omega_2 + \omega_2 \varepsilon \omega_1,
$$
where on the right hand side we view $\omega_1,\omega_2$ as maps $M\to L$, and $\varepsilon$ as a map $L\to M$.
\end{lemma}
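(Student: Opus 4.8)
The plan is to unwind all the definitions. The coderivation $\iota_\varepsilon$ on $S(\Omega_M[2])$ is induced by the arity-one operator $\alpha\mapsto\iota_\varepsilon\alpha$ on $\Omega_M$, while $\mu$ is induced by the arity-two operation $\alpha\odot\beta\mapsto\alpha\wedge\beta$. Thus on $S^2(\Omega_M[2])$ the commutator $R_\varepsilon=[\iota_\varepsilon,\mu]=\iota_\varepsilon\mu-\mu\iota_\varepsilon$ acts as a map $S^2(\Omega_M[2])\to\Omega_M[2]$. First I would compute $\mu(\omega_1\odot\omega_2)=\omega_1\wedge\omega_2$ and then $\iota_\varepsilon(\omega_1\wedge\omega_2)$; since $\varepsilon$ is a bivector, contraction with it is a second-order operator on the exterior algebra, so $\iota_\varepsilon(\omega_1\wedge\omega_2)=(\iota_\varepsilon\omega_1)\wedge\omega_2+\omega_1\wedge(\iota_\varepsilon\omega_2)+\langle$cross term$\rangle$. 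The cross term is exactly what survives after subtracting $\mu\iota_\varepsilon(\omega_1\odot\omega_2)=(\iota_\varepsilon\omega_1)\wedge\omega_2+\omega_1\wedge(\iota_\varepsilon\omega_2)$ (keeping careful track of the Koszul sign coming from the degree shift $[2]$ and the fact that $\iota_\varepsilon$ has degree $-2$, hence even, so no sign subtleties from re-ordering $\omega_1,\omega_2$). Hence $R_\varepsilon(\omega_1\odot\omega_2)$ equals precisely that cross term, a single $1$-form.

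The second and main step is to identify this cross term with $\omega_1\varepsilon\omega_2+\omega_2\varepsilon\omega_1$ under the stated identifications. Here I would work pointwise and pick a local frame, writing $\varepsilon=\tfrac12\varepsilon^{ij}e_i\wedge e_j$ for a frame $e_i$ of $M$ (equivalently, thinking of $\varepsilon$ as a skew map $L\cong M^*\to M$), and $\omega_a=\tfrac12(\omega_a)_{kl}e^k\wedge e^l$. The second-order part of $\iota_\varepsilon$ applied to a wedge of two $1$-forms-valued-in-$\Omega^2$ pairs one index of $\omega_1$ and one index of $\omega_2$ against the two legs of $\varepsilon$, leaving one free index on each of $\omega_1,\omega_2$; symmetrizing over which factor contributes which leg produces the two terms. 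Tracking the combinatorial coefficient and the sign shows this is the contraction $(\omega_1)_{\,\cdot\, i}\varepsilon^{ij}(\omega_2)_{j\,\cdot\,}$ plus the same with $\omega_1,\omega_2$ swapped, which is exactly the matrix-composition expression $\omega_1\varepsilon\omega_2+\omega_2\varepsilon\omega_1$ once one recalls the identification $\langle\widetilde\omega_a u,v\rangle=\omega_a(u,v)$ from the discussion around Equation~\ref{keymcmap}.

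The step I expect to be the genuine obstacle is bookkeeping: getting every sign right, given the degree-$2$ shift on $\Omega_M$, the odd/even parities of $\iota_\varepsilon$ and $\mu$ as coderivations, and the conventions relating the symmetric brackets $m_k$ to contractions via Equation~\ref{eq:symm_to_skew}. In particular one must be careful that, although $\iota_\varepsilon$ lowers form-degree by $2$ and is therefore \emph{even}, the coderivation $\mu$ has its own intrinsic degree, and the graded commutator $[\iota_\varepsilon,\mu]$ must be expanded with the correct sign; and that the "coderivation induced by" construction from Section~\ref{linfsym} contributes Koszul signs when the operator is moved past the spectator factor. Once the degrees are tabulated the computation is short, so I would present it by first recording the parities, then doing the two-term Leibniz expansion of $\iota_\varepsilon$ on a product, and finally reading off the coefficient in a local frame; symmetry of the answer in $\omega_1\leftrightarrow\omega_2$ is automatic since $R_\varepsilon$ descends from $S^2$, which is a useful consistency check.
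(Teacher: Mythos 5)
Your proposal is correct and follows essentially the same route as the paper: expand the commutator $[\iota_\varepsilon,\mu]$ on $S^2(\Omega_M[2])$ so that only the second-order ``cross term'' of $\iota_\varepsilon$ survives, then identify that cross term with the composition $\omega_1\varepsilon\omega_2+\omega_2\varepsilon\omega_1$. The only cosmetic difference is that the paper verifies the cross-term identity invariantly, by contracting with a test vector $X\in M$ and using the derivation property of $\iota_X$, whereas you propose a local-frame index computation.
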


\begin{proof} 
Computing the action of $R_\varepsilon$, we obtain
\[
R_\varepsilon (\omega_1 \odot \omega_2) = i_\varepsilon(\omega_1)\wedge \omega_1 + \omega_1 \wedge i_\varepsilon(\omega_2) 
- i_\varepsilon(\omega_1\wedge \omega_2).
\]
We then use the identity $i_\varepsilon(\omega_1\wedge \omega_2) = (i_\varepsilon \omega_1) \omega_2 + (i_\varepsilon \omega_2) \omega_1 - \omega_2\varepsilon\omega_1 - \omega_1\varepsilon\omega_2$, 
obtained by evaluating the left hand side on $X\in M$:
\begin{align*}
i_Xi_\varepsilon(\omega_1\wedge \omega_2) &= i_\varepsilon((i_X \omega_1) \wedge \omega_2 +  (i_X \omega_2)\wedge \omega_1)\\
&= (i_X \omega_1) (i_\varepsilon \omega_2) + (i_X \omega_2) (i_\varepsilon \omega_1) - i_{(i_{\omega_1(X)} \varepsilon)} \omega_2 - i_{(i_{\omega_2(X)} \varepsilon)} \omega_1 \\
&= i_X((i_\varepsilon \omega_2) \omega_1 + (i_\varepsilon \omega_1)\omega_2 - \omega_2\varepsilon\omega_1 - \omega_1\varepsilon\omega_2).
\end{align*}
\end{proof}

\begin{theorem}\label{mccomp}
Let $\omega$ be a Maurer-Cartan element in the DGLA $(\Omega_M[1], d_M, [~,~]_L)$. 
Let $\varepsilon=\varepsilon_1t+\varepsilon_2t^2+... \in t\Omega^2_L[[t]]$ define a formal
deformation of the Dirac structure $L$.

Then the $L_\infty$ map $f=e^{R_\varepsilon}$ sends $\omega$ to $B =  \sum_{n=0}^\infty \omega (\varepsilon \omega)^n$. 
Moreover, if the series $\sum_{n\ge1} \varepsilon_nt^n$ is convergent and $t$ is so small that $Id-\varepsilon \omega$ is invertible, 
then the resulting series for $B$ will be convergent and $B = \omega (Id-\varepsilon \omega)^{-1}$.
\end{theorem}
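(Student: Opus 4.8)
The plan is to compute the action of the $L_\infty$-morphism $f = e^{R_\varepsilon}$ on a Maurer--Cartan element directly, using the fact that an $L_\infty$-morphism $F: (S(V[1]), Q_V) \to (S(W[1]), Q_W)$ sends Maurer--Cartan elements to Maurer--Cartan elements, and that on the level of the components $\{f_n\}$ the image of a Maurer--Cartan element $\omega$ is $B = \sum_{n\ge 1} \tfrac{1}{n!} f_n(\omega, \dots, \omega)$. So the first step is to extract from the coderivation $e^{R_\varepsilon}$ its Taylor components $f_n : S^n(\Omega_M[2]) \to \Omega_M[2]$ when evaluated on a single repeated argument $\omega \in \Omega^2_M$, which amounts to understanding the ``corolla'' part of the forest expansion of $e^{R_\varepsilon}$: the piece of $(R_\varepsilon)^k / k!$ that lands in $S^1$. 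Lemma~\ref{R_explicit} already tells us that $R_\varepsilon(\omega \odot \omega) = 2\,\omega \varepsilon \omega$ when both arguments are equal, and more generally $R_\varepsilon(\omega_1 \odot \omega_2) = \omega_1 \varepsilon \omega_2 + \omega_2 \varepsilon \omega_1$; the key structural input is that $R_\varepsilon$ lowers symmetric degree by exactly one, so its iterated application to $\omega^{\odot n}$ eventually reaches $S^1$, contributing to $f_n$.

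Second, I would set up the generating-function bookkeeping. Working with the formal parameter $t$ (so $\varepsilon \in t\,\Omega^2_L[[t]]$ and we may treat $\varepsilon$ as ``small''), consider the sum $B = \sum_n \tfrac{1}{n!} f_n(\omega,\dots,\omega)$ obtained by pushing $e^\omega \in S(\Omega_M[2])$ (the exponential group-like element, which encodes a Maurer--Cartan element) through $e^{R_\varepsilon}$ and reading off the $S^1$-component. The cleanest route is to observe that $e^{R_\varepsilon}$ acting on group-like elements can be computed at the ``1-particle'' level: because $R_\varepsilon$ is the coderivation induced by the quadratic map $C(\omega_1,\omega_2) = \omega_1\varepsilon\omega_2 + \omega_2\varepsilon\omega_1$, the iterated brackets produce precisely the associativity-patterned products $\omega\varepsilon\omega\varepsilon\cdots\varepsilon\omega$, and a counting argument (the $n!$ in $\tfrac{1}{n!}(R_\varepsilon)^n$ cancelling the number of ways to build a given chain from a binary-composition forest) collapses the whole sum to $\sum_{n\ge 0} \omega(\varepsilon\omega)^n$. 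This is exactly the geometric-series pattern of Equation~\eqref{keymcmap}, with $\varepsilon$ here playing the role of $\eps$ there, so the identification $B = \sum_{n\ge0}\omega(\varepsilon\omega)^n$ falls out, matching the main motivating computation.

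Third, for the convergence statement: once $B$ is identified as the formal series $\sum_{n\ge0} \omega(\varepsilon\omega)^n$, this is a genuine geometric (Neumann) series in the endomorphism $\varepsilon\omega$ of the (finite-rank, over each point) bundle $M$. If $\sum_n \varepsilon_n t^n$ converges and $t$ is small enough that $\mathrm{Id} - \varepsilon\omega$ is invertible as a bundle endomorphism — equivalently, that the operator norm of $\varepsilon\omega$ is less than $1$ — then the Neumann series converges and sums to $(\mathrm{Id} - \varepsilon\omega)^{-1}$, giving $B = \omega(\mathrm{Id} - \varepsilon\omega)^{-1}$. I would phrase this pointwise on $X$ (or uniformly on compact sets), since $M$ is a finite-rank bundle and there are no derivatives of $\omega$ or $\varepsilon$ appearing in the formula for $B$ — it is purely a pointwise tensorial/algebraic expression.

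The main obstacle I expect is the combinatorial step: carefully matching the forest/tree expansion of $\tfrac{1}{n!}(R_\varepsilon)^n$ restricted to the $S^1$-component (with all its Koszul signs) against the single monomial $\omega(\varepsilon\omega)^{n-1}$, and verifying that the multiplicities and signs conspire to give coefficient exactly $1$ for each $n$. Since all the $\omega$'s are equal and $\omega \in \Omega^2_M$ has odd degree in $\Omega_M[2]$ (degree $1$ after the shift), one must track the Koszul signs from permuting the $\omega$-slots; the cleanest way around this is to exploit that, for a group-like element, the sign bookkeeping is forced and one is really just counting binary trees with $n$ leaves, whose number ($C_{n-1}$ Catalan) times the appropriate symmetry factors reassembles into the flat geometric series — this is the standard mechanism by which $e^{R_\varepsilon}$ implements the Möbius-type transformation $\omega \mapsto \omega(\mathrm{Id}-\varepsilon\omega)^{-1}$, and tying it precisely to Lemma~\ref{R_explicit} is where the real work lies.
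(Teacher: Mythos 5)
Your proposal is correct and follows essentially the same route as the paper: extract the Taylor components $f_n$ of $e^{R_\varepsilon}$, use Lemma~\ref{R_explicit} to identify $f_n(\omega_1\odot\cdots\odot\omega_n)$ with the sum of chains $\omega_{\sigma(1)}\varepsilon\cdots\varepsilon\omega_{\sigma(n)}$ over permutations, observe that the $n!$ cancels the $1/n!$ when all arguments equal $\omega$, and conclude with the Neumann series for convergence. (The sign bookkeeping is in fact trivial here, since $\omega\in\Omega^2_M$ has degree $0$ in $\Omega_M[2]$, and the relevant multiplicity count is the $(n-1)!$ orderings of the binary compositions rather than a Catalan number, but these are cosmetic points.)
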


\begin{proof}

Let $V:=\Omega_M[2]$, and let us denote the component $S^n(V)\to V$ of the coalgebra automorhism $e^{R_\varepsilon}$ by $f_n$. By definition
$$
e^{R_\varepsilon}(\omega)=\sum_{n\ge 1} \frac{1}{n!} f_n(\omega^{\odot n}).
$$

Note that $R_\varepsilon:S^2V \to V$, so $f_1=Id$. 
Next, Lemma \ref{R_explicit} implies that for an $n$-tuple of $2$-forms $\omega_i \in \Omega_M$, $n\ge2$
\begin{equation}\label{R_explicit_upgrade}
f_n(\omega_1 \odot \dots \odot \omega_n) = \
\sum_{\sigma \in S_n} \omega_{\sigma(1)} \varepsilon \omega_{\sigma(2)} \varepsilon \dots \varepsilon 
\omega_{\sigma(n)}.
\end{equation}

So, we see that 
$$
e^{R_\varepsilon}(\omega)=\sum_{n\ge 1} \frac{1}{n!} f_n(\omega^{\odot n}) = \omega + \omega \varepsilon \omega +
\omega \varepsilon \omega \varepsilon \omega + \dots
$$
\end{proof}
%
%\textsl{Remark.}\marco{I'll expand on this a bit} Note that the propagated MC element $B$ satisfies the ''Ricatti equation`` $\dot{B} = B \dot{\varepsilon} B$ 
%(where the dot over a quantity indicates the derivative $\frac{d}{dt}$).
%In particular, if $\varepsilon=\varepsilon_1 t$ is a linear deformation of $L$, then we have 
%$$
%\dot{B} = B {\varepsilon_1} B
%$$
\begin{remark}
We see from the above theorem that the natural correspondence between Maurer-Cartan elements~\eqref{keymcmap} suggested by Dirac geometry does indeed coincide with the application of an $L_\infty$ morphism. 
\end{remark}

\section{Examples}\label{section:examples}

\subsection{Poisson structures}\label{poisex}
To a Poisson manifold $(X,\pi)$ there corresponds a natural DGLA on the (shifted) de Rham complex $(\Omega[1], d, [~,~]_\pi)$, 
which is sometimes called the Koszul DGLA. To obtain the DGLA from our formalism, one needs to consider the pair of transverse Dirac structures
$M=T$ and $L_\pi=graph(\pi:T^*\to T)$. Then the Koszul DGLA coincides with the DGLA $(\Omega_M[1], d_M, [-,-]_{L_\pi})$ introduced in Section 
\ref{dgladir}. Another natural choice of complement to $M$ is $L=T^*$; the DGLA $(\Omega_M[1], d_M, [~,~]_L)$ is the de Rham complex $\Omega$ with zero bracket.
Applying Corollary \ref{cor:independence_of_L_infty_from_the_choice_of_complement}, we recover the corresponding results of \cite{ShTa08} and \cite{FM12}.

\begin{theorem}\label{Thm:Formality_of_Koszul_DGLA}
Let $(X,\pi)$ be a Poisson manifold.
Then the Koszul DGLA $(\Omega[1], d, [~,~]_\pi)$ is formal, that is, $L_\infty$-isomorphic to the abelian DGLA $(\Omega[1], d, [~,~]=0)$.
\end{theorem}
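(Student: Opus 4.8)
The plan is to obtain this as an immediate consequence of Corollary~\ref{cor:independence_of_L_infty_from_the_choice_of_complement}, by exhibiting both the Koszul DGLA and the abelian de Rham DGLA as the $L_\infty$ structures attached to a single Dirac structure together with two different transversal complements. Concretely, I would work with $H=0$ and take $M=TX$, which is then an involutive Lagrangian in $\mathbb{T}X$; the two complements to compare are $L=T^*X$ and $L'=L_\pi:=\mathrm{graph}(\pi\colon T^*X\to TX)$, and since $\pi$ is a genuine bivector we have $L_\pi=e^{\pi}L$ with $\pi\in\mathcal{X}^2_M=\Gamma(\wedge^2 TX)$, exactly fitting the hypotheses of the corollary.

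First I would check that the pair $(M,L)=(TX,T^*X)$ yields the abelian DGLA. As $T^*X$ is an involutive Lagrangian whose induced Lie algebroid has vanishing anchor and bracket, and as both $TX$ and $T^*X$ are integrable, all of $m_0$, $m_2$, $m_3$ in~\eqref{eq:derived_brackets} vanish; equivalently, in the grading determined by $\mathbb{T}X=TX\oplus T^*X$ the operator $d_H=d$ has only its degree-$(+1)$ piece, so $\delbar=d$ and $\partial=N_{TX}=N_{T^*X}=0$. Hence the construction of Section~\ref{section:proof} returns exactly $(\Omega_{TX}[1],d_{TX},0)=(\Omega[1],d,0)$.

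Next I would identify the pair $(M,L')=(TX,L_\pi)$ with the Koszul DGLA: the Poisson condition $[\pi,\pi]=0$ is precisely what makes $L_\pi$ involutive, hence a Dirac structure transverse to $TX$, and under the identification $\Omega_{TX}\cong\mathcal{X}_{L_\pi}$ the resulting DGLA $(\Omega_{TX}[1],d_{TX},[\cdot,\cdot]_{L_\pi})$ is the classical Koszul DGLA $(\Omega[1],d,[\cdot,\cdot]_\pi)$, as recalled at the beginning of this subsection (cf.~\cite{LWX,FM12}). Applying Corollary~\ref{cor:independence_of_L_infty_from_the_choice_of_complement} then produces a canonical $L_\infty$-isomorphism $e^{R_\pi}$, with $R_\pi=[\iota_\pi,\mu]$, between the Koszul DGLA and $(\Omega[1],d,0)$, which is the asserted formality (and it implies formality in the sense of Section~\ref{linfsym}, since $(\Omega[1],d,0)$ is quasi-isomorphic to $(H^\bullet_{dR}(X),0)$). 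Moreover, by Lemma~\ref{R_explicit} and Theorem~\ref{mccomp}, both the components of this map and its effect on Maurer--Cartan elements (i.e.\ on deformations of $\pi$) are completely explicit, recovering the formality maps of~\cite{ShTa08,FM12}.

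The only real content beyond invoking the corollary is the bookkeeping in these two identifications, and this is where I expect the (entirely routine) work to lie: the first identification is immediate from the degree decomposition of $d$, while the second is the standard realization of the Koszul bracket via Dirac geometry. I do not anticipate any genuine obstacle — in particular there is no issue of convergence or of formal parameters here, since $\pi$ is an honest bivector and Corollary~\ref{cor:independence_of_L_infty_from_the_choice_of_complement} directly delivers a bona fide $L_\infty$-isomorphism.
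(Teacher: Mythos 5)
Your proposal is correct and follows exactly the paper's argument in Section~\ref{poisex}: take $M=TX$ with the two transversals $L=T^*X$ (giving the abelian de Rham DGLA) and $L_\pi=\mathrm{graph}(\pi)$ (giving the Koszul DGLA), and apply Corollary~\ref{cor:independence_of_L_infty_from_the_choice_of_complement}. The bookkeeping you describe for the two identifications is the same as what the paper leaves implicit.
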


\subsection{Non-integrable bivectors. Quasi-Poisson manifolds}

We may extend the above result to the case of general bivectors which may not be Poisson. 
The proof of the following result is identical to that of Theorem \ref{Thm:Formality_of_Koszul_DGLA}, but due to the fact that the Lagrangian  
$graph(\pi:T^*\to T)$ has nonzero Nijenhuis tensor, we obtain a nontrivial ternary bracket.

\begin{theorem}\label{thm:non_integrable_bivectors}
 Let $X$ be a manifold and $\pi$ any smooth bivector field. Then we obtain a $L_\infty$ structure $(\Omega[1], \ell_1,\ell_2,\ell_3)$ on the shifted de Rham complex of $X$, where $\ell_1=d$ is the de Rham differential, $\ell_2$ is given by the Koszul bracket 
\begin{equation}\label{luzsok}
\ell_2(x,y) = (-1)^{|{x}|}(\mathcal{L}_\pi(x \wedge y) - \mathcal{L}_\pi(x) \wedge y ) -  x \wedge \mathcal{L}_\pi(y),
\end{equation}
and $\ell_3$ is given by 
\begin{equation}\label{yranret}
\ell_3(x,y,z)=i_{\frac{1}{2}[\pi,\pi]}(x \wedge y \wedge z)
\end{equation}
for $1$-forms $x,y,z$, and extended to all forms by requiring
the Leibniz rule in each entry.

Furthermore, 
this $L_\infty$ algebra is formal, i.e. isomorphic to the abelian $L_\infty$ algebra $(\Omega[1], d)$ via the $L_\infty$ map $e^{R_\pi}$.
\end{theorem}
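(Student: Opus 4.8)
The plan is to obtain this result as a direct application of Corollary~\ref{cor:independence_of_L_infty_from_the_choice_of_complement}, mirroring the argument for Theorem~\ref{Thm:Formality_of_Koszul_DGLA} but being careful about which of the two transversals carries the nontrivial Nijenhuis tensor. First I would set up the generalized tangent bundle $\TT X$ (with $H=0$) and take $M = T^*X$ together with the almost Dirac structure $L = \mathrm{graph}(\pi\colon T^*X \to TX)$. Since $\pi$ need not be Poisson, $L$ is not involutive, so its Nijenhuis tensor $N_L$ equals $\tfrac{1}{2}[\pi,\pi]$ viewed as an $L$-trivector; meanwhile $M = T^*X$ \emph{is} involutive (the cotangent bundle is always a Dirac structure, with trivial bracket), so $N_M = 0$ and the resulting $L_\infty$ algebra on $\Omega_M[1]$ is flat. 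Applying the formulas~\eqref{eq:derived_brackets} with this $(M,L)$, I would identify $\Omega_M = \Gamma(\wedge T X) \cong$ (shifted) de Rham complex via the canonical pairing, read off $\ell_1 = d$ from $m_1 = [\delbar,\cdot]$, read off $\ell_2$ as the Koszul bracket~\eqref{luzsok} from $m_2 = [[\del,\cdot],\cdot]$, and read off $\ell_3$ from $m_3 = [[[N_L,\cdot],\cdot],\cdot]$, which gives the contraction with $\tfrac12[\pi,\pi]$ in~\eqref{yranret}; the sign conversions~\eqref{eq:symm_to_skew} between the symmetric $m_k$ and skew $\ell_k$ account for the signs displayed in~\eqref{luzsok}. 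Checking that these match the classical Koszul-type formulas is a routine (if slightly tedious) Clifford-algebra computation, which I would either carry out explicitly or cite from the standard references on Dirac geometry.

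For the formality statement, I would choose the second transversal $L' = TX$, which is involutive with $N_{L'} = 0$; the pair $(M, L') = (T^*X, TX)$ produces the de Rham complex with \emph{all} brackets vanishing, i.e. the abelian $L_\infty$ algebra $(\Omega[1], d)$, since both $M$ and $L'$ are integrable and the only surviving bracket $m_1 = [\delbar,\cdot]$ is just $d_M$. Now $L = \mathrm{graph}(\pi)$ is exactly $e^{\pi}L'$ where $\pi \in \mathcal{X}^2_M = \Gamma(\wedge^2 TX)$ is viewed as a skew map $L' = TX \to T^*X = M$ — wait, I need the direction to match the hypothesis of Corollary~\ref{cor:independence_of_L_infty_from_the_choice_of_complement}, which asks for $\varepsilon \in \mathcal{X}^2_M$ with $L' = e^\varepsilon L$; here $\mathcal{X}^2_M = \Gamma(\wedge^2 M) = \Gamma(\wedge^2 TX)$ since $M = T^*X$ has $M \cong TX$ canonically, and the bivector $\pi$ plays precisely the role of this $\varepsilon$. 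Hence Corollary~\ref{cor:independence_of_L_infty_from_the_choice_of_complement} applies directly and yields an $L_\infty$-isomorphism $e^{R_\pi}$ between the $L_\infty$ structure on $\Omega_M[1]$ coming from $(M,L)$ — which is the cubic $L_\infty$ algebra $(\Omega[1], d, \ell_2, \ell_3)$ just described — and the one coming from $(M, L')$ — which is the abelian $(\Omega[1], d)$. This proves formality.

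The main obstacle is bookkeeping rather than conceptual: I must pin down the identifications precisely so that the abstract brackets~\eqref{eq:derived_brackets} reproduce the classically normalized formulas~\eqref{luzsok}–\eqref{yranret}, including all Koszul signs and the factor of $\tfrac12$ in $[\pi,\pi]$. This requires (i) fixing the isomorphism $\Omega_M = \mathcal{X}_L \cong \Omega^\bullet(X)$ — contracting a pure spinor generator, or equivalently using $U_k = \wedge^k M^* \otimes K_M$ from~\eqref{eq:spinor_grading} — and tracking the degree shift $[1]$ versus $[2]$; (ii) computing the graded commutators $[\partial, \alpha\cdot]$ and $[N_L, \alpha\cdot]$ on the spinor module for $\alpha$ a $1$-form, using the Clifford relations and the decomposition~\eqref{dcomps} of $d_H = d$; and (iii) applying~\eqref{eq:symm_to_skew} to pass from $m_k$ to $\ell_k$. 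I expect (ii) to be where the real work lies, but it is a finite, purely algebraic verification with no genuine difficulty, so I would present the key intermediate identities and leave the remaining sign-chasing to the reader or to an appendix. The Leibniz-rule extension to all forms is automatic from the $G_\infty$-compatibility noted in the Remark following Theorem~\ref{quantized_derived_brackets_integrability}.
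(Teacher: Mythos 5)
Your overall strategy --- realize the cubic $L_\infty$ algebra from a pair of transversals and apply Corollary~\ref{cor:independence_of_L_infty_from_the_choice_of_complement} with the second transversal giving the abelian structure --- is exactly the paper's, but you have swapped the roles of $M$ and $L$, and this breaks the argument at the first step. You take $M=T^*X$ and $L=\mathrm{graph}(\pi\colon T^*X\to TX)$. These two Lagrangians are \emph{not} transverse unless $\pi$ is nondegenerate: their intersection is $\ker\pi$, since $\xi+\pi(\xi)$ lies in $T^*X$ exactly when $\pi(\xi)=0$. Even setting transversality aside, with $M=T^*X$ the underlying complex is $\Omega_M=\Gamma(\wedge M^*)=\Gamma(\wedge TX)$, the multivector fields, not the de Rham complex, and there is no canonical pairing identifying the two; moreover the anchor of $M=T^*X$ vanishes, so $d_M=0$ and $\ell_1$ cannot be the de Rham differential. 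Your identification of the abelian side fails for the same reason: the pair $(T^*X,TX)$ yields multivector fields with zero differential and the \emph{nonzero} Schouten bracket --- this is precisely what the paper exploits in the subsection on $L_\infty$-automorphisms of multivector fields --- not the abelian $L_\infty$ algebra $(\Omega[1],d)$. Finally, with $M=T^*X$ one has $\mathcal{X}^2_M=\Gamma(\wedge^2 T^*X)$, the $2$-forms, so a bivector is not an admissible $\varepsilon$ for Corollary~\ref{cor:independence_of_L_infty_from_the_choice_of_complement}; your parenthetical ``$M\cong TX$ canonically'' papers over this, but no such canonical isomorphism exists.

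The correct instantiation, which is the one the paper uses (it reduces the present statement to the proof of Theorem~\ref{Thm:Formality_of_Koszul_DGLA}), keeps $M=TX$ fixed and compares the two transversals $L=T^*X$ and $L'=\mathrm{graph}(\pi\colon T^*X\to TX)=e^{\pi}L$. Then $\Omega_M=\Gamma(\wedge T^*X)$ is the de Rham complex, $d_M=d$, the pair $(TX,T^*X)$ gives the abelian structure, the pair $(TX,\mathrm{graph}(\pi))$ gives the Koszul bracket together with the ternary bracket coming from $N_{L'}=\tfrac12[\pi,\pi]$ via \eqref{eq:derived_brackets}, and $\pi\in\mathcal{X}^2_M=\Gamma(\wedge^2 TX)$ is exactly the $\varepsilon$ required by Corollary~\ref{cor:independence_of_L_infty_from_the_choice_of_complement}; note also that $\mathrm{graph}(\pi)$ is transverse to $TX$ for every $\pi$, regardless of rank. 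With this correction the remainder of your outline --- reading off $\ell_1,\ell_2,\ell_3$ from \eqref{eq:derived_brackets} and converting signs via \eqref{eq:symm_to_skew} --- goes through as you describe.
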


An important source of non-integrable bivectors is quasi-Poisson geometry~\cite{AKSM}, prominent in the theory of group valued moment maps \cite{AMM}
and useful for studying the moduli space of flat connections on surfaces \cite{LiBS}.
Let $G$ be a compact Lie group acting on a manifold $X$. Fix an invariant inner product $\langle\cdot,\cdot\rangle$ on the Lie algebra $\mathfrak{g}$ and form the Cartan $3$-form
$\eta\in \wedge^3 \mathfrak{g}^*$ by letting $\eta(x,y,z)=\frac{1}{2}\langle[x,y],z\rangle$. 
A bivector $\pi$ is called \textit{quasi-Poisson} if $\pi$ is $G$-invariant and $[\pi,\pi]=(\eta^{-1})_X$, where the inverse is taken with respect to
the inner product, and the index $X$ indicates that the $3$-vector $\eta^{-1}$ is transported onto $X$ via the action map.

In the setting of a quasi-Poisson manifold $(G,X,\pi)$, therefore, it makes sense to restrict the cubic $L_\infty$ algebra obtained above to the subspace of $G$-invariant forms $(\Omega)^G$. 
All the brackets as well as the formality morphism $e^{R_\pi}$ respect the restriction, so we obtain the following result.

\begin{corollary}\label{cor:non_integrable_bivectors}
Any quasi-Poisson manifold $(G, X, \pi)$ gives rise to a cubic $L_\infty$ structure 
$((\Omega)^G[1], d, \ell_2, \ell_3)$ on the shifted $G$-invariant differential forms, where $\ell_2,\ell_3$ are given by \eqref{luzsok} and \eqref{yranret}. Furthermore, this $L_\infty$ algebra is formal.
\end{corollary}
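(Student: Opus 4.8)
The plan is to reduce Corollary~\ref{cor:non_integrable_bivectors} to Theorem~\ref{thm:non_integrable_bivectors} by carrying out everything $G$-equivariantly. Since $\pi$ is $G$-invariant, all of the geometric data involved -- the Dirac structure $M = T$, the complement $L_\pi = \mathrm{graph}(\pi : T^* \to T)$, the twisted differential $d_H$ with $H = \eta_X$ (the Cartan $3$-form transported to $X$; note that here, unlike in Theorem~\ref{thm:non_integrable_bivectors}, one works in the $H$-twisted Courant algebroid so that the Nijenhuis tensor $N_{L_\pi}$ equals $\tfrac12[\pi,\pi] - \tfrac12 (\eta^{-1})_X = 0$ when $\pi$ is quasi-Poisson, or is left as a genuine cubic term $\tfrac12([\pi,\pi] - (\eta^{-1})_X)$ in the non-integrable variant) -- is preserved by the induced $G$-action on $\TT X$ and on $\Omega$. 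First I would record that the Clifford action of $\TT X$ on $\Omega$, the alternative $\ZZ$-grading $\Omega = \oplus_k U_k$ coming from the splitting $\TT X = M \oplus L_\pi$, the graded pieces $N_L, \del, \delbar, N_M$ of $d_H$, and the contraction operator $\iota_\pi$ together with the wedge product $\mu$ are all $G$-equivariant. Consequently the symmetric brackets $m_k$ of Equation~\ref{eq:derived_brackets} and the coderivation $R_\pi = [\iota_\pi, \mu]$ restrict to the $G$-invariant subcomplex $(\Omega)^G$.

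Next I would argue that restriction to invariants is compatible with the $BV_\infty$ torsor formalism. The pair $((\Omega_M)^G \cong (\mathcal{X}_{L_\pi})^G,\ (\Omega)^G,\ d_H)$ is again a $BV_\infty$ torsor (the order-of-differential-operator conditions are inherited from $\Omega$, since $(\Omega)^G$ is a subalgebra-module and $d_H$ preserves it), so Theorem~\ref{quantized_derived_brackets_integrability} applies verbatim to produce the cubic $L_\infty$ structure $((\Omega)^G[1], d, \ell_2, \ell_3)$, with $\ell_2, \ell_3$ given by restricting \eqref{luzsok} and \eqref{yranret}. For the formality statement, I would invoke Theorem~\ref{thm:non_integrable_bivectors} (more precisely Theorem~\ref{Thm:derived_bracket_construction_preserves_gauge_action} / Corollary~\ref{cor:independence_of_L_infty_from_the_choice_of_complement}) applied to the transverse pair $(M, L_\pi)$ versus $(M, L = T^*)$: the latter has vanishing brackets, so the $L_\infty$ map $e^{R_\pi}$ is an $L_\infty$-isomorphism onto the abelian $L_\infty$ algebra $((\Omega)^G[1], d)$. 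Because $R_\pi$ is built from $G$-equivariant operators, $e^{R_\pi}$ preserves $(\Omega)^G$, so it restricts to the desired formality isomorphism on invariant forms.

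The one genuine subtlety -- and the step I expect to require the most care -- is checking that in the quasi-Poisson case the relevant twisted Courant structure is the one for which the construction of Theorem~\ref{thm:non_integrable_bivectors} literally produces the claimed $\ell_2, \ell_3$. Concretely, one must verify that with $H$ chosen to be (a primitive of, or rather the closed $3$-form) $\eta_X$, the Nijenhuis tensor $N_{L_\pi}$ is precisely $\tfrac12([\pi,\pi] - (\eta^{-1})_X)$, which vanishes exactly on the quasi-Poisson locus; and that $N_M = N_T = H = \eta_X$ contributes only a curvature term $m_0$ that is killed upon restricting to the deformation problem at hand (or is carried along harmlessly as the arity-zero bracket). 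All of this is formal once one unwinds the definitions in \eqref{dcomps}, but it is the place where the quasi-Poisson hypothesis $[\pi,\pi] = (\eta^{-1})_X$ actually enters. Everything else is a routine verification that ``pass to $G$-invariants'' commutes with each ingredient of the earlier constructions, which is immediate since $G$ acts by automorphisms of $(\TT X, H)$ covering its action on $X$.
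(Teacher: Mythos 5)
Your overall strategy --- restrict the construction of Theorem~\ref{thm:non_integrable_bivectors} to $G$-invariant forms by observing that every ingredient (the Clifford action, the grading $\Omega=\oplus_k U_k$, the graded pieces of the differential, $\iota_\pi$, $\mu$, hence the brackets and $e^{R_\pi}$) is $G$-equivariant because $\pi$ is $G$-invariant --- is exactly the paper's argument, and your first two paragraphs are correct. The problem is the step you yourself flag as the ``genuine subtlety'': the introduction of a twist $H=\eta_X$ so that $N_{L_\pi}=\tfrac12[\pi,\pi]-\tfrac12(\eta^{-1})_X$ vanishes. This is not what the corollary asserts and it does not work. The statement keeps $\ell_3$ given by \eqref{yranret}, i.e.\ $\ell_3=i_{\frac12[\pi,\pi]}=i_{\frac12(\eta^{-1})_X}$, which is genuinely nonzero in the quasi-Poisson case --- that is precisely why the restricted algebra is still called \emph{cubic}. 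The paper works throughout in the untwisted Courant algebroid, applies Theorem~\ref{thm:non_integrable_bivectors} to the arbitrary (non-integrable) bivector $\pi$, and uses the quasi-Poisson hypothesis only to (a) identify $\tfrac12[\pi,\pi]$ with $\tfrac12(\eta^{-1})_X$ and (b) guarantee $G$-invariance of $\pi$ so that the restriction to $(\Omega)^G$ makes sense. Formality is already supplied by Theorem~\ref{thm:non_integrable_bivectors} for any bivector; the quasi-Poisson condition plays no role in it.

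Beyond contradicting the statement, your proposed twist is not well-defined and would break the construction. On a general quasi-Poisson $G$-manifold $X$ the Cartan element $\eta^{-1}\in\wedge^3\mathfrak{g}$ transports via the action map to a \emph{trivector} $(\eta^{-1})_X$, not to a closed $3$-form, so there is no canonical $H$ to twist by. Even granting such an $H\in\wedge^3T^*X$, its Clifford decomposition relative to $\TT X=T\oplus L_\pi$ has components in every bidegree, so it would not simply shift $N_{L_\pi}$ by $-\tfrac12(\eta^{-1})_X$; and, as you note, it would make $M=T$ non-involutive, introducing a curvature term $m_0=N_M\neq0$. That would yield a curved $L_\infty$ algebra, for which the paper's notion of formality (defined only for flat $L_\infty$ algebras) does not apply --- so the twist cannot be ``carried along harmlessly.'' Dropping the twist entirely and running your equivariant-restriction argument on the untwisted structure gives the correct proof.
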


\subsection{$L_\infty$-automorphisms of the Lie algebra of multivector fields}

On any manifold $X$, let $M$, $L$ be the Dirac structures $T^*X$, $TX$, respectively,  for the Courant bracket with $H=0$.  Then let $L'$ be the Dirac structure defined by the graph of a closed 2-form $\omega\in \Omega^2$. Since the ``B-field gauge transformation'' 
\begin{equation}
e^\omega: X+\xi \mapsto X + \xi + \iota_X \omega
\end{equation}
is an automorphism of the Courant algebroid $\TT X$ taking $L$ to $L'$, the transversals $L$ and $L'$ define identical DGLA structures on the de Rham complex of $T^*X$: in both cases we obtain the complex of multivector fields $\mathcal{X}$ with zero differential and bracket given by the Schouten bracket. 

Applying Corollary~\ref{cor:independence_of_L_infty_from_the_choice_of_complement} to this example, we see immediately that $\exp(R_\omega)$ defines an $L_\infty$-automorphism of the Lie algebra of multivector fields:

\begin{theorem}
 To every closed $2$-form $\omega$ there corresponds an $L_\infty$-automorhism $e^{R_\omega}$ of the Lie algebra
$(\mathcal{X}[1], [-,-]_S)$ of multivector fields equipped with Schouten bracket.
\end{theorem}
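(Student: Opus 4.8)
The plan is to recognize this statement as an immediate specialization of Corollary~\ref{cor:independence_of_L_infty_from_the_choice_of_complement}, so essentially all the work has already been done; what remains is to identify the objects correctly. First I would take the exact Courant algebroid $\mathbb{T}X$ with $H = 0$, with the pair of transverse Dirac structures $M = T^*X$ and $L = TX$. By the general construction of Section~\ref{section:proof}, the $L_\infty$ structure attached to $(M,L)$ lives on $\Omega_M[1] = \Gamma(\wedge M^*)[1] = \Gamma(\wedge TX)[1] = \mathcal{X}[1]$. Since both $L = TX$ and $M = T^*X$ are involutive, the cubic and curvature terms $m_3, m_0$ vanish, and the induced structure is the DGLA $(\mathcal{X}[1], d_M, [\cdot,\cdot]_L)$; here $d_M = 0$ because $N_L = 0$ forces $\del = 0$ on the nose (equivalently, there is no de Rham differential on the Lie algebroid $T^*X$ with zero bracket), and $[\cdot,\cdot]_L$ is exactly the Schouten bracket $[\cdot,\cdot]_S$ on multivector fields. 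So the relevant DGLA is $(\mathcal{X}[1], 0, [\cdot,\cdot]_S)$, i.e. the graded Lie algebra of multivector fields.

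Next I would observe that for a closed $2$-form $\omega$, the B-field transformation $e^\omega$ is an orthogonal automorphism of $\mathbb{T}X$ preserving the (untwisted) Courant bracket, and it carries $L = TX$ to $L' = \mathrm{graph}(\omega: TX \to T^*X)$, which is therefore again a Dirac structure transverse to $M = T^*X$. Because $e^\omega$ is a Courant automorphism fixing $M$, the DGLA $(\Omega_M[1], d_M, [\cdot,\cdot]_{L'})$ is literally identified with $(\Omega_M[1], d_M, [\cdot,\cdot]_L)$ via the induced map on $\Omega_M = \Gamma(\wedge M^*)$ — but that induced map is the identity, since $e^\omega$ fixes $M = T^*X$ pointwise. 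Hence the two transversals $L, L'$ define the \emph{same} DGLA on $\mathcal{X}[1]$, namely $(\mathcal{X}[1], 0, [\cdot,\cdot]_S)$.

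Now I would invoke Corollary~\ref{cor:independence_of_L_infty_from_the_choice_of_complement} with this pair: writing $L' = e^\omega L$ in the notation of the corollary (here the role of the bivector $\varepsilon \in \mathcal{X}^2_M$ is played by $\omega$, viewed as a skew map $L = TX \to M = T^*X$, which is exactly an element of $\mathcal{X}^2_{T^*X} = \Omega^2$), the corollary asserts that the $L_\infty$ structures on $\Omega_M[1]$ coming from $(M,L)$ and $(M,L')$ are $L_\infty$-isomorphic via $e^{R_\omega}$. Since by the previous paragraph both of these $L_\infty$ structures coincide with $(\mathcal{X}[1], 0, [\cdot,\cdot]_S)$, the map $e^{R_\omega}$ is an $L_\infty$-automorphism of $(\mathcal{X}[1], [\cdot,\cdot]_S)$. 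This completes the proof.

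I do not anticipate a genuine obstacle here: the only point requiring a little care is the bookkeeping of which subbundle plays the role of ``$M$'' versus ``$L$'' (the formality direction is opposite to the Poisson case of Theorem~\ref{Thm:Formality_of_Koszul_DGLA}, where $M = T$), and the verification that $e^\omega$ — which permutes the graded pieces $U_k = (\wedge^k L)\cdot K_M$ nontrivially as a Clifford operator on $\Omega$ — nonetheless induces the identity on $M$-forms because it fixes $M$ itself. Once that identification is in place, the theorem is a one-line corollary of the general machinery, and in particular $R_\omega = [\iota_\omega, \mu]$ with $\iota_\omega$ the contraction and $\mu$ the wedge product on $\mathcal{X}$, exactly as in the statement.
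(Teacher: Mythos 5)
Your proposal is correct and follows essentially the same route as the paper: identify $(M,L)=(T^*X,TX)$ with $H=0$ so that $\Omega_M[1]=\mathcal{X}[1]$ carries the Schouten DGLA with zero differential, observe that the B-field transformation $e^\omega$ is a Courant automorphism fixing $M$ and sending $L$ to $L'=\mathrm{graph}(\omega)$ so that both transversals induce the identical DGLA, and then apply Corollary~\ref{cor:independence_of_L_infty_from_the_choice_of_complement}. The only cosmetic slip is the aside attributing $d_M=0$ to ``$N_L=0$ forces $\del=0$'' (in the paper's grading $d_M$ is generated by $\delbar=d_{+1}$, and it vanishes because the Lie algebroid $T^*X$ has zero anchor and bracket, as your parenthetical correctly states), which does not affect the argument.
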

%
%\begin{proof}
%The pair of transverse Diracs $M=T^*$, $L=T$ induce the DGLA $(\Omega_M[1]\cong \mathcal{X}[1], d_M, [-,-]_{L})$ as explained in Section \ref{dgladir}.
%Since $d_M=0$, the DGLA, in fact, reduces to the Lie algebra  $(\mathcal{X}[1], [-,-]_S)$.
%Another pair of transverse Dirac structures $M=T^*$ and 
%$L_\omega=graph(\omega:T\to T^*)$ induces the DGLA $(\Omega_M[1]\cong \mathcal{X}[1], d_M, [-,-]_{L_\omega})$.
%Since the transformation $X+\xi \mapsto X + \xi + \iota_\omega X$ is an automorphism of the Courant algebroid $T\oplus T^*$, 
%the Lie algebra $(\Omega_M[1]\cong \mathcal{X}[1], [-,-]_{L_\omega})$ coincides with the Lie algebra $(\mathcal{X}[1],[-,-]_S)$, too.
%Finally, applying Corollary \ref{cor:independence_of_L_infty_from_the_choice_of_complement}, we get that $e^{R_\omega}$ provides an $L_\infty$-isomorphism
%$(\Omega_M[1], d_M, [-,-]_{L})\longrightarrow(\Omega_M[1], d_M, [-,-]_{L_\omega})$.
%\end{proof}
Let us compute the action of $e^{R_\omega}$ on Maurer-Cartan elements of the Lie algebra $(\mathcal{X}[1], [-,-]_S)$, i.e. on Poisson bivectors. 
Theorem \ref{mccomp} states that if $\pi$ is a Poisson bivector, 
then $(e^{R_\omega})_*\pi = \pi (1-\omega \pi)^{-1}= \pi +  \pi \omega \pi + \pi \omega \pi \omega \pi + \dots $ is again a Poisson bivector, 
provided the series converges. The latter series is well-known in the Poisson literature as a gauge, or $B$-field, transformation of $\pi$ (see~\cite{Sevwein}). Geometrically, the gauge transformation of a Poisson structure $\pi$ corresponds to subtracting the pullback of $\omega$ from the symplectic form on each symplectic leaf of $\pi$.

\subsection{Dirac structures on Lie groups}
\label{Subsec:lie_groups}

Let $\mathfrak{g}$ be a quadratic Lie algebra, that is a Lie algebra with invariant non-degenerate symmetric product $\langle\cdot,\cdot\rangle$. 
Let $\overline{\mathfrak{g}}$ be the same Lie algebra $\mathfrak{g}$ with the symmetric product $-\langle\cdot,\cdot\rangle$. 
Then we can form the ``double'' $\mathfrak{d}=\mathfrak{g}\oplus \overline{\mathfrak{g}}$, 
endowed with a unique Lie bracket extending the brackets on $\mathfrak{g}$ and $\overline{\mathfrak{g}}$ 
and compatible with the direct sum of inner products.
One can think of $\mathfrak{d}$ as a finite-dimensional version of the Courant algebroid $\TT G = T_G \oplus T^*_G $, 
where $G$ is a Lie group integrating $\mathfrak{g}$.
Specifically, there is a natural isomorphism (see~\cite{ABM})
$\TT G \cong G\times (\mathfrak{g}\oplus \overline{\mathfrak{g}})$, 
preserving the pairing and the bracket, where the generalized tangent bundle $\TT G$ is endowed with the Courant bracket
twisted by the Cartan $3$-form.

The diagonal $\Delta=\{(x,x):x\in \mathfrak{g}\}$ is a Lagrangian subalgebra in $\mathfrak{d}=\mathfrak{g}\oplus \overline{\mathfrak{g}}$. The antidiagonal
$\overline{\Delta}=\{(x,-x): x\in \mathfrak{g}\}$ is also Lagrangian, but is not a subalgebra, since $[\overline{\Delta},\overline{\Delta}]\subset \Delta$.
Applying Corollary \ref{cor:independence_of_L_infty_from_the_choice_of_complement} to the pair of Lagrangians $(\Delta,\overline{\Delta})$ inside $\mathfrak{d}$ we get the following
\begin{theorem}\label{Thm:cartan_vs_gauss_three_term_L_infty}
Let $\mathfrak{g}$ be a quadratic Lie algebra. Then $(\wedge \mathfrak{g}^*)[1]$ carries a natural cubic $L_\infty$ structure, where $\ell_1 = d_{CE}$ is the Chevalley-Eilenberg differential, $\ell_2$ is the zero bracket, and $\ell_3$ is given for $1$-forms $x,y,z\in \mathfrak{g}^*$ by  by the formula 
\begin{equation}
\ell_3(x,y,z)=\iota_{\eta^{-1}}(x\wedge y\wedge z), 
\end{equation}
(where $\eta^{-1}$ is the inverse of the Cartan 3-form relative to the inner product) and extended to all forms by requiring the Leibniz rule in each argument.
\end{theorem}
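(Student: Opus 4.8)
The plan is to obtain this as an instance of Corollary~\ref{cor:independence_of_L_infty_from_the_choice_of_complement} applied to the finite-dimensional ``Courant algebroid'' $\mathfrak{d} = \mathfrak{g} \oplus \overline{\mathfrak{g}}$, with $M = \Delta$ playing the role of the Dirac structure and $L = \overline{\Delta}$ the complementary Lagrangian. First I would identify the de Rham complex of the Lie algebroid $\Delta$: since $\Delta \cong \mathfrak{g}$ as a Lie algebra, the algebroid differential $d_M$ is precisely the Chevalley-Eilenberg differential $d_{CE}$ on $\wedge \mathfrak{g}^*$, so the underlying complex in the claimed $L_\infty$ structure is correct. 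Next I would run the derived bracket construction~\eqref{eq:derived_brackets} for the pair $(\Delta, \overline{\Delta})$: the relevant structure operator is the element of $\mathfrak{d}$-Clifford degree encoded by the ``twisted differential'', whose graded pieces relative to the splitting $\mathfrak{d} = \Delta \oplus \overline{\Delta}$ I must compute. Because $\Delta$ is a subalgebra, its Nijenhuis tensor vanishes, giving $m_0 = 0$; because $[\overline{\Delta}, \overline{\Delta}] \subseteq \Delta$, the bracket $[\cdot,\cdot]_{\overline{\Delta}}$ is identically zero, so $m_2 = 0$. This leaves $\ell_1 = d_{CE}$ and a potentially nonzero ternary bracket $m_3$ coming from the Nijenhuis tensor $N_{\overline{\Delta}}$ of the antidiagonal.

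The key computation is therefore the identification of $N_{\overline{\Delta}}$. By the definition recalled after~\eqref{dcomps}, $N_{\overline{\Delta}}(x,y,z) = \langle [\![ x,y ]\!], z\rangle$ for $x,y,z \in \overline{\Delta}$, where the bracket is the $\mathfrak{d}$-bracket. Writing $\overline{\Delta} \ni x \leftrightarrow (\xi, -\xi)$ with $\xi \in \mathfrak{g}$, one has $[\![ (\xi,-\xi),(\eta,-\eta) ]\!] = ([\xi,\eta], [\xi,\eta])$, which lies in $\Delta$, and pairing with $(\zeta,-\zeta)$ using the split form $\langle\cdot,\cdot\rangle \oplus (-\langle\cdot,\cdot\rangle)$ produces $\langle [\xi,\eta],\zeta\rangle - \langle[\xi,\eta],-\zeta\rangle$ up to the overall normalization, i.e. a nonzero multiple of the Cartan $3$-form $\eta_{\mathrm{Cartan}}(\xi,\eta,\zeta) = \tfrac12\langle[\xi,\eta],\zeta\rangle$. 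Under the identification $U_k = \wedge^k \overline{\Delta}^* \otimes K_{\overline{\Delta}} = \wedge^{n-k}\Delta^* \otimes K_\Delta$ from~\eqref{eq:spinor_grading}, which here is just the pairing $\overline{\Delta} \cong \Delta^* \cong \mathfrak{g}^*$, the tensor $N_{\overline{\Delta}} \in \Omega^3_{\overline{\Delta}}$ is transported to an element of $\wedge^3 \mathfrak{g}$, namely the inverse Cartan tensor $\eta^{-1}$; then $m_3(x,y,z) = [[[N_{\overline{\Delta}},x],y],z]\cdot\rho$ becomes the contraction $\iota_{\eta^{-1}}(x\wedge y\wedge z)$ for $1$-forms, exactly as in~\eqref{yranret} but with $\pi$ replaced by the relevant tensor. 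Finally, the statement that these brackets assemble into a genuine (cubic) $L_\infty$ algebra is immediate from Theorem~\ref{quantized_derived_brackets_integrability}, since $d_H^2 = 0$ translates here into the fact that the structure operator on spinors squares to zero (equivalently, the Jacobi identity in $\mathfrak{d}$); the extension to all forms via the Leibniz rule is the $G_\infty$-compatibility noted in the remark after Theorem~\ref{quantized_derived_brackets_integrability}.

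The main obstacle I anticipate is bookkeeping rather than conceptual: carefully setting up the spinor/Fock-space formalism of Section~\ref{section:proof} in the purely algebraic setting of $\mathfrak{d}$ (where ``$\Omega$'' is $\wedge \mathfrak{g}^*$ with its $\mathfrak{d}$-Clifford action and ``$d_H$'' is the degree-shifting operator whose derived bracket is the $\mathfrak{d}$-bracket), and then chasing the normalization constants and signs through the identification $U_k \cong \wedge^{n-k}\Delta^*$ so that the contraction $\iota_{\eta^{-1}}$ appears with exactly the coefficient stated. Once the dictionary $N_{\overline{\Delta}} \leftrightarrow \eta^{-1}$ is pinned down, the vanishing of $\ell_2$ and $\ell_0$ and the formula for $\ell_3$ follow formally, and Corollary~\ref{cor:independence_of_L_infty_from_the_choice_of_complement} guarantees this cubic $L_\infty$ structure is canonically attached to $\mathfrak{g}$ (indeed $L_\infty$-isomorphic, via $e^{R_\varepsilon}$ for the appropriate $\varepsilon$ relating $\Delta$ and $\overline{\Delta}$, to the one built from any other Lagrangian complement).
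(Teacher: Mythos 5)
Your proposal is correct and follows essentially the same route as the paper, which simply invokes the derived bracket construction and Corollary~\ref{cor:independence_of_L_infty_from_the_choice_of_complement} for the pair of Lagrangians $(\Delta,\overline{\Delta})$ in $\mathfrak{d}$; your identification of the vanishing brackets ($\Delta$ a subalgebra gives $m_0=0$, $[\overline{\Delta},\overline{\Delta}]\subseteq\Delta$ gives $m_2=0$) and of $N_{\overline{\Delta}}$ with a multiple of the Cartan form, transported to $\eta^{-1}\in\wedge^3\mathfrak{g}$ via the pairing $\overline{\Delta}^*\cong\Delta\cong\mathfrak{g}$, supplies exactly the details the paper leaves implicit.
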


This $L_\infty$ algebra
controls the deformation theory of $\Delta$ as a Lagrangian subalgebra
of $\mathfrak{d}$. The variety of Lagrangian subalgebras in $\mathfrak{d}$ was studied extensively in \cite{EvLu}, where 
in particular, the authors described the irreducible components of the variety and showed that they are all smooth. 
As we know from \cite{EvLu}, the space of Lagrangian subalgebras in $\mathfrak{d}$ is \textit{smooth} near $\Delta$. 
This suggests that the $L_\infty$ algebra given by Theorem \ref{Thm:cartan_vs_gauss_three_term_L_infty} might be formal. We show in the following section (Corollary 
\ref{Cor:formality_of_cartan_vs_gauss_three_term_L_infty}) that this is indeed the case when $\mathfrak{g}$ is a simple complex Lie algebra.

Let $G$ be a Lie group integrating $\mathfrak{g}$. 
Then, as we mentioned above, one has an isomorphism 
$\TT G \cong G\times (\mathfrak{g}\oplus \overline{\mathfrak{g}})$, 
preserving the pairing and the bracket. Thus, the Lagrangian subalgebras of $\mathfrak{d}$ correspond to 
Dirac structures on $G$.
For instance, the diagonal $\Delta \subset \mathfrak{d}$ corresponds to the so-called Cartan-Dirac structure $E_G$\cite{ABM}. 
The antidiagonal $\overline{\Delta}$ corresponds to an almost Dirac structure $\hat{F}_G$, transverse to $E_G$. 
Then, one can study the deformation theory of $E_G$ from the viewpoint of $\hat{F}_G$. 
The relevant $L_\infty$ algebra is obtained from the one constructed in Theorem \eqref{Thm:cartan_vs_gauss_three_term_L_infty}; 
i.e. we embed $\wedge \mathfrak{g}^*$ into the complex of differential forms on $G$ as right-invariant forms, 
and then extend both the differential and the triple bracket to all forms by requiring Leibniz rule in each entry.

\subsection{Lie bialgebras}
Let $\mathfrak{g}$ be a Lie bialgebra. 
Then the double $\mathfrak{g}\oplus \mathfrak{g}^*$ carries a unique Lie bracket that extends the brackets 
on $\mathfrak{g}$ and $\mathfrak{g}^*$ and is invariant under the natural symmetric pairing. 
The space $\mathfrak{g}\oplus \mathfrak{g}^*$ is an example of general Courant algebroid.
Our techniques work in this situation equally well as in the situation of the exact Courant algebroid $T\oplus T^*$.

Specifically, let $L$ and $M$ be a pair of two transverse Lagrangian Lie subalgebras inside $\mathfrak{g}\oplus \mathfrak{g}^*$. 
Then using the identification $M^*\cong L$, we can endow the exterior algebra $\wedge M^*[1]$ with the strcture of DGLA. 
The DGLA controls the deformation theory of the Lie bialgebra $M$ preserving its double. 
Using the techniques we describe in Section \ref{section:proof}, one can show that choice of another Lagrangian Lie subalgebra $L_\varepsilon$, 
transverse to $M$, leads to an $L_\infty$-isomorphic DGLA.
Moreover, this $L_\infty$-isomorphism is determined canonically by the relative position of $L$ and $L_\varepsilon$.

Now, assume that $(\mathfrak{g},[~,~],[~,~]_*)$ is a triangular Lie bialgebra, i.e. the Lie bracket on $[~,~]_*$ is given 
as the $1$-coboundary $\delta \varepsilon$ for an $r$-matrix $\varepsilon \in \wedge^2\mathfrak{g}$ satisfying the classical Yang-Baxter equation 
$[\varepsilon,\varepsilon]=0$, 
where $\delta$ is the Chevalley-Eilenberg differential 
$\wedge (\mathfrak{g}^*)\otimes\wedge^2 \mathfrak{g} \to \wedge^{\bullet+1} (\mathfrak{g}^*) \otimes \wedge^2 \mathfrak{g}$ 
for the $\mathfrak{g}$-module $\wedge^2 \mathfrak{g}$. Consider $L=graph(\varepsilon:\mathfrak{g}^*\to \mathfrak{g})$ inside the
double $\mathfrak{g}\oplus \mathfrak{g}^*$ of the Lie bialgebra  $(\mathfrak{g},[~,~],0)$ with zero cobracket. 
Then one can check that $L$ is a Lagrangian subalgebra, and $[x^*,y^*]_*=[\![ \varepsilon(x^*),\varepsilon(y^*) ]\!]$, $x^*,y^*\in \mathfrak{g}^*$, 
where $[\![~, ~]\!]$ is the bracket on the double $\mathfrak{g}\oplus \mathfrak{g}^*$. Thus, we are in a situation for which our techniques apply,
and we arrive at the following

\begin{theorem}
 Let $(\mathfrak{g},[~,~],[~,~]_*)$ be a triangular Lie bialgebra, given by an $r$-matrix $\varepsilon \in \wedge^2\mathfrak{g}$.
Then the DGLA $((\wedge \mathfrak{g}^*)[1], d, [~,~]_*)$ is $L_\infty$-formal. 
\end{theorem}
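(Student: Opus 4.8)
The plan is to realize this theorem as a direct instance of Corollary~\ref{cor:independence_of_L_infty_from_the_choice_of_complement}, exactly in the spirit of the preceding examples (the Poisson case, the Lie group case). The setup: given a triangular Lie bialgebra $(\mathfrak{g},[~,~],[~,~]_*)$ with $r$-matrix $\varepsilon\in\wedge^2\mathfrak{g}$ satisfying $[\varepsilon,\varepsilon]=0$, we work inside the (general) Courant algebroid $\mathfrak{d}=\mathfrak{g}\oplus\mathfrak{g}^*$ equipped with the canonical pairing and the Lie bracket extending the brackets on the two factors. First I would fix $M=\mathfrak{g}^*$, which is a Lagrangian Lie subalgebra — here one must be careful: $M$ should be the Lagrangian whose de Rham (Chevalley--Eilenberg) complex carries the DGLA $((\wedge\mathfrak{g}^*)[1],d,[~,~]_*)$. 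Since the DGLA bracket is $[~,~]_*$ and the differential is the CE differential $d=d_{\mathfrak{g}}$ of $\mathfrak{g}$ acting on $\wedge\mathfrak{g}^*$, the correct identification is to take $M=\mathfrak{g}$ (so that $M^*\cong\mathfrak{g}^*$, $\Omega_M=\wedge\mathfrak{g}^*$, and $d_M=d_{CE}$), and to take the transversal to be $L=\mathfrak{g}^*$: with respect to this transversal, the induced bracket $[~,~]_L$ on $\wedge M^*=\wedge\mathfrak{g}^*$ is precisely the cobracket bracket $[~,~]_*$, because $L=\mathfrak{g}^*$ is a Lie subalgebra whose bracket is dual to the cobracket of the bialgebra.

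Next I would write down the \emph{second} transversal. The observation recorded just before the statement is that $L'=\mathrm{graph}(\varepsilon:\mathfrak{g}^*\to\mathfrak{g})$ is also a Lagrangian subalgebra of $\mathfrak{d}$ — this is where $[\varepsilon,\varepsilon]=0$ is used, since the classical Yang--Baxter equation is exactly the involutivity condition for this graph — and moreover the induced bracket $[x^*,y^*]_*=[\![\varepsilon(x^*),\varepsilon(y^*)]\!]$ shows that in the DGLA coming from the pair $(M,L')$ the bracket \emph{vanishes}, because relative to the transversal $L'$ the deformation complex of $M=\mathfrak{g}$ inside the double of $(\mathfrak{g},[~,~],0)$ (the bialgebra with zero cobracket, whose double is $\mathfrak{g}\ltimes\mathfrak{g}^*$ with $\mathfrak{g}^*$ abelian) has trivial induced bracket on $\wedge\mathfrak{g}^*$. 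In other words, $((\wedge\mathfrak{g}^*)[1],d,[~,~]_{L'})$ is the abelian DGLA $((\wedge\mathfrak{g}^*)[1],d,0)$. The two transversals $L=\mathfrak{g}^*$ and $L'=\mathrm{graph}(\varepsilon)$ differ by the exact automorphism $e^{\varepsilon}$ of the Courant algebroid, so they are related precisely as in the hypothesis of Corollary~\ref{cor:independence_of_L_infty_from_the_choice_of_complement} (with the roles of $M$ and $L$ there played here by $M=\mathfrak{g}$ and $L=\mathfrak{g}^*$, and with the gauge parameter $\varepsilon\in\wedge^2\mathfrak{g}=\mathcal{X}^2_M$).

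With these identifications in place, Corollary~\ref{cor:independence_of_L_infty_from_the_choice_of_complement} yields a canonical $L_\infty$-isomorphism, namely $e^{R_\varepsilon}$, between the $L_\infty$ structure attached to $(M,L)$ — which is the DGLA $((\wedge\mathfrak{g}^*)[1],d,[~,~]_*)$ — and the $L_\infty$ structure attached to $(M,L')$ — which is the abelian DGLA $((\wedge\mathfrak{g}^*)[1],d,0)$. Since $L_\infty$-isomorphism is in particular an $L_\infty$-quasi-isomorphism, and an $L_\infty$ algebra that is $L_\infty$-equivalent to a DGLA with vanishing bracket is by definition $L_\infty$-formal, this proves the theorem. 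I would also note that since both $L=\mathfrak{g}^*$ and $L'=\mathrm{graph}(\varepsilon)$ are genuinely involutive (the latter by $[\varepsilon,\varepsilon]=0$), there is no Nijenhuis tensor on either side, so we stay in the DGLA world throughout and no curvature or cubic term appears — this is why the conclusion is formality as DGLAs rather than merely as cubic $L_\infty$ algebras.

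The main obstacle, and the only place that requires genuine verification rather than bookkeeping, is the claim that the pair $(M=\mathfrak{g},\,L'=\mathrm{graph}(\varepsilon))$ really does produce the abelian DGLA, i.e. that the induced bracket $[~,~]_{L'}$ on $\wedge\mathfrak{g}^*$ vanishes identically and not just in arity one. Concretely this amounts to checking that, in the double $\mathfrak{g}\oplus\mathfrak{g}^*$ of the bialgebra with zero cobracket, the Lie algebroid $\mathfrak{g}$ has abelian structure and hence its induced bracket on $\wedge M^* = \wedge\mathfrak{g}^*$ is zero — equivalently, that the Nijenhuis-free transversal $L'=\mathrm{graph}(\varepsilon)$ sees $M$ as carrying the trivial bracket because $\mathfrak{g}^*\subset\mathfrak{d}_0$ is abelian when the cobracket is zero. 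This is a short computation with the explicit formula $[x^*,y^*]_*=[\![\varepsilon(x^*),\varepsilon(y^*)]\!]$ together with the structure of the double, but it is the substantive step; everything else is an application of the machinery already established.
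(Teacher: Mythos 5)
Your overall strategy is the paper's: realize both $((\wedge\mathfrak{g}^*)[1],d,[~,~]_*)$ and the abelian DGLA as the $L_\infty$ structures attached to two transversals of $M=\mathfrak{g}$ differing by $e^{\varepsilon}$, and invoke Corollary~\ref{cor:independence_of_L_infty_from_the_choice_of_complement}. But there is a genuine gap at exactly the point you flag as the substantive step, and it comes from conflating two different ambient quadratic Lie algebras. You take $L=\mathfrak{g}^*$ to induce $[~,~]_*$, which forces the ambient double to be the double of the actual triangular bialgebra $(\mathfrak{g},[~,~],[~,~]_*)$; but you then justify the vanishing of the bracket induced by $L'=\mathrm{graph}(\varepsilon)$ by appealing to the abelianness of $\mathfrak{g}^*$ inside the double of $(\mathfrak{g},[~,~],0)$. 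Those are two different Lie brackets on $\mathfrak{g}\oplus\mathfrak{g}^*$, and the Corollary requires both transversals to be Lagrangian subalgebras of one and the same quadratic Lie algebra. As written, the claim that the pair $(\mathfrak{g},\mathrm{graph}(\varepsilon))$ yields the zero bracket is unsupported; moreover, in the double of the triangular bialgebra it is (with the usual sign conventions) the graph of $-\varepsilon$ rather than $+\varepsilon$ that is the abelian Lagrangian subalgebra, so even the sign of your gauge parameter is at risk.

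The paper sidesteps all of this by working from the outset inside the double of $(\mathfrak{g},[~,~],0)$, i.e. the semidirect product $\mathfrak{g}\ltimes\mathfrak{g}^*$ (this is set up in the paragraph immediately preceding the theorem). There the transversal $\mathfrak{g}^*$ is abelian for free, so the pair $(\mathfrak{g},\mathfrak{g}^*)$ gives the abelian DGLA with no work, and the only computation needed is the one recorded in the text: $\mathrm{graph}(\varepsilon:\mathfrak{g}^*\to\mathfrak{g})$ is a Lagrangian subalgebra by the classical Yang--Baxter equation, and under the identification with $\mathfrak{g}^*$ it carries exactly the bracket $[~,~]_*$, so the pair $(\mathfrak{g},\mathrm{graph}(\varepsilon))$ gives $((\wedge\mathfrak{g}^*)[1],d,[~,~]_*)$. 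The assignment of brackets to transversals is therefore the opposite of yours. Your version can be repaired either by switching to the zero-cobracket double as the paper does, or by staying in the double of the triangular bialgebra and verifying directly (or via the standard twist isomorphism between the two doubles) that the graph of $-\varepsilon$ is an abelian Lagrangian subalgebra there; once the two transversals live coherently in a single double, Corollary~\ref{cor:independence_of_L_infty_from_the_choice_of_complement} applies and $e^{\pm R_\varepsilon}$ is the formality map, as in the paper's one-line proof.
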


\begin{proof}
As in Corollary \ref{cor:independence_of_L_infty_from_the_choice_of_complement}, the $L_\infty$-automorphism sending the DGLA $((\wedge \mathfrak{g}^*)[1], d, [~,~]_*)$ to the abelian DGLA $((\wedge \mathfrak{g}^*)[1], d)$
is given by $e^{-R_\varepsilon}$. 
\end{proof}

Let us now discuss the case of quasi-triangular $r$-matrix.
Let $\mathfrak{g}$ be a simple complex Lie algebra, and let 
$\langle\cdot,\cdot\rangle \in (Sym^2\mathfrak{g}^*)^\mathfrak{g}$ be a non-degenerate symmetric invariant pairing (whish has to be a multiple of the Killing form).
Fix a decomposition $\mathfrak{g} = \mathfrak{n_-}\oplus \mathfrak{h} \oplus \mathfrak{n_+}$ into the negative nilpotent, Cartan, 
and the positive nilpotent subalgebras.
Let $\eta \in (\wedge^3 \mathfrak{g})^\mathfrak{g}$ be the corresponding Cartan $3$-form, i.e. $\eta(x,y,z) =\frac{1}{2} \langle[x,y],z\rangle$, $x,y,z\in \mathfrak{g}$.
The theorem of Belavin-Drinfeld \cite{BD} describes the solutions of the modified Yang-Baxter equation
\begin{equation}\label{CYBE}
 [r,r]= \eta^{-1},
\end{equation}
where $r\in \wedge^2 \mathfrak{g}$, and $\eta^{-1}$ is the inverse of $\eta$ relative to $\langle\cdot,\cdot\rangle$. 
Such solutions $r$ are called quasi-triangular $r$-matrices,
and are parametrized, up to a sign, by Belavin-Drinfeld triples (which are pairs of two subsets $\Gamma_1,\Gamma_2$ of the corresponding Dynkin diagram
and a a bijection between them satisfying certain nilpotency condition). The simplest solution $r_{st}$ to the classical Yang-Baxter equation is given by 
the skew-symmetrization of the canonical tensor $Id\in \mathfrak{n_-}^*\otimes \mathfrak{n_-}\equiv \mathfrak{n_+}\otimes \mathfrak{n_-}$.

Every solution of \eqref{CYBE} gives rise to a cobracket $\delta r$ on $\mathfrak{g}$, thus defining a Lie bialgebra strcture on $\mathfrak{g}$.
As in the case of triangular Lie bialgebras, one can ask whether the DGLA $((\wedge \mathfrak{g}^*)[1], d, [~,~]_*)$ is $L_\infty$-formal or not.
The answer is again affirmative, although this time the proof is not immediate and requires integration to the group $G$.
%It would be nice to obtain a self-contained, ''finite-dimensional`` proof, too.

\begin{theorem} \label{formality_of_quasitriangular_Lia_bialgebra_DGLA} 
Let $\mathfrak{g}$ be a simple complex Lie algebra, and $(\mathfrak{g},[~,~],[~,~]_*,r)$ be a quasi-triangular Lie bialgebra structure on it.
Then the DGLA $((\wedge \mathfrak{g}^*)[1], d, [~,~]_*)$ is $L_\infty$-formal. 
\end{theorem}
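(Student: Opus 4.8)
The plan is to reduce the statement, which concerns a Lie bialgebra living inside a non-exact Courant algebroid, to the Dirac-geometric situation over the group $G$, where our main Corollary~\ref{cor:independence_of_L_infty_from_the_choice_of_complement} applies and where a suitable complement can be exhibited as a genuine (involutive) Dirac structure. Concretely, let $G$ be the simply-connected simple complex group integrating $\mathfrak{g}$, so that $\TT G \cong G \times (\mathfrak{g} \oplus \overline{\mathfrak{g}})$ with the Cartan $3$-form twist, as used in Section~\ref{Subsec:lie_groups}. Under this identification the Cartan--Dirac structure $E_G$ corresponds to the diagonal $\Delta \subset \mathfrak{d} = \mathfrak{g} \oplus \overline{\mathfrak{g}}$, and the cobracket $\delta r$ determines a Lagrangian \emph{subalgebra} $L_r \subset \mathfrak{d}$ transverse to $\Delta$ (this is exactly the double of the quasi-triangular bialgebra $(\mathfrak{g}, [~,~], [~,~]_*)$, realized inside $\mathfrak{d}$ via the graph of $r$ shifted by the splitting into $\mathfrak{n}_-\oplus\mathfrak{h}\oplus\mathfrak{n}_+$). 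Because $\mathfrak{d}$ is a quadratic Lie algebra, $L_r$ is automatically a Dirac structure on $G$, right-invariant, and complementary to $E_G$; moreover the induced DGLA on $\wedge M^*[1]$ (with $M$ the Lagrangian dual to $L_r$) restricts on right-invariant forms to precisely $((\wedge\mathfrak{g}^*)[1], d_{CE}, [~,~]_*)$.

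The second step is to compare $L_r$ with a \emph{formal} choice of complement. The key input is the Belavin--Drinfeld classification: every quasi-triangular $r$-matrix can be written as $r = r_{st} + (\text{continuous parameters and discrete data})$, and crucially all the $r$'s satisfying $[r,r]=\eta^{-1}$ differ from one another only by an element $\varepsilon \in \wedge^2\mathfrak{g}$ satisfying $[\varepsilon, \cdot]$-type relations; in particular any two Lagrangian subalgebras arising from solutions of \eqref{CYBE} are related by $L' = e^{\varepsilon} L$ for $\varepsilon \in \mathcal{X}^2_M$. I would therefore fix one convenient quasi-triangular $r$-matrix whose associated transversal is manifestly formal --- the natural candidate being the one corresponding to the Gauss-type decomposition, whose transversal in $\mathfrak{d}$ is, after the $e^{\eta/2}$-twist relating the Cartan and Gauss pictures (as in Theorem~\ref{Thm:cartan_vs_gauss_three_term_L_infty}), a Lagrangian complement on which the induced DGLA bracket vanishes identically, so that the DGLA degenerates to $((\wedge\mathfrak{g}^*)[1], d_{CE}, 0)$. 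Applying Corollary~\ref{cor:independence_of_L_infty_from_the_choice_of_complement} to the pair $(M, L_r)$ versus $(M, L_{\text{Gauss}})$ then produces an explicit $L_\infty$-isomorphism $e^{R_\varepsilon}$ between $((\wedge\mathfrak{g}^*)[1], d_{CE}, [~,~]_*)$ and $((\wedge\mathfrak{g}^*)[1], d_{CE}, 0)$, and the latter is its own cohomology Lie algebra (with zero bracket, hence abelian), giving formality.

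There are two places where care is needed. First, one must check that the transversal coming from the chosen reference $r$-matrix really does yield a \emph{vanishing} (or at least abelian-after-cohomology) bracket --- this is where I expect the main obstacle, since for a general simple $\mathfrak{g}$ the ``obviously formal'' complement is not $\overline{\Delta}$ (that one has nonzero Nijenhuis tensor and gives the cubic bracket $\iota_{\eta^{-1}}$ of Theorem~\ref{Thm:cartan_vs_gauss_three_term_L_infty}, which is \emph{not} formal on the nose), so one needs the genuinely involutive Gauss-type Lagrangian whose existence relies on the triangular decomposition and on $\mathfrak{g}$ being split (which holds since $\mathfrak{g}$ is complex). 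Verifying that this Lagrangian is a subalgebra, is transverse to $M$, and gives zero bracket is the crux; it should follow from the fact that its pure spinor is a product of a top form on $\mathfrak{n}_+$ and an exponential, making the relevant component $d_{-1}$ of $d_H$ act trivially. Second, one must ensure the $L_\infty$-isomorphism $e^{R_\varepsilon}$ restricts from differential forms on $G$ to right-invariant forms, i.e. to $\wedge\mathfrak{g}^*$; this is immediate because $\varepsilon$ is itself right-invariant, so $R_\varepsilon$ commutes with the $G$-action, exactly as in the quasi-Poisson restriction argument used for Corollary~\ref{cor:non_integrable_bivectors}. Assembling these, the theorem follows.
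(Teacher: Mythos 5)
Your strategy---realize $((\wedge \mathfrak{g}^*)[1], d, [~,~]_*)$ via a pair of transverse Lagrangians in $\mathfrak{d}=\mathfrak{g}\oplus\overline{\mathfrak{g}}$ and then switch to a ``manifestly formal'' complement using Corollary~\ref{cor:independence_of_L_infty_from_the_choice_of_complement}---fails at exactly the step you flag as the crux. For the induced DGLA bracket to vanish identically, the reference complement would have to be an \emph{abelian} Lagrangian subalgebra of $\mathfrak{d}$ transverse to $\Delta$, and no such subalgebra exists in general: already for $\mathfrak{g}=\mathfrak{sl}_2$ a Lagrangian complement is $3$-dimensional, while every abelian subalgebra of $\mathfrak{sl}_2\oplus\mathfrak{sl}_2$ has dimension at most $2$ (projections of abelian subalgebras are abelian, and maximal abelian subalgebras of $\mathfrak{sl}_2$ are lines). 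The Gauss-type Lagrangian $\{(h+x_+,\,-h+x_-): h\in\mathfrak{h},\ x_\pm\in\mathfrak{n}_\pm\}$ you propose as the reference is indeed involutive and transverse to $\Delta$, but it is solvable and non-abelian, and the DGLA it induces on $\wedge\Delta^*\cong\wedge\mathfrak{g}^*$ is precisely the standard-bialgebra DGLA $((\wedge \mathfrak{g}^*)[1], d, [~,~]_{*,\mathrm{st}})$---an instance of the very statement to be proved, so the argument becomes circular. The assertion that its pure spinor forces $d_{-1}$ to act trivially is not correct.

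The paper avoids this by genuinely leaving the finite-dimensional double: by Drinfeld's theorem the bialgebra integrates to a Poisson--Lie group $(G,\pi)$, and $((\wedge \mathfrak{g}^*)[1], d, [~,~]_*)$ embeds as the right-invariant forms inside the Koszul DGLA $(\Omega_G[1], d, [~,~]_\pi)$. Over $G$ the exact Courant algebroid $\TT G$ \emph{does} admit an abelian complement to $TG$, namely $T^*G$, so Theorem~\ref{Thm:Formality_of_Koszul_DGLA} gives formality of the ambient DGLA; since $G$ is simple, the inclusion of right-invariant forms is a quasi-isomorphism (deformation retract onto the compact form and average), and formality descends along a quasi-isomorphism. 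If you want to salvage your outline, this passage to $(G,\pi)$ and to the untwisted algebroid $TG\oplus T^*G$ is the missing ingredient; the comparison of complements cannot be carried out inside $\mathfrak{d}$ alone.
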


\begin{proof}
Let $G$ be the simply connected Lie group integrating $\mathfrak{g}$.
By Drinfeld's theorem, the Lia bialgebra $\mathfrak{g}$ integrates to a Poisson-Lie group $(G,\pi)$.
The DGLA $((\wedge \mathfrak{g}^*)[1], d, [~,~]_*)$ embeds into the de Rham-Koszul DGLA $(\Omega_G[1], d, [~,~]_\pi)$
as the sub-DGLA of right invariant forms $(\Omega_G)^{R}[1]$.
Since the group $G$ is simple, the inclusion map $(\Omega_G)^{R}\hookrightarrow \Omega_G$ is quasi-isomorphism. 
To see the that, one needs to use the fact that $G$ deformation retracts onto 
its compact form and then apply the standard averaging technique there.

Now, we reduced the question of $L_\infty$-formality of the finite dimensional DGLA $((\wedge \mathfrak{g}^*)[1], d, [~,~]_*)$ to the $L_\infty$-formality of the infinite-dimensional
DGLA $(\Omega_G[1], d, [~,~]_\pi)$. However, we already know by Theorem~\eqref{Thm:Formality_of_Koszul_DGLA} that the latter is indeed formal.
\end{proof}

\begin{corollary}\label{Cor:formality_of_cartan_vs_gauss_three_term_L_infty}
 If $\mathfrak{g}$ is a simple complex Lie algebra, then the $L_\infty$ algebra constructed in the Theorem \eqref{Thm:cartan_vs_gauss_three_term_L_infty} is formal.
\end{corollary}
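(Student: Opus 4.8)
The plan is to realize the cubic $L_\infty$ algebra of Theorem~\ref{Thm:cartan_vs_gauss_three_term_L_infty} as an $L_\infty$ algebra that is $L_\infty$-isomorphic to the Lie bialgebra DGLA of Theorem~\ref{formality_of_quasitriangular_Lia_bialgebra_DGLA}, so that formality of the latter transports to the former. Recall that the $L_\infty$ algebra of Theorem~\ref{Thm:cartan_vs_gauss_three_term_L_infty} is, by construction, the one attached to the pair $(M,L)=(\Delta,\overline{\Delta})$ of transverse Lagrangians inside the double $\mathfrak{d}=\mathfrak{g}\oplus\overline{\mathfrak{g}}$, with $d_M=d_{CE}$ because $\Delta\cong\mathfrak{g}$ as a Lie algebra, and that by Corollary~\ref{cor:independence_of_L_infty_from_the_choice_of_complement} its $L_\infty$-isomorphism class depends only on $\Delta$ and not on the transversal chosen.

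Since $\mathfrak{g}$ is simple and complex, I would first fix a quasi-triangular $r$-matrix $r\in\wedge^2\mathfrak{g}$ solving~\eqref{CYBE} (such $r$ exist, e.g.\ the standard solution $r_{st}$), and let $\mathfrak{g}^*_r$ be $\mathfrak{g}^*$ with the Lie bracket dual to the cobracket $\delta r$. The key classical input is the factorization of the Drinfeld double of the factorizable Lie bialgebra $(\mathfrak{g},\delta r)$: this double is isomorphic, as a quadratic Lie algebra, to $\mathfrak{d}=\mathfrak{g}\oplus\overline{\mathfrak{g}}$, in such a way that $\mathfrak{g}$ is sent to the diagonal $\Delta$ and $\mathfrak{g}^*$ to a Lagrangian subalgebra $L'\subseteq\mathfrak{d}$ complementary to $\Delta$ (complementarity being automatic, since $\mathfrak{g}$ and $\mathfrak{g}^*$ are always transverse inside a Drinfeld double). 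In particular $L'$ is an \emph{involutive} transversal to $M=\Delta$, so the $L_\infty$ algebra produced by the pair $(\Delta,L')$ has vanishing ternary bracket, and under the identification $L'\cong\Delta^*\cong\mathfrak{g}^*$ it is precisely the DGLA $((\wedge\mathfrak{g}^*)[1],d_{CE},[\,\cdot\,,\,\cdot\,]_*)$ of Theorem~\ref{formality_of_quasitriangular_Lia_bialgebra_DGLA}.

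Next I would invoke Corollary~\ref{cor:independence_of_L_infty_from_the_choice_of_complement} for the two transversals $\overline{\Delta}$ and $L'$ of $\Delta$ (working either directly in the constant Courant algebroid $\mathfrak{d}$, where our constructions apply without change, or, if an exact Courant algebroid is preferred, after transporting everything along $\TT G\cong G\times\mathfrak{d}$ and restricting to right-invariant forms). This yields an $L_\infty$-isomorphism of the form $e^{R_\varepsilon}$ from the cubic $L_\infty$ algebra of Theorem~\ref{Thm:cartan_vs_gauss_three_term_L_infty} to the DGLA of Theorem~\ref{formality_of_quasitriangular_Lia_bialgebra_DGLA}. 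Composing it with the formality quasi-isomorphism furnished by Theorem~\ref{formality_of_quasitriangular_Lia_bialgebra_DGLA} gives a quasi-isomorphism from the cubic $L_\infty$ algebra to its cohomology equipped with the induced bracket; since that bracket is the image of $\ell_2=0$ under an $L_\infty$-isomorphism it vanishes, so the cubic $L_\infty$ algebra is quasi-isomorphic to the abelian $L_\infty$ algebra $(H^\bullet(\mathfrak{g}),0)=(H_{\ell_1},\ell_2)$, i.e.\ it is formal.

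The step I expect to be the real obstacle is the identification in the second paragraph: writing down the embedding $\mathfrak{g}^*_r\hookrightarrow\mathfrak{d}=\mathfrak{g}\oplus\overline{\mathfrak{g}}$ explicitly and checking on the nose that the Lie algebroid differential of $\Delta$ together with the bracket induced by the transversal $L'$ reproduce $d_{CE}$ and $[\,\cdot\,,\,\cdot\,]_*$. This is the standard factorization picture for the double of a factorizable Lie bialgebra, but it requires careful bookkeeping of the $\pm$ inner products on $\mathfrak{g}\oplus\overline{\mathfrak{g}}$, of the normalization $[r,r]=\eta^{-1}$, and of the accompanying factors of $\tfrac12$; once that matching is in hand, the remaining steps are formal consequences of Corollary~\ref{cor:independence_of_L_infty_from_the_choice_of_complement} and Theorem~\ref{formality_of_quasitriangular_Lia_bialgebra_DGLA}.
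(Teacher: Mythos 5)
Your proposal is correct and follows essentially the same route as the paper: the paper also fixes a quasi-triangular $r$-matrix, realizes the Lie bialgebra DGLA $((\wedge\mathfrak{g}^*)[1],d,[~,~]_*)$ as the $L_\infty$ algebra of the pair $(\Delta,L)$ with $L=\mathrm{graph}(r:\overline{\Delta}\to\Delta)$ (which is exactly the image of $\mathfrak{g}^*$ under the factorization embedding you describe), and then applies Corollary~\ref{cor:independence_of_L_infty_from_the_choice_of_complement} to compare with the transversal $\overline{\Delta}$ and Theorem~\ref{formality_of_quasitriangular_Lia_bialgebra_DGLA} to conclude formality.
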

\begin{proof}
 Choose a quasi-triangular $r$-matrix, say $r_{st}$. Then we can view $r\in \wedge^2\mathfrak{g}$ as a skew map $\overline{\Delta}\to \Delta$, 
where $\overline{\Delta}\cong \mathfrak{g}^*$, $\Delta\cong \mathfrak{g}$ (we use the notations from Section \ref{Subsec:lie_groups}).
Then the graph of $L=graph\{r:\overline{\Delta}\to \Delta\}$ inside $\mathfrak{d}$ is a Lagrangian subalgebra, transverse to $\Delta$.
The splitting $(\Delta,\overline{\Delta})$ induces the $L_\infty$ algebra in Theorem \eqref{Thm:cartan_vs_gauss_three_term_L_infty}, 
while the splitting $(\Delta,L)$ induces the $L_\infty$ algebra $((\wedge \mathfrak{g}^*)[1], d, [~,~]_*)$. 
Therefore, as in Corollary \ref{cor:independence_of_L_infty_from_the_choice_of_complement}, these two $L_\infty$ algebras are $L_\infty$-isomorphic.
Theorem \ref{formality_of_quasitriangular_Lia_bialgebra_DGLA} ensures $L_\infty$-formality of the latter one, so we deduce this way $L_\infty$-formality of the former one.
\end{proof}

\subsection{Complex manifolds}\label{KSTH}
In this section we explain how our results may be used to study the deformation theory of a fixed deformation of a complex manifold $X$. In Theorem~\ref{thm:exthml}, we explain how a simple modification of the original Kodaira-Spencer deformation complex may be used to describe the deformations of a particular small deformation of $X$.

Let $I$ be the complex structure on the manifold $X$. The endomorphism $I$ determines and is determined by a pair of transverse complex Dirac structures 
\[
M=T_{1,0}\oplus T^*_{0,1},\qquad L=T_{0,1}\oplus T^*_{1,0}.
\]
The Kodaira-Spencer DGLA $(\Omega^{0,\bullet}(T_{1,0}), \delbar, [~,~])$ controlling deformations of complex structure is a sub-DGLA of the one controlling deformations of $M$, which is the ``extended deformation complex'' given by 
\[
(\Omega_M[1], d_M, [~,~]_L) = (\Omega^{0,\bullet}(\wedge^{\bullet}T_{1,0})[1], \delbar, [~,~]).
\]
While the results of Section~\ref{section:proof} concern deformations of Dirac structures, we may in this case apply them to the Kodaira-Spencer sub-DGLA.

We begin with a small deformation of the complex structure $I$, described by the Maurer-Cartan element $\phi\in\Omega^{0,1}(T_{1,0})$.  Viewing this element as a map $\phi:T_{0,1}\to T_{1,0}$, we obtain a description of the deformed Dirac structures $M^\phi, L^\phi$ as graphs of maps $\Phi:M\to L$, 
$\overline{\Phi}: L \to M$ respectively, given by the block matrix
% the Kodaira-Spencer map $\phi:T_{0,1}\to T_{1,0}$
%The integrability of $I^\phi$ is equivalent to the Maurer-Cartan equation
%$$\label{MC_KS}
%\delbar \phi + \frac{1}{2}[\phi,\phi]=0.
%$$
%where $\delbar$ stands for the $\delbar$ operator of the holomorphic tangent bundle $T_{1,0}$, and the bracket $[~,~]$ is the classical bracket from the Kodaira-Spencer theory. 
$$
\begin{array}{llc}
%M^\phi= &  graph \{ \Phi: M \to L\} \\
\Phi = &
\begin{pmatrix}
\phi & & 0 \\
\\
0 & & - \phi^* 
\end{pmatrix}
:&
\begin{matrix}
T_{0,1} & &T_{1,0} \\
\oplus & \longrightarrow &  \oplus\\
T_{1,0}^* & & T_{0,1}^* 
\end{matrix}
\end{array}
$$
%$$
%\begin{array}{llc}
%%L^\phi= &  graph \{ \overline{\Phi}: L \to M\} \\
%\overline{\Phi} = &
%\begin{pmatrix}
%\overline{\phi} & & 0 \\
%\\
%0 & & - \overline{\phi}^* 
%\end{pmatrix}
%:&
%\begin{matrix}
%T_{1,0} & &T_{0,1} \\
%\oplus & \longrightarrow &  \oplus\\
%T_{0,1}^* & & T_{1,0}^* 
%\end{matrix}
%\end{array}
%$$
and its complex conjugate.  Therefore, the tensor $\Phi$ defines a Maurer-Cartan element in the larger DGLA, that is, viewed as an element $\Phi\in\Omega^2_M$ we have 
\begin{equation}\label{eq:MC_equation_for_Phi}
d_M \Phi + \frac{1}{2}[\Phi,\Phi]_L = 0.
\end{equation}
%The integrability condition for either (hence, both) of $M^\phi$ and $L^\phi$ as Dirac structures is equivalent to integrability of the $I^\phi$, 
%and can be phrased in terms of $\Phi \in \Omega^{01}(T_{0,1})\subset \Omega^2_M$ by insisting
%In other words, $\Phi$ has to be a Maurer-Cartan element of the DGLA $(\Omega_M[1], d_M, [~,~]_L)$, 
%which contains the Kodaira-Spencer DGLA $(\Omega^{0,\bullet}(T_{1,0})[1], \delbar, [~,~])$ as a sub-DGLA 
%(note that $\Omega_M\cong \Omega^{0,\bullet}(T_{\bullet,0})$).

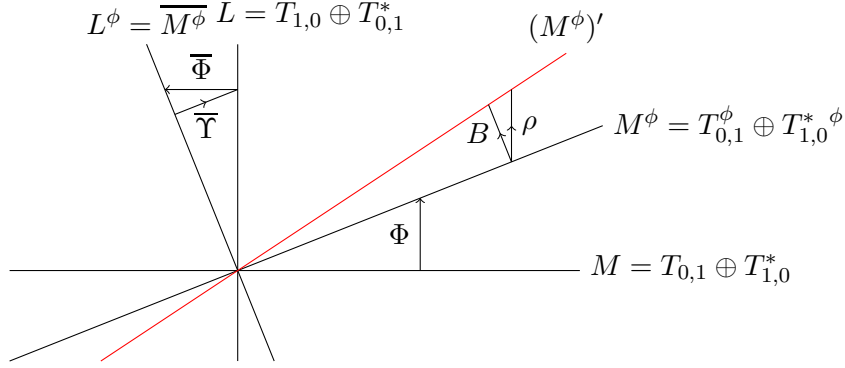
\begin{figure}[!ht]
\centering
\begin{tikzpicture}[scale=3,decoration={
    markings,
    mark=at position 0.5 with {\arrow{>}}}]
%    % Draw axes
%    \draw [<->,thick] (0,2) node (yaxis) [above] {$y$}
%        |- (3,0) node (xaxis) [right] {$x$};
    % Draw two intersecting lines

    \draw (1,0.6) coordinate (L1) -- (1,2) coordinate[label=above:{$~~~~~~~~~~~~~~~L=T_{1,0}\oplus T^*_{0,1}$}] (L2);
    \draw (0,1) coordinate (M1) -- (2.5,1) coordinate[label=right:{$M=T_{0,1}\oplus T^*_{1,0}$}] (M2) ;
    \coordinate (O) at (intersection of L1--L2 and M1--M2);
	\coordinate (Lphi1) at (1.16,0.6); 
	\coordinate[label=above:{$L^\phi=\overline{M^\phi}$}] (Lphi2) at ($(Lphi1)!3.5!(O)$); 
	\draw (Lphi1)--(Lphi2);
	\coordinate (Phibar1) at ($(O)+(0,0.8)$); 
	\coordinate (Phibar0) at ($(Phibar1)+(1,0)$);
	\coordinate (Phibar2) at (intersection of Lphi1--Lphi2 and Phibar1--Phibar0); 
	\draw[->] (Phibar1) to node [above] {$\overline{\Phi}$} (Phibar2);
	\coordinate (Mphi1) at (0, 0.6);
	\coordinate[label=right:{$M^\phi=T_{0,1}^\phi\oplus {T^*_{1,0}}^\phi$}] (Mphi2) at ($(Mphi1)!2.6!(O)$); 
	\draw (Mphi1)--(Mphi2);
	\coordinate (Psi3) at ($(Phibar1) +0.3*(Mphi1)-0.3*(Mphi2)$);
	\coordinate (Psi2) at (intersection of Phibar1--Psi3 and Lphi1--Lphi2);
	\draw[postaction={decorate}] (Psi2) to node[below] {$\overline{\Upsilon}$} (Phibar1);
	\coordinate (M_def1) at (0.4, 0.6); 
	\coordinate[label=above:{$(M^\phi)'$}] (M_def2) at ($(M_def1)!3.4!(O)$); 
	\draw[red] (M_def1)--(M_def2);
	\coordinate (rho2) at ($(M_def1)!3!(O)$); 
	\coordinate (pho0) at ($(rho2)-0.3*(L2)+0.3*(L1)$);
	\coordinate (m3) at (intersection of Mphi1--Mphi2 and rho2--pho0);
	\coordinate (g3) at ($(m3)-0.3*(Lphi2)+0.3*(Lphi1)$); 
	\coordinate (m4) at (intersection of M_def1--M_def2 and m3--g3);
	\draw[postaction={decorate}] (m3) to node [left] {$B$} (m4);
	\draw[postaction={decorate}] (m3)to node [right] {$\rho$} (rho2);
	
	\coordinate (Phi1) at ($(O)+(0.8,0)$); 
	\coordinate (Phi0) at ($(Phi1)+(0,1)$);
	\coordinate (Phi2) at (intersection of Mphi1--Mphi2 and Phi1--Phi0); 
	\draw[->] (Phi1) to node [left] {${\Phi}$} (Phi2);
    % Draw a dot to indicate intersection point
%    \fill[red] (c) circle (1pt);
\end{tikzpicture}
\caption{Description of a deformation $(M^\phi)'$ of a deformation $M^\phi$ of $M$. The Maurer-Cartan elements $B$ and $\rho$ are in $L_\infty$-isomorphic DGLAs.}
\end{figure}

To describe the deformations of the complex structure $I^\phi$, one would normally consider the Kodaira-Spencer DGLA of the deformed complex structure, 
that is the DGLA $(\Omega_\phi^{0,\bullet}(T^\phi_{1,0}), \delbar_\phi, [~,~]_\phi)$, where $T^\phi_{1,0} = (\Id + \ol\phi)T_{1,0}$ is the deformed holomorphic tangent bundle. As before, this is a sub-DGLA of the deformation complex $(\Omega_{M^\phi}[1], d_{M^\phi}, [~,~]_{L^\phi})$ associated to the pair of Dirac structures $(M^\phi,L^\phi)$. 

Instead of using the deformed Kodaira-Spencer DGLA, we may select an alternative complement to $M^\phi$ and employ the DGLA associated to the pair of Dirac structures $(M^\phi, L)$; by Corollary~\ref{cor:independence_of_L_infty_from_the_choice_of_complement}, the resulting DGLA describes the same deformation theory (i.e., deformations of $M^\phi$).  Furthermore, we may use the isomorphism $(\Id +\Phi):M\to M^\phi$ to transport this alternative DGLA structure from $\Omega^\bullet_{M^\phi}$ to $\Omega^\bullet_M$; this provides a new DGLA structure on $\Omega^\bullet_M$ controlling deformations of $I^\phi$.  In the remainder of this section, we describe the details of this procedure.

\begin{lemma}\label{lm: KS_deformed_differential}
 The vector bundle map $(\Id + \Phi):M\to M^\phi$ induces a DGLA isomorphism 
\[
(\Omega_{M^\phi}[1], d_{M^\phi}, [~,~]_L) \longrightarrow (\Omega_M[1], d_M + [\Phi,-]_L, [~,~]_L)
\]
taking the DGLA $(\Omega^{0,\bullet}_\phi(T^\phi_{1,0}), \delbar_\phi, [~,~])$ to the original Kodaira-Spencer complex,  equipped with deformed differential, 
namely to the DGLA 
$
(\Omega^{0,\bullet}(T_{1,0}), \delbar', [~,~]),
$
where 
\[
\delbar'=\del+[\phi,-].
\]
\end{lemma}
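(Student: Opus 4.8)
The plan is to reduce the lemma to the $BV_\infty$-torsor machinery of Section~\ref{section:proof}, running an argument parallel to the proof of Theorem~\ref{Thm:derived_bracket_construction_preserves_gauge_action} but conjugating by a bivector living in $L$ rather than in $M$. Write $M^\phi=e^{\Phi}M$, where $\Phi\in\Omega^2_M$ is identified with an element of $\mathcal{X}^2_L$ via the inner product; then pullback along $(\Id+\Phi)$ is an isomorphism of graded algebras $(\Id+\Phi)^*\colon\Omega_{M^\phi}\to\Omega_M$. The first step is to notice that, after the canonical identifications $\Omega_{M^\phi}\cong\mathcal{X}_L\cong\Omega_M$ coming from the inner product, this map is the \emph{identity} of $\mathcal{X}_L$: on degree-one elements this is because $\langle\ell,(\Id+\Phi)m\rangle=\langle\ell,m\rangle$ for $m\in M$, $\ell\in L$ (as $\Phi(m)\in L$ pairs trivially with $L$), and an algebra map is determined by its restriction to degree one. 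In particular $(\Id+\Phi)^*$ already intertwines the two brackets $[\,\cdot\,,\cdot\,]_L$, both of which are the Lie algebroid bracket of $L$ extended as a wedge biderivation, so the whole content of the lemma is in the differential.

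For the differential, I would pass to the associated $BV_\infty$ torsors. The Clifford action of $e^{\Phi}$ sends the pure spinor line $K_M$ to $K_{M^\phi}$, and since $\wedge^\bullet L$ is a commutative subalgebra of the Clifford algebra, $e^{\Phi}$ commutes with the $\mathcal{X}_L$-action and sends the $M\oplus L$-grading $U_\bullet$ on $\Omega$ to the $M^\phi\oplus L$-grading degree by degree. Hence $e^{-\Phi}$ is an isomorphism of $BV_\infty$ torsors from $(\mathcal{X}_L,\Omega,d_H)$ graded by $U^\phi$ to $(\mathcal{X}_L,\Omega,e^{-\Phi}d_He^{\Phi})$ graded by $U$; since isomorphic $BV_\infty$ torsors produce identical $L_\infty$ structures (as observed before Theorem~\ref{Thm:derived_bracket_construction_preserves_gauge_action}), the $L_\infty$ structure of the pair $(M^\phi,L)$ agrees, on $\mathcal{X}_L[1]$, with $L_\infty(e^{-\Phi}d_He^{\Phi})$ relative to $U$.

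It then remains to compute $L_\infty(e^{-\Phi}d_He^{\Phi})$. Expanding $e^{-\Phi}d_He^{\Phi}=d_H+[d_H,\Phi]+\tfrac12[[d_H,\Phi],\Phi]+\cdots$ and using that $\Phi$ is a zeroth-order operator on $\Omega$ (it is multiplication by an element of the commutative algebra $\mathcal{X}_L$), one checks that the series terminates and that, by $U$-degree, the $(-1)$-piece is unchanged ($\del$), the $(+1)$-piece is $\delbar+[\del,\Phi]$, and the $(+3)$-piece is $[\delbar,\Phi]+\tfrac12[[\del,\Phi],\Phi]$. By \eqref{eq:quantized_derived_brackets}--\eqref{eq:derived_brackets} this means $m_2$ is still $[\,\cdot\,,\cdot\,]_L$; $m_1$ becomes $d_M+[\Phi,\cdot\,]_L$ (using the graded Jacobi identity, $[\Phi,\alpha]=0$ in $\mathcal{X}_L$, and the Koszul signs of~\eqref{eq:symm_to_skew}); and $m_0$ becomes $d_M\Phi+\tfrac12[\Phi,\Phi]_L$, which vanishes by the Maurer-Cartan equation~\eqref{eq:MC_equation_for_Phi}, so the structure is flat. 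Transporting back by $(\Id+\Phi)^*$ identifies it with $(\Omega_M[1],d_M+[\Phi,\cdot\,]_L,[\,\cdot\,,\cdot\,]_L)$, which is the first assertion. (One can instead verify $(\Id+\Phi)^*d_{M^\phi}=(d_M+[\Phi,\cdot\,]_L)(\Id+\Phi)^*$ by a direct computation with the Lie algebroid differential of $M^\phi$ and the Courant bracket of $\Gamma(M^\phi)=\{m+\Phi m\}$, again using~\eqref{eq:MC_equation_for_Phi}.)

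For the last statement, the point is that $\Phi$ is block-diagonal for $M=T_{1,0}\oplus T^*_{0,1}$ and $L=T_{0,1}\oplus T^*_{1,0}$, so $(\Id+\Phi)$ is block-diagonal and $(\Id+\Phi)^*$ preserves the bigrading by holomorphic-multivector arity; in particular it restricts to an isomorphism of the Kodaira-Spencer pieces $\Omega^{0,\bullet}_\phi(T^\phi_{1,0})\xrightarrow{\sim}\Omega^{0,\bullet}(T_{1,0})$. Restricting the differential $d_M+[\Phi,\cdot\,]_L$ to this subspace, $d_M$ restricts to the original Kodaira-Spencer differential, and since the restricted operator must land back in $\Omega^{0,\bullet}(T_{1,0})$ only the arity-preserving part of $[\Phi,\cdot\,]_L$ survives, namely $[\phi,\cdot\,]$ for $\phi\in\Omega^{0,1}(T_{1,0})$ the Kodaira-Spencer component of $\Phi$; this gives the deformed differential $\delbar'$ and unchanged bracket $[\,\cdot\,,\cdot\,]$ of the statement. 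The only genuinely delicate point in the whole argument is the cancellation producing $m_0=0$ — equivalently, the flatness of the transported $L_\infty$ structure — which is precisely the Maurer-Cartan property of $\Phi$; everything else is bookkeeping with gradings and Koszul signs.
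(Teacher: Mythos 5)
Your proof is correct and follows essentially the same route as the paper's: both transport the $BV_\infty$ torsor $(\mathcal{X}_L,\Omega,d)$ of the pair $(M^\phi,L)$ to $(\mathcal{X}_L,\Omega,e^{\pm\Phi}de^{\mp\Phi})$ via the Clifford exponential, read off the graded pieces of the conjugated differential (degree $-1$ unchanged, hence same bracket; degree $+1$ acquiring $[\Phi,d_{-1}]$, hence the deformed differential $d_M+[\Phi,-]_L$), kill the degree $+3$ piece using the Maurer--Cartan equation for $\Phi$, and conclude the last statement by checking that $\wedge(\Id+\Phi)^*$ preserves the double grading. The only differences are cosmetic: a sign convention in the exponential, and your added (correct) justification that $(\Id+\Phi)^*$ coincides with the canonical identification $\Omega_{M^\phi}\cong\mathcal{X}_L\cong\Omega_M$, which the paper states as a remark.
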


 \begin{proof}
 Note that the DGLA  $(\Omega_{M^\phi}[1], d_{M^\phi}, [~,~]_L)$  
 can be obtained by the derived bracket construction \eqref{eq:quantized_derived_brackets} from the $BV_\infty$ torsor
 $(\Omega_{M^\phi}\cong \mathcal{X}_L , \Omega, d)$. One checks that the map $e^{\Phi}:\Omega\to \Omega$ gives 
 an isomorphism of $BV_\infty$ torsors $(\Omega_{M^\phi}\cong \mathcal{X}_L , \Omega, d) \to 
 (\Omega_{M}\cong\mathcal{X}_L , \Omega, e^{\Phi}de^{-\Phi})$. 
We remark that the map $\wedge(\Id + \Phi)^*$ can alternatively be described as the composition of the canonical identifications 
$\Omega_{M^\phi}\cong \mathcal{X}_L \cong \Omega_M$. 
Let $d=d_{-1}+d_{+1}$ be the degree decomposition of $d$ with respect to the spinor grading \eqref{eq:spinor_grading}.
Note that $\Phi$ has degree $+2$, so the degree decomposition of $e^{\Phi}de^{-\Phi}$ is $d_{-1} + ([\Phi,d_{-1}] + d_{+1})$ 
(the term $\frac{1}{2}[\Phi,[\Phi,d_{-1}]]+[\Phi,d_{+1}]$ vanishes because $\Phi$ satisfies the MC equation \eqref{eq:MC_equation_for_Phi}). 
We see that the degree $-1$ component didn't change, which means it induces the same bracket on $\mathcal{X}_L$. 
The degree $+1$ component acquired the extra term $[\Phi,d_{-1}]$. This leads to the deformed differential $d_M + [\Phi,-]_L$.

For the final statement, one checks that the map $\wedge(\Id + \Phi)^*$ preserves the double grading 
$\Omega_M = \Omega^{0,\bullet}(\wedge^{\bullet}T_{1,0})$.
 \end{proof}

% For later use, let us state the following twin version of Lemma \ref{lm: KS_deformed_differential}.
% \begin{lemma}\label{lm: KS_deformed_differential_II}
% Let $\Psi:M\to L^\phi$ be the vector bundle map such that $M^\phi=graph\{\Psi\}$.
%  The vector bundle map $(\Id + \Psi):M\to M^\phi$ induces a DGLA isomorphism 
% \[
% (\Omega_{M^\phi}[1], d_{M^\phi}, [~,~]_{L^\phi}) \longrightarrow (\Omega_{M}[1], d_M + [\Psi,-]_{L^\phi}, [~,~]_{L^\phi}).
% \]
% \end{lemma}

\begin{theorem}\label{thm:exthml}
 Let $(X,I^\phi)$ be a small deformation of the complex manifold $(X,I)$.  
Then the Kodaira-Spencer DGLA 
$(\Omega_\phi^{0,\bullet}(T^\phi_{1,0}), \delbar_\phi, [~,~]_\phi)$ which controls deformations of the complex structure 
$I^\phi$ is $L_\infty$-isomorphic to the DGLA $(\Omega^{0,\bullet}(T_{1,0}), \delbar', [~,~])$. 
\end{theorem}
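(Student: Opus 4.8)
The plan is to realize the required $L_\infty$-isomorphism as a composition: first the change-of-transversal isomorphism furnished by Corollary~\ref{cor:independence_of_L_infty_from_the_choice_of_complement}, and then the isomorphism induced by $\Id+\Phi$ supplied by Lemma~\ref{lm: KS_deformed_differential}, with both maps cut down to the Kodaira--Spencer part of the relevant extended deformation complexes. To begin, recall that the deformed Kodaira--Spencer DGLA $(\Omega_\phi^{0,\bullet}(T^\phi_{1,0}),\delbar_\phi,[~,~]_\phi)$ is the natural sub-DGLA of the extended deformation complex $(\Omega_{M^\phi}[1],d_{M^\phi},[~,~]_{L^\phi})$ of the transverse pair of complex Dirac structures $(M^\phi,L^\phi)$ with $L^\phi=\overline{M^\phi}$. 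Since $\phi$ is small, the undeformed complement $L=T_{0,1}\oplus T^*_{1,0}$ is also transverse to $M^\phi$ — indeed $M^\phi$ is by construction the graph of $\Phi\colon M\to L$ — so Corollary~\ref{cor:independence_of_L_infty_from_the_choice_of_complement}, applied to $M^\phi$ with the two transversals $L^\phi$ and $L$, yields a canonical $L_\infty$-isomorphism
\[
e^{R_{\varepsilon'}}\colon(\Omega_{M^\phi}[1],d_{M^\phi},[~,~]_{L^\phi})\longrightarrow(\Omega_{M^\phi}[1],d_{M^\phi},[~,~]_{L}),
\]
where $\varepsilon'\in\mathcal X^2_{M^\phi}$ is the (necessarily skew) tensor with $L^\phi=e^{\varepsilon'}L$ — explicitly $\varepsilon'=(\Id+\Phi)\overline\Phi(\Id-\Phi\overline\Phi)^{-1}$, read as a map $L\to M^\phi$ — and $R_{\varepsilon'}=[\iota_{\varepsilon'},\mu]$ as in Theorem~\ref{Thm:derived_bracket_construction_preserves_gauge_action}.

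Next I would descend this isomorphism to the Kodaira--Spencer part. The key observation is that the Kodaira--Spencer DGLA sits inside $\Omega_{M^\phi}$ independently of which transversal is used to describe it: the subbundle $\mathrm{Ann}(M^\phi\cap T^*_{\mathbb C}X)=(T^\phi_{1,0})^*\subseteq(M^\phi)^*$ is canonical, hence so is the filtration $F^{\ge k}\wedge(M^\phi)^*$ by its powers, and for $n=\dim_{\mathbb C}X$ the Kodaira--Spencer DGLA is exactly the subquotient $F^{\ge n-1}/F^{\ge n}$ with its induced differential $\delbar_\phi$ and bracket — whether that bracket is inherited from $[~,~]_{L^\phi}$ or from $[~,~]_{L}$ (in the big complexes, $F^{\ge n-1}$ is a sub-DGLA with $F^{\ge n}$ as an ideal). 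Since the linear term of $e^{R_{\varepsilon'}}$ is the identity, it then suffices to check that $e^{R_{\varepsilon'}}$ preserves the subspaces $F^{\ge n-1}$ and $F^{\ge n}$; by Lemma~\ref{R_explicit} and its iterates the higher components of $e^{R_{\varepsilon'}}$ are built from repeated contraction with $\varepsilon'$ interspersed with wedge products, so this reduces to a direct check on the explicit shape of $\varepsilon'$. Granting this, $e^{R_{\varepsilon'}}$ induces an $L_\infty$-isomorphism between the two Kodaira--Spencer DGLAs, namely the one coming from the transversal $L^\phi$ — which is $(\Omega_\phi^{0,\bullet}(T^\phi_{1,0}),\delbar_\phi,[~,~]_\phi)$ — and the one coming from $L$.

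Finally I would invoke Lemma~\ref{lm: KS_deformed_differential}: the map $(\Id+\Phi)^*$ is a DGLA isomorphism $(\Omega_{M^\phi}[1],d_{M^\phi},[~,~]_L)\to(\Omega_M[1],d_M+[\Phi,-]_L,[~,~]_L)$ which is graded for the double gradings of \eqref{eq:spinor_grading} on source and target, so it carries $F^{\ge n-1}/F^{\ge n}$ to the corresponding subquotient of $\Omega_M$; invoking \eqref{eq:MC_equation_for_Phi} as in the proof of that lemma, the induced map on this subquotient is precisely the claimed DGLA $(\Omega^{0,\bullet}(T_{1,0}),\delbar',[~,~])$. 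Composing, $(\Id+\Phi)^*\circ e^{R_{\varepsilon'}}$ is the desired $L_\infty$-isomorphism.

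The step I expect to be the main obstacle is the descent carried out in the second paragraph: verifying that the change-of-transversal $L_\infty$-isomorphism $e^{R_{\varepsilon'}}$ is compatible with the canonical Kodaira--Spencer filtration of $\Omega_{M^\phi}$ and therefore passes to the subquotient. This is the only place where the explicit form of $\varepsilon'$ — equivalently, of $R_{\varepsilon'}=[\iota_{\varepsilon'},\mu]$ — relative to the subbundle $\mathrm{Ann}(M^\phi\cap T^*_{\mathbb C}X)$ must be used; once that compatibility is established, the theorem follows by simply assembling Corollary~\ref{cor:independence_of_L_infty_from_the_choice_of_complement} with Lemma~\ref{lm: KS_deformed_differential}.
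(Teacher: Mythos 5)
Your proposal follows the same route as the paper: apply Corollary~\ref{cor:independence_of_L_infty_from_the_choice_of_complement} to $M^\phi$ with the two transversals $L^\phi$ and $L$ (the paper parametrizes the change of transversal by $\overline{\Upsilon}:L^\phi\to M^\phi$ with $L=\mathrm{graph}(\overline{\Upsilon})$ rather than by your $\varepsilon'=\overline{\Psi}$ going the other way, which merely reverses the direction of the isomorphism), then use Lemma~\ref{lm: KS_deformed_differential} to transport to $\Omega_M$, and finally restrict everything to the Kodaira--Spencer part. The step you flag as the main obstacle is not one: the paper dispatches it in a single line by observing that $\overline{\Upsilon}\in\Omega^{1,0}_\phi(T^\phi_{0,1})$, so that contraction with it lowers the $\wedge^\bullet T^\phi_{1,0}$-degree and the form degree each by one, whence every component of $e^{R_{\overline{\Upsilon}}}$ (built from such contractions interleaved with wedge products, as in Lemma~\ref{R_explicit}) preserves the subspace $\Omega^{0,\bullet}_\phi(T^\phi_{1,0})$. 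Your only real misstep is the framing of that subspace: the Kodaira--Spencer complex is simply the $\wedge^1T^\phi_{1,0}$ graded piece of $\Omega_{M^\phi}\cong\Omega^{0,\bullet}_\phi(\wedge^\bullet T^\phi_{1,0})$, which is already a sub-DGLA (the Schouten bracket sends $\wedge^1\times\wedge^1\to\wedge^1$), and this bigrading is intrinsic to $M^\phi$, independent of the transversal; the subquotient $F^{\geq n-1}/F^{\geq n}$ of your filtration by powers of $\mathrm{Ann}(M^\phi\cap T^*_{\mathbb C}X)$ is not the Kodaira--Spencer complex but rather its twist by the canonical bundle, so that detour should be dropped. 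With the bidegree observation supplied and the sub-DGLA description corrected, your argument is the paper's proof.
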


\begin{proof}
Let us fix the map $\overline{\Upsilon}:L^\phi\to M^\phi$ such that
$L=graph\{\overline{\Upsilon}:L^\phi\to M^\phi\}$. 
In practice, $\overline{\Upsilon}$ can be computed as $\overline{\Upsilon}=(1+\overline{\Psi})^{-1}-1$, where $\overline{\Psi}:L\to M^\phi$ is given by
${\Psi}=(1+\overline{\Phi}){\Phi} (1 - \overline{\Phi} \Phi )^{-1}$.
Corollary \ref{cor:independence_of_L_infty_from_the_choice_of_complement} implies that $e^{R_{\overline{\Upsilon}}}$ gives an $L_\infty$-isomorphism between the 
DGLAs $(\Omega_{M^\phi}[1], d_{M^\phi} , [~,~]_{L^\phi})$ and $(\Omega_{M^\phi}[1], d_{M^\phi}, [~,~]_L)$. 
The latter one by Lemma~\ref{lm: KS_deformed_differential} is isomorphic as DGLA to $(\Omega_M[1], d_M + [\Phi,-]_L, [~,~]_L)$. 
It remains to note that since $\overline{\Upsilon}\in\Omega^{1,0}_\phi(T_{01}^\phi)$, the all the components of the $L_\infty$ map 
$e^{R_{\overline{\Upsilon}}}$ preserve 
$\Omega^{0,\bullet}_\phi(T_{10}^\phi)$.
\end{proof}

The theorem above establishes that even though the deformation complex \linebreak $(\Omega_\phi^{0,\bullet}(T^\phi_{1,0}), \delbar_\phi, [~,~]_\phi)$ of $I^\phi$ may not be isomorphic as a DGLA to the modified deformation complex $(\Omega^{0,\bullet}(T_{1,0}), \delbar', [~,~])$ of $I$, they are $L_\infty$--isomorphic.  Furthermore,  Lemma~\ref{lm: KS_deformed_differential} provides an isomorphism $\wedge(\Id + \Phi)^*$ of cochain complexes between these two DGLAs. Therefore, we may use this isomorphism to transport the Lie bracket $[~,~]_\phi$ to a Lie bracket $[~,~]'$ on the original Dolbeault complex, endowing it with two separate Lie algebra structures:
\begin{equation}
(\Omega^{0,\bullet}(T_{1,0}), \delbar', [~,~], [~,~]').
\end{equation}
Reiterating, the bracket $[~,~]$ is the Schouten-Nijenhuis bracket of the original complex manifold, while the bracket $[~,~]'$ is the corresponding bracket of the deformation $I^\phi$, transported by $\wedge(\Id + \Phi)^*$.  Each of these defines a DGLA structure on the same underlying differential complex. 

Of course, by construction and using Theorem~\ref{thm:exthml}, this pair of DGLA structures is $L_\infty$ isomorphic, via the conjugated equivalence 
\[
e^{R_{E}}=(1+\Phi)^*\circ e^{R_{\overline{\Upsilon}}} \circ ((1+\Phi)^*)^{-1},
\]
which may be solved for $E$, yielding 
$$
\begin{array}{llc}
E = &
\begin{pmatrix}
\varepsilon & & 0 \\
\\
0 & & - \varepsilon^* 
\end{pmatrix}
:&
\begin{matrix}
T_{1,0} & &T_{0,1} \\
\oplus & \longrightarrow &  \oplus\\
T_{0,1}^* & & T_{1,0}^*
\end{matrix}
\end{array}
$$
where  $\varepsilon = - \overline{\phi} (1-\phi \overline{\phi})^{-1} \in \Omega^{1,0}(T_{0,1})$.

%
%Combining the Theorem \ref{thm:exthml} and Lemma \ref{lm: KS_deformed_differential}, we get the commutative diagram of $L_\infty$-isomorphisms
%\begin{equation*}
%\xymatrix@C=1em@R=.7em{
%(\Omega_{M^\phi}[1], d_{M^\phi}, [~,~]_{L^\phi})\ar[rrr]^{\exp(R_{\overline{\Upsilon}})} \ar[dd]_{(1+\Phi)^*} & & & 
%(\Omega_{M^\phi}[1], d_{M^\phi}, [~,~]_{L}) \ar[dd]^{(1+\Phi)^*}\\
%\\
%(\Omega_{M}[1], d_M + [\Phi,-]_{L}, [~,~]')\ar[rrr]^{\exp({R_{E}})} & & & (\Omega_M[1], d_M + [\Phi,-]_L, [~,~]_L) 
%} 
%\end{equation*}
%where the vertical morphism are DGLA isomorphisms; the bracket $[-,-]'$ is obtained by transporting the bracket $[-,-]_{L^\phi}$ from the 
%top left DGLA; and the skew map $E:L\to M$ is defined so that $e^{R_{E}}=(1+\Phi)^*\circ e^{R_{\overline{\Upsilon}}} \circ ((1+\Phi)^*)^{-1}$.
%
%One can check that
%$E=(1+\Phi)^{-1}\overline{\Upsilon}(1+\overline{\Psi})=-\overline{\Phi}(1-\Phi \overline{\Phi})^{-1}:L\to M$. 
%Note that
%$$
%\begin{array}{llc}
%E = &
%\left[\begin{matrix}
%\varepsilon & & 0 \\
%\\
%0 & & - \varepsilon^* 
%\end{matrix}
%\right]
%:&
%\begin{matrix}
%T_{1,0} & &T_{0,1} \\
%\oplus & \longrightarrow &  \oplus\\
%T_{0,1}^* & & T_{1,0}^*
%\end{matrix}
%\end{array}
%$$
%where  $\varepsilon = - \overline{\phi} (1-\phi \overline{\phi})^{-1} \in \Omega^{10}(T_{01})$.

The significance of this observation is that it will allow us to explicitly describe the bracket $[~,~]'$,  usually inaccessible since it depends upon the deformed holomorphic structure, in terms of the original deformation complex of $I$.
To do this, we describe the family of maps $f_1,f_2,...$ giving the $L_\infty$ morphism 
\begin{equation}\label{eq:l_inf_map_btw_dglas}
e^{R_{E}}:(\Omega^{0,\bullet}(T_{10}), \delbar', [~,~]')
\longrightarrow
(\Omega^{0,\bullet}(T_{10}), \delbar', [~,~]).
\end{equation}
The first map $f_1$ is the identity, and the rest are determined by the following bilinear operation on elements $\alpha_1,\alpha_2 \in \Omega^{0,\bullet}(T_{10})$:
\[
R_{\varepsilon}(\alpha_1,\alpha_2) = i_{\varepsilon}(\alpha_1 \wedge \alpha_2)  - (i_{\varepsilon} \alpha_1) \wedge \alpha_2 - \alpha_1 \wedge i_{\varepsilon}\alpha_2.
\]
This operation extends to a multilinear operator with $n$ inputs and $n-1$ outputs:
\[
R_{\varepsilon}(\alpha_1,\ldots,\alpha_n) = \sum_{i<j}\epsilon(\sigma) R_{\varepsilon}(\alpha_i, \alpha_j) \otimes \alpha_1 \otimes \cdots \otimes \widehat{\alpha}_i \otimes \cdots \otimes \widehat{\alpha}_j \otimes \cdots \otimes \alpha_n.
\]
The $k^\text{th}$ map $f_k$ in the $L_\infty$ morphism is then given by the $(k-1)$-fold composition 

\[
f_k(\alpha_1,\ldots, \alpha_k) = \frac{1}{(k-1)!}R_{\varepsilon}^{k-1}(\alpha_1,\ldots, \alpha_k).
\]
For example, we have $f_2(\alpha_1,\alpha_2)=R_{\varepsilon}(\alpha_1,\alpha_2)$, and 
\[
\begin{array}{ll}
f_3(\alpha_1,\alpha_2,\alpha_3) = \frac{1}{2!}\Big(R_{\varepsilon}(R_{\varepsilon}(\alpha_1,\alpha_2),\alpha_3)+ 
(-1)^{(|\alpha_2|+1)(|\alpha_3|+1)}R_{\varepsilon}(R_{\varepsilon}(\alpha_1,\alpha_3),\alpha_2)+\\
(-1)^{(|\alpha_1|+1)(|\alpha_2|+|\alpha_3|)}R_{\varepsilon}(R_{\varepsilon}(\alpha_2,\alpha_3),\alpha_1)\Big).
\end{array}
\]
%(See Section \ref{section:preliminaries} for the construction of higher $f_n$'s.) 

\begin{corollary}\label{cor:L_infty_map_second_equation}
Let $D=[\delbar', \iota_\varepsilon]: \Omega^{0,\bullet}(T_{\bullet,0})\to \Omega^{0,\bullet}(T_{\bullet-1,0})$. 
Then for any $u,v\in \Omega^{0,\bullet}(T_{10})$ one has
\begin{equation}\label{eq:L_infty_map_second_equation}
 [u,v]' = [u,v] + (-1)^{|u|+1}(D(u\wedge v) - D(u) \wedge v) - u \wedge D(v).
\end{equation}
\end{corollary}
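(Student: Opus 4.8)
The plan is to read \eqref{eq:L_infty_map_second_equation} off the degree-two component of the $L_\infty$-morphism equations for the map $e^{R_E}$ of \eqref{eq:l_inf_map_btw_dglas}, whose components were recorded just above the statement: $f_1=\mathrm{Id}$, $f_2(\alpha_1,\alpha_2)=R_\varepsilon(\alpha_1,\alpha_2)$, and higher $f_k$ which do not enter the $n=2$ relation. Both DGLAs in \eqref{eq:l_inf_map_btw_dglas} are flat (since $M$ is integrable, $N_M=0$), so I may use the relation in the form \eqref{eq:L_inf_relation_2}, with source $V=(\Omega^{0,\bullet}(T_{1,0}),\delbar',[~,~]')$, target $W=(\Omega^{0,\bullet}(T_{1,0}),\delbar',[~,~])$, common differential $m_1^V=m_1^W=\delbar'$, and $m_2^V=[~,~]'$, $m_2^W=[~,~]$ (up to the sign convention of \eqref{eq:symm_to_skew}). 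Because $f_1=\mathrm{Id}$, this collapses to an identity of the schematic shape
\[
[u,v]-[u,v]'=\pm\bigl(\delbar'R_\varepsilon(u,v)-R_\varepsilon(\delbar'u,v)-(-1)^{(|u|+1)(|v|+1)}R_\varepsilon(\delbar'v,u)\bigr),
\]
with the precise Koszul signs dictated by \eqref{eq:L_inf_relation_2} together with the degree shifts implicit in $\Omega^{0,\bullet}(T_{1,0})[1]$.

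Next I would substitute the explicit formula $R_\varepsilon(\alpha_1,\alpha_2)=\iota_\varepsilon(\alpha_1\wedge\alpha_2)-(\iota_\varepsilon\alpha_1)\wedge\alpha_2-\alpha_1\wedge\iota_\varepsilon\alpha_2$ into the right-hand side and expand, using repeatedly that $\delbar'$ is a graded derivation of $\wedge$ (and that $\iota_\varepsilon$, being contraction with a tensor of even total degree $-2$, introduces no extra signs). The content of the calculation is purely formal: $R_\varepsilon$ is exactly the failure of $\iota_\varepsilon$ to be a derivation of $\wedge$, and feeding $\delbar'$ into this ``co-Leibniz defect'' — together with the two correction terms $R_\varepsilon(\delbar'u,v)$ and $R_\varepsilon(\delbar'v,u)$, which absorb the pieces where $\delbar'$ lands on an input — converts it into the co-Leibniz defect of the commutator $D=[\delbar',\iota_\varepsilon]$. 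After cancellation of all purely-wedge terms, the right-hand side becomes $\pm\bigl(D(u\wedge v)-D(u)\wedge v\bigr)\mp u\wedge D(v)$; recording that $\iota_\varepsilon$ has bidegree $(-1,-1)$, so that $D$ preserves Dolbeault degree, lowers polyvector degree by one, and has total degree $-1$ on the shifted complex, one fixes the surviving signs to be exactly those in \eqref{eq:L_infty_map_second_equation}.

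Finally I would observe that nothing leaves the Kodaira-Spencer subcomplex: since $\varepsilon\in\Omega^{1,0}(T_{0,1})$, contraction by $\varepsilon$ sends $\Omega^{0,q}(T_{p,0})$ into $\Omega^{0,q-1}(T_{p-1,0})$, so $R_\varepsilon$, $D$, and every intermediate expression preserve $\Omega^{0,\bullet}(T_{\bullet,0})$ — the same observation already used at the end of the proof of Theorem~\ref{thm:exthml}. I expect the only genuine difficulty to be sign bookkeeping, both in translating between the symmetric conventions of \eqref{eq:L_inf_relation_2} and the skew conventions in which $[~,~]$, $[~,~]'$, and $\delbar'$ are naturally written, and in tracking the two degree shifts in $\Omega^{0,\bullet}(T_{1,0})[1]$; the algebraic core — that applying $\delbar'$ to the co-Leibniz defect of $\iota_\varepsilon$ produces the co-Leibniz defect of $[\delbar',\iota_\varepsilon]$ — is a short and robust computation.
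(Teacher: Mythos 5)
Your proposal follows exactly the paper's route: the paper likewise reads the identity off the $n=2$ component \eqref{eq:L_inf_relation_2} of the $L_\infty$-morphism relations for $e^{R_E}$ (using $f_1=\mathrm{Id}$, $f_2=R_\varepsilon$, and the sign translation \eqref{eq:symm_to_skew}), obtaining $(-1)^{|u|+1}([u,v]'-[u,v])=\delbar'(f_2(u,v))-f_2(\delbar'u,v)-(-1)^{|u|+1}f_2(u,\delbar'v)$, and then expands the right-hand side using the explicit formula for $R_\varepsilon$ to produce the $D$-terms. Your extra remarks --- that the computation amounts to converting the co-Leibniz defect of $\iota_\varepsilon$ under $\delbar'$ into that of $D=[\delbar',\iota_\varepsilon]$, and that everything stays in the Kodaira--Spencer subcomplex --- are correct elaborations of steps the paper leaves implicit.
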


\begin{proof}
The fact that the family of maps $\{f_1=Id, f_2, f_3, ...\}$ form an $L_\infty$ morphism, implies, using \eqref{eq:L_inf_relation_2} and \eqref{eq:symm_to_skew}, that 
\begin{equation*}
(-1)^{|u|+1}([u,v]' -[u,v]) = \delbar'(f_2(u,v)) - f_2(\delbar'(u),v) - (-1)^{|u|+1}f_2(u,\delbar'(v)),
\end{equation*}
for $u,v\in \Omega^{0,\bullet}(T_{10})$. Expanding the right hand side gives \eqref{eq:L_infty_map_second_equation}.
\end{proof}

\begin{remark}
$D$ is a degree $-1$, $2^{nd}$ order differential operator on the algebra $\Omega^{0,\bullet}(T_{\bullet,0})$ which squares to zero, and so it defines a BV operator.  Instead of generating the bracket, as it would do in a BV algebra, it generates the difference between the two brackets in question.  In other words, we have obtained a Tian-Todorov type formula relating the Schouten-Nijenhuis bracket of the original complex structure $I$ to that of the deformed structure $I^\phi$.
%The RHS of \eqref{eq:L_infty_map_second_equation} can be interpreted as the principal symbol of $D$.
\end{remark}

Passing to cohomology, we obtain an easy corollary of the above result, implying for example that the quadratic obstruction maps for the two brackets coincide:

\begin{corollary}\label{cor:brackets_on_cohomology}
 The brackets $[-,-]$ and $[-,-]'$ induce the same Lie algebra on cohomology
$$
H^k_{\delbar'}(\Omega^{0,\bullet}(T_{10}))\times H^l_{\delbar'}(\Omega^{0,\bullet}(T_{10}))
 \longrightarrow H^{k+l}_{\delbar'}(\Omega^{0,\bullet}(T_{10}))
$$
\end{corollary}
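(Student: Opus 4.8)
The plan is to deduce the equality of the induced brackets directly from the explicit formula~\eqref{eq:L_infty_map_second_equation} of Corollary~\ref{cor:L_infty_map_second_equation}. First I would observe that $\delbar'$ is a derivation of both $[-,-]$ and $[-,-]'$, so each of them descends to a graded Lie bracket on $H^\bullet_{\delbar'}(\Omega^{0,\bullet}(T_{10}))$; it then remains only to show that these two induced brackets coincide, i.e.\ that $[u,v]'-[u,v]$ is $\delbar'$-exact whenever $u,v\in\Omega^{0,\bullet}(T_{10})$ are $\delbar'$-closed.

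The crucial step is the observation that the operator $D=[\delbar',\iota_\varepsilon]$ carries $\delbar'$-closed elements to $\delbar'$-exact ones: if $\delbar' w=0$, then $D(w)=[\delbar',\iota_\varepsilon]w$ reduces to $\delbar'(\iota_\varepsilon w)$. Given closed $u$ and $v$, the product $u\wedge v$ is again $\delbar'$-closed because $\delbar'$ is a derivation of the wedge product, so $D(u)$, $D(v)$ and $D(u\wedge v)$ are all $\delbar'$-exact. Substituting this into~\eqref{eq:L_infty_map_second_equation} and using the derivation property once more to absorb the remaining $\delbar'$-closed factors into $\delbar'$ — for instance $D(u)\wedge v=\delbar'(\iota_\varepsilon u)\wedge v=\pm\,\delbar'(\iota_\varepsilon u\wedge v)$ — one sees that every term of $[u,v]'-[u,v]$ is $\delbar'$-exact, which concludes the argument.

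Alternatively, and perhaps more transparently, I would argue from the $L_\infty$ morphism $e^{R_E}$ of~\eqref{eq:l_inf_map_btw_dglas} itself: its leading component is $f_1=\mathrm{Id}$, so the $n=2$ instance of the $L_\infty$ relations in the form~\eqref{eq:L_inf_relation_2} shows that $f_1$ descends to a Lie algebra homomorphism $(H_{\delbar'},[-,-]')\to(H_{\delbar'},[-,-])$; being the identity of the underlying cohomology, it forces $[-,-]'=[-,-]$ there. I do not expect any genuine obstacle in either route; the only thing requiring mild care is the Koszul-sign bookkeeping when translating between the symmetric brackets $m_n$ and the skew brackets $\ell_n$ via~\eqref{eq:symm_to_skew} and when commuting $\iota_\varepsilon$ past wedge factors, but such signs do not affect the exactness conclusion.
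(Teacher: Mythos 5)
Your first route is exactly the paper's proof: for $\delbar'$-closed $u,v$ one replaces $D=[\delbar',\iota_\varepsilon]$ by $\delbar'\iota_\varepsilon$ in Equation~\eqref{eq:L_infty_map_second_equation}, so that $[u,v]'-[u,v]$ collapses to the exact term $(-1)^{|u|+1}\delbar'(R_\varepsilon(u,v))$. Your alternative route via $f_1=\mathrm{Id}$ and the $n=2$ $L_\infty$ relation~\eqref{eq:L_inf_relation_2} is also correct, but it is not genuinely different, since Corollary~\ref{cor:L_infty_map_second_equation} is itself just that relation written out explicitly.
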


\begin{proof}
 If $u \in \Omega^{0,k}(T_{10}), v \in\Omega^{0,l}(T_{10})$ are $\delbar'$-closed, then one can replace $D$ with $\delbar' \iota_\varepsilon$ in
the right hand side of \eqref{eq:L_infty_map_second_equation}. Then it simplifies to $(-1)^{|u|+1}\delbar'(R_{\varepsilon}(u,v))$.
\end{proof}

Finally, applying Theorem \ref{mccomp} to the $L_\infty$ morphism 
\eqref{eq:l_inf_map_btw_dglas}, we obtain an explicit map taking 
Maurer-Cartan elements $\rho\in (\Omega^{0,\bullet}(T_{1,0}), \delbar', [-,-])$ to Maurer-Cartan elements for the DGLA $(\Omega^{0,\bullet}(T_{1,0}), \delbar', [-,-]')$.    For $\rho$ such that $(1+\rho\eps)$ is invertible, we obtain the Maurer-Cartan element 
\begin{equation}
B = (1 + \rho\eps)^{-1}\rho  = \rho -\rho\eps\rho + \rho\eps\rho\eps\rho - \cdots.
\end{equation}
%
%
%
%
%
%Every small deformation $(M^\phi)'$ of $M^\phi$ (as a complex structure) is given by an MC element
%$B\in (\Omega^{0,\bullet}(T_{1,0}), \delbar', [-,-]')$. To represent the same deformation in terms of the DGLA
%$(\Omega^{0,\bullet}(T_{1,0}), \delbar', [~,~])$, one needs to apply Theorem \ref{mccomp} to the $L_\infty$ morphism 
%\eqref{eq:l_inf_map_btw_dglas}. One obtains that the MC element
%$
%\rho = (e^{R_\varepsilon})_*B= B (1 - \varepsilon B)^{-1}.
%$

\end{document}